\documentclass{amsart}
\usepackage{amsmath}
\usepackage{mathrsfs}
\usepackage{amssymb}
\usepackage{mathtools}
\usepackage{amscd}
\usepackage[all]{xy}
\usepackage{color}
\usepackage{fancyhdr}
\usepackage{hyperref}

\theoremstyle{plain}
\newtheorem{theorem}{Theorem}[section]
\newtheorem{proposition}[theorem]{Proposition}
\newtheorem{lemma}[theorem]{Lemma}
\newtheorem{corollary}[theorem]{Corollary}
\newtheorem*{thm-a}{Theorem A}
\newtheorem*{thm-b}{Theorem B}
\newtheorem*{thm-c}{Theorem C}
\newtheorem*{thm-d}{Theorem D}

\newtheorem*{pro}{Proposition}

\theoremstyle{definition}
\newtheorem{definition}[theorem]{Definition}
\newtheorem{example}[theorem]{Example}

\newtheorem*{problem}{Problem}

\theoremstyle{remark}
\newtheorem{remark}[theorem]{Remark}

\def\A{\mathscr{A}}
\def\B{\mathscr{B}}
\def\C{\mathscr{C}}
\def\T{\mathscr{T}}
\def\F{\mathscr{F}}
\def\GV{{\rm GV}}
\def\Ext{{\rm Ext}}
\def\Hom{{\rm Hom}}
\def\Tor{{\rm Tor}}
\def\pd{{\rm pd}}

\def\iwsd{{\rm \hbox{$iw$-}sd}}
\def\wsd{{\rm \hbox{$w$-}sd}}

\def\m{{\rm \hbox{$R$-}Mod}}

\def\ker{{\rm Ker}~}

\def\cok{{\rm Coker}~}
\def\p{\mathfrak{p}}
\newcommand\bbar[1]{\mkern1.5mu\overline{\mkern-1.5mu{#1}\mkern-1.5mu}\mkern1.5mu}
\def\tor{{\rm tor_{\rm GV}}}
\def\wmax{w\mbox{-}{\rm Max}}

\begin{document}

\title[Cotorsion modules relative to $\tau_w$]{Cotorsion modules relative to a hereditary torsion theory}

\author[Qiao]{Lei Qiao}
\address{(Qiao) School of Mathematical Science, Sichuan Normal University, Chengdu 610066, P. R. China}
\email{lqiao@sicnu.edu.cn}

\thanks{Keywords: cotorsion module; hereditary torsion theory; $\GV$-torsion module; $w$-$\Tor$-pair; $w$-cotorsion pair; $w$-strongly flat module}

\thanks{$2020$ Mathematics Subject Classification: 13D07; 13D30; 13C13; 13G05}

\date{\today}

\begin{abstract}
	This paper investigates various classes of cotorsion modules relative to the hereditary torsion theory $\tau_w$ induced by the so-called $w$-operation. To achieve this, we first introduce the $\Tor$ and cotorsion pairs relative to $\tau_w$, which serve as our main tool. Additionally, we also introduce and study the strongly flat modules relative to $\tau_w$. As applications, several known rings (including perfect rings, almost perfect domains, Pr\"{u}fer $v$-multiplication domains, Dedekind domains, Krull domains, and DW-rings) are characterized in terms of these classes of modules.
\end{abstract}

\maketitle

\section*{Introduction and preliminaries}

Recall that an abelian group is called \textit{cotorsion} if every extension of it by a torsion-free group splits. The concept of cotorsion abelian groups was introduced independently by Harrison \cite{H59}, Nunke \cite{Nunke59} and Fuchs \cite{Fuchs60} in 1959/60. Subsequently Matlis \cite{Matlis64} and Warfield \cite{Warfield70} generalized this concept to modules over arbitrary commutative domains in two different ways. If $R$ is a commutative domain with the field of quotients $Q$, then an $R$-module $C$ is said to be \textit{Matlis-cotorsion} (or weakly cotorsion) if $\Ext^1_R(Q,C)=0$, and it is called \textit{Warfield-cotorsion} (or strongly cotorsion) if $\Ext^1_R(M,C)=0$ for every torsion-free $R$-module $M$. Later, in \cite{E84}, Enochs defined cotorsion modules over arbitrary associative rings. If $R$ is an associative ring with unit, then a module $C$ is called \textit{cotorsion} (or Enochs-cotorsion) if $\Ext^1_R(F,C)=0$ for any flat $R$-module $F$. Evidently, over commutative domains, 
\[\text{Warfield-cotorsion}\Rightarrow\text{Enochs-cotorsion}\Rightarrow\text{Matlis-cotorsion},\]
and examples show that in general the reverse implications fail. However, it is well known that a commutative domain $R$ is a Dedekind domain if and only if the three notions of Matlis-cotorsion, Enochs-cotorsion and Warfield-cotorsion modules coincide, and that the notions of Enochs-cotorsion and Warfield-cotorsion modules coincide if and only if $R$ is a Pr\"{u}fer domain (see, for example, \cite[Theorems 4.4.9 and 4.4.10]{GT06}). In \cite{BS02}, Bazzoni and 
Salce characterized the commutative domains $R$ for which the class of Matlis-cotorsion modules coincides with the class of Enochs-cotorsion modules. It was proved that they are exactly the \textit{almost perfect domains}, i.e., they have the property that $R/I$ is a perfect ring (in the sense of Bass \cite{Bass60}) for every nonzero ideal $I$ of $R$. 

It is well known that Dedekind domains play a crucial role in classical algebraic number theory, and their systematic study has furnished substantial impetus for the rapid advancement of commutative ring theory and ideal theory alike. Also, in the introduction to his book, \textit{Multiplicative Ideal Theory} \cite{Gilmer68}, Gilmer states: ``\textit{A third concept which plays a central role in the development of the classical ideal theory is that of a Pr\"{u}fer domain}". Although there are many ways to characterize Dedekind domains and Pr\"{u}fer domains, the cotorsion viewpoint yields elegant homological statements, and these statements precisely establish a connection between the classical ideals-based characterizations and the module-theoretic vanishing conditions. 

Recently, Geroldinger et al. \cite{GKL25} have posed 25 long-term open problems in multiplicative ideal theory and factorization theory; the 14th is as follows:

\begin{problem}(\cite[p. 17]{GKL25})
	Develop explicit cotorsion and Tor-cotorsion characterizations of rings and integral domains that arise in multiplicative ideal theory. In particular, investigate how homological invariants such as
	\[\Ext^1_R(-,-),~\Tor^R_1(-,-),~\text{global dimensions, etc.}\]
	can be used to classify such rings. Examples of interest include: Pr\"{u}fer domains, Krull domains, Pr\"{u}fer $v$-mulitiplication domains, etc.
\end{problem}

The main purpose of this paper is to study various classes of cotorsion modules relative to the hereditary torsion theory $\tau_w$ induced by the so-called $w$-operation, and to use these classes of modules to investigate the Problem.

The notion of cotorsion modules relative to torsion theories was first studied by Henderson and Orzech \cite{HO73}; they replaced the classical notion of ``torsion'' by a torsion theory and generalized some of the results of Matlis \cite{Matlis64} to certain torsion theories over commutative rings. If $R$ is an associative ring with unit and if $\sigma$ is a hereditary torsion theory on the category of unitary $R$-modules, then a left $R$-module $M$ is said to be \textit{$\sigma$-cotorsion} if $\Ext^1_R(Q_{\sigma}(R),M)=0$, where $Q_{\sigma}(R)$ is the ring of quotients of $R$ relative to $\sigma$. Fay \cite{Fay89}, in extending results of de la Rosa and Fuchs \cite{RF86} concerning dimension of torsion divisible modules over a domain, reinvestigated the approach of \cite{HO73} towards cotorsion, obtaining the extension of the duality theorem in Matlis \cite{Matlis64} to commutative rings by replacing the usual notion of torsion in a domain $R$ by a perfect torsion theory $T$ with $TR=0$. Fuchs \cite{Fuchs77} also defined cotorsion in terms of some torsion theory and obtained a structure theorem for cotorsion modules over Notherian hereditary rings analogous to those for abelian groups. Let $R$ be an associative ring with unit, and let $(\T,\F)$ be a hereditary torsion theory of unitary left $R$-modules. Then a left $R$-module $E$ is called \textit{$\T$-cotorsion} if $\Ext^1_R(G,E)=0$ for all $\T$-torsion-free modules $G$. As we see in \cite[p. 182, Definition]{E84}, the definition of Enochs-cotorsion modules agrees with that of Fuchs \cite{Fuchs77}. Almahdi et al. \cite{ATA18} have studied $w$-cotorsion modules (i.e., the elements of the right orthogonal complement of the class of all $w$-flat modules) over an arbitrary commutative ring $R$, where $w$ is the so-called $w$-operation and it can induce the hereditary torsion theory $\tau_w$ over $R$. They prove that the class of $w$-cotorsion modules is strictly contained in the class of Enochs-cotorsion modules and give a new homological characterization of Pr\"{u}fer $v$-multiplication domains in terms of $w$-cotorsion modules. Recently, for a multiplicative subset $S$ of a commutative ring $R$, two different versions of $S$-cotorsion modules were respectively studied by Assaad and Zhang \cite{AZ23} and Bennis and Bouziri \cite{BB25}. In general these two classes of $S$-cotorsion modules are both proper subclasses of the class of Enochs-cotorsion modules over commutative rings.

Next, we shall review some terminology related to the hereditary torsion theory $\tau_w$; see \cite{WK24} for details. The reader should consult the books of Stenstr\"{o}m \cite{S75} and Golan \cite{G86} for background in hereditary torsion theory.

Let $R$ be a commutative ring. Recall that an ideal $J$ of $R$ is called a
\textit{Glaz-Vasconcelos ideal} (a \textit{$\GV$-ideal} for short)
if $J$ is finitely generated and the natural homomorphism
$$\varphi:R\rightarrow J^*:=\Hom_R(J,R)$$ is an isomorphism.
It is clear that a finitely generated ideal $J$ of $R$ is a
$\GV$-ideal if and only if $$\mbox{$\Hom_R(R/J,R)=0$ and
	$\Ext^1_R(R/J,R)=0$.}$$ Notice that the set $\GV(R)$ of all $\GV$-ideals of
$R$ is a multiplicative system of ideals of $R$ (see \cite[Proposition 6.1.9]{WK24}). Let $M$ be an
$R$-module. Define
$$\tor(M):=\{x\in M~|~\mbox{$Jx=0$ for some $J\in\GV(R)$}\}.$$ Thus
$\tor(M)$ is a submodule of $M$. Now $M$ is said to be
\textit{$\GV$-torsion} (resp., \textit{$\GV$-torsion-free}) if
$\tor(M)=M$ (resp., $\tor(M)=0$). A $\GV$-torsion-free module $M$ is
called a \textit{$w$-module} if $\Ext^1_R(R/J,M)=0$ for all
$J\in\GV(R)$. Then it follows from \cite[Theorems 6.7.35 and 6.1.18]{WK24} that flat modules and reflexive modules are both
$w$-modules. Let $M$ be a $\GV$-torsion-free module. Then
$$M_w:=\{x\in E(M)~|~\mbox{$Jx\subseteq M$ for some $J\in\GV(R)$}\}$$
is a $w$-submodule of $E(M)$ containing $M$ and is called the
$w$-\textit{envelope} of $M$, where $E(M)$ denotes the injective
envelope of $M$. It is obvious that $M_w/M$ is the $\GV$-torsion submodule $\tor\left(E(M)/M\right)$ of $E(M)/M$, and that $M$ is a $w$-module if and only if $M_w=M$. Let $\wmax(R)$ denote the set of $w$-ideals of a commutative ring $R$ maximal among
proper integral $w$-ideals of $R$ and we call $\mathfrak{m}\in\wmax(R)$ a
\textit{maximal $w$-ideal} of $R$. Then every proper $w$-ideal is
contained in a maximal $w$-ideal and every maximal $w$-ideal is a
prime ideal. Note that an $R$-module $M$ is $\GV$-torsion if and only if $M_{\mathfrak{m}}=0$ for all $\mathfrak{m}\in\wmax(R)$ (see \cite[Theorem 6.2.13]{WK24}). Let $f:M\rightarrow N$ be a homomorphism of $R$-modules. Then we say that $f$ is a $w$-\textit{monomorphism} (resp., $w$-\textit{epimorphism}, $w$-\textit{isomorphism}) if
$f_\mathfrak{m}:M_\mathfrak{m}\rightarrow N_\mathfrak{m}$ is a monomorphism (resp., an epimorphism, an isomorphism) over $R_\mathfrak{m}$ for any maximal $w$-ideal $\mathfrak{m}$ of $R$. 

In the commutative domain case, $w$-modules were called \textit{semi-divisorial modules} by Glaz and Vasconcelos in \cite{GV77} and (in the ideal case) $F_\infty$-{\it ideals} by Hedstrom and Houston in \cite{HH80}, which have been proved to be useful in the study of ideal theory and module theory. It is worthwhile to point out that ``$w$'' may be a natural bridge between homological algebra and multiplicative ideal theory. On one hand, in the ideal theory, ``$w$'' is better known as the so-called $w$-\textit{operation}, which was introduced by Wang and McCasland \cite{WM97} in order to define the notion of strong Mori domains. Since then, a lot of work has been done on the $w$-operation. As we see in \cite[p. 451]{Zafrullah00}: ``\textit{Now what is so nice about the $w$-operation is that it is smoother than the $t$-operation in that $\cdots$. Smoothness of this sort can only mean one thing, that you can bring in a lot more homological algebra than with other star operations, if that is what you want.}'' For a detailed study of star operations the reader may consult Gilmer's book \cite{Gilmer92}. On the other hand, ``$w$" also can be used to induce the hereditary torsion theory $\tau_w$ (see \cite[p. 1618]{KLZ19}) such that the notion of $w$-modules coincides with that of $\tau_w$-closed (i.e., $\tau_w$-torsion-free and $\tau_w$-injective) modules, where the torsion theory $\tau_w$ whose torsion modules are the $\GV$-torsion modules and the torsion-free modules are the $\GV$-torsion-free modules. 

However, there appears to be no proof in the literature that $\tau_w$ is a hereditary torsion theory. In the following, we will provide one.

\begin{pro}
	Let $\tau_w$ be the pair $(\T_{\GV},\F_{\GV})$, where $\T_{\GV}$ (resp. $\F_{\GV}$) is the class of all $\GV$-torsion (resp. $\GV$-torsion-free) $R$-modules. Then $\tau_w$ is a hereditary torsion theory, and the corresponding Gabriel filter $\mathfrak{F}_{\tau_w}$ consists of the ideals containing a $\GV$-ideal.
\end{pro}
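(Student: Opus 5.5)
The plan is to go through Gabriel filters. First I show that $\mathfrak{F}:=\{I\subseteq R \mid I\supseteq J \text{ for some } J\in\GV(R)\}$ is a Gabriel (uniform, idempotent) filter of ideals. The standard correspondence (Stenstr\"{o}m) between Gabriel filters and hereditary torsion theories then gives a hereditary torsion theory whose torsion modules are the $M$ with $\mathrm{ann}(x)\in\mathfrak{F}$ for all $x\in M$. I then check that these torsion modules are exactly the $\GV$-torsion modules and the torsion-free ones exactly the $\GV$-torsion-free modules. That identification also gives $\mathfrak{F}_{\tau_w}=\mathfrak{F}$.

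The filter axioms will be checked in the following order.

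\textbf{Upward closure.} This is built into the definition of $\mathfrak{F}$.

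\textbf{Finite intersections.} Given $I_1\supseteq J_1$ and $I_2\supseteq J_2$ with $J_1,J_2\in\GV(R)$, I have $I_1\cap I_2\supseteq J_1J_2$. By \cite[Proposition 6.1.9]{WK24}, $J_1J_2\in\GV(R)$.

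\textbf{Uniformity.} Take $I\in\mathfrak{F}$ and $r\in R$. Then $(I:r)\supseteq I\supseteq J$, so $(I:r)\in\mathfrak{F}$. Commutativity makes this immediate.

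\textbf{Idempotence (Gabriel condition).} Suppose $I$ is an ideal and $J\in\mathfrak{F}$ satisfies $(I:a)\in\mathfrak{F}$ for every $a\in J$. I must show $I\in\mathfrak{F}$. First shrink $J$ to a $\GV$-ideal $J_0=(a_1,\dots,a_n)$; this uses that $\GV$-ideals are finitely generated. For each $i$ choose $J_i\in\GV(R)$ with $J_i\subseteq (I:a_i)$, so that $J_ia_i\subseteq I$. Then $K=J_1\cdots J_n\in\GV(R)$, again by multiplicativity, and $KJ_0\subseteq I$. The product $KJ_0$ is in $\GV(R)$, so $I\in\mathfrak{F}$. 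Finite generation is the key point that makes this go through.

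\textbf{Matching torsion modules.} Next I compare the torsion class $\{M \mid \mathrm{ann}(x)\in\mathfrak{F}\ \forall x\}$ with $\T_{\GV}$. The condition $\mathrm{ann}(x)\supseteq J$ for some $J\in\GV(R)$ is literally the condition $Jx=0$, so the torsion submodule for $\mathfrak{F}$ equals $\tor(M)$. Hence the torsion class is $\T_{\GV}$. The associated torsion-free class, namely the modules with zero torsion submodule, is then exactly $\F_{\GV}$.

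It remains to see that $\tau_w$ is a torsion theory in the orthogonality sense. By the Stenstr\"{o}m correspondence, this pair is a hereditary torsion theory, so that follows. If I want to avoid citing the correspondence, I can check directly that $\Hom(T,F)=0$ for $T\in\T_{\GV}$, $F\in\F_{\GV}$: if $Jx=0$ then $Jf(x)=0$, so $f(x)\in\tor(F)=0$. I also need that $M/\tor(M)$ is $\GV$-torsion-free. If $Jx\subseteq\tor(M)$ with $J=(a_1,\dots,a_n)$, pick $J_i$ with $J_ia_ix=0$. Then $J_1\cdots J_nJ\,x=0$, so $x\in\tor(M)$; this is the same finite-generation trick as in idempotence. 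Heredity amounts to closure of $\T_{\GV}$ under submodules, which is clear.

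I expect the only real obstacle to be the idempotence / $M/\tor(M)$ step. It hinges entirely on finite generation of $\GV$-ideals together with closure of $\GV(R)$ under products.
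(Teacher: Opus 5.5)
Your proof is correct, but it is organized differently from the paper's. The paper never touches the filter axioms. It checks directly that $\T_{\GV}\cap\F_{\GV}=0$, that $\T_{\GV}$ is closed under quotients and submodules, and that $\F_{\GV}$ is closed under submodules, the last two via $\tor(L)=L\cap\tor(M)$. It then shows that $0\to\tor(M)\to M\to M/\tor(M)\to 0$ is a torsion/torsion-free decomposition. Only afterwards does it read off $\mathfrak{F}_{\tau_w}=\{I : R/I\in\T_{\GV}\}$ as the ideals containing a $\GV$-ideal.

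Your primary route instead verifies that these ideals form a Gabriel filter and lets Stenstr\"om's bijection produce the hereditary torsion theory. This gives you several things for free: heredity, the fact that $\tor(M)$ is a submodule (from the intersection and colon axioms), and the equality $\mathfrak{F}_{\tau_w}=\mathfrak{F}$. The price is reliance on the full filter--torsion-theory correspondence. The paper needs only the elementary characterization of a torsion theory by closure properties plus the canonical exact sequence.

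The substantive step is the same in both. Your idempotence axiom and the paper's proof that $M/\tor(M)$ is $\GV$-torsion-free are one argument: finite generation of $\GV$-ideals together with closure of $\GV(R)$ under products. Your fallback ``direct check'' is essentially the paper's proof.
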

\begin{proof}
	First, note that for every $R$-submodule $L$ of an $R$-module $M$, we have $\tor(L)=L\cap\tor(M)$.
	
	For each $M\in\T_{\GV}\cap\F_{\GV}$, $M=\tor(M)=0$. So $\T_{\GV}\cap\F_{\GV}=\{0\}$.
	
	Let $M\in\T_{\GV}$ having a nonzero homomorphic image $N$ and let $f:M\rightarrow N$ be the canonical surjection. Then for each $y\in N$, $y=f(x)$ for some $x\in M$. Since $M$ is $\GV$-torsion, there is $J\in\GV(R)$ such that $Jx=0$, and so $Jy=f(Jx)=0$. Therefore, $N\in\T_{\GV}$. Thus, $\T_{\GV}$ is closed under taking homomorphic images.  
	
	Let $M\in\F_{\GV}$ and $L$ an $R$-submodule of $M$. Then $\tor(L)=L\cap\tor(M)=L\cap 0=0$, i.e., $L\in\F_{\GV}$. Hence, $\F_{\GV}$ is closed under taking submodules.
	
	For every $R$-module $M$, write $T=\tor(M)$ and $F=M/T$. Clearly, $T\in\T_{\GV}$. Let $J\cdot\bar{x}=0$, where $J\in\GV(R)$, $\bar{x}=x+T\in F$, and $x\in M$. Then $Jx\subseteq T$. Since $J$ is finitely generated and $\GV(R)$ is a multiplicative system of ideals of $R$, there exist $J'\in\GV(R)$ such that $J'Jx=0$. But $J'J\in\GV(R)$, so $x\in T$, i.e., $\bar{x}=0$. Consequently, $F\in\F_{\GV}$. Thus, it follows that $0\rightarrow T\rightarrow M\rightarrow F\rightarrow 0$ is an exact sequence with $T\in\T_{\GV}$ and $F\in\F_{\GV}$.
	
	Let $M\in\T_{\GV}$ and $L$ an $R$-submodule of $M$. Then $\tor(L)=L\cap\tor(M)=L\cap M=L$, i.e., $L\in\T_{\GV}$. Hence, $\T_{\GV}$ is also closed under taking submodules.
	
	Now, combining the above, we see that $\tau_w$ is a hereditary torsion theory. 
	
	Finally, if $I\in\mathfrak{F}_{\tau_w}$, then $R/I\in\T_{\GV}$, and so $J\cdot\bar{1}=0$ for some $J\in\GV(R)$, that is, $J\subseteq I$. Conversely, if $I$ is an ideal of $R$ containing a $\GV$-ideal $J$ of $R$, then $R/J\in\T_{\GV}$ and $R/I$ is a homological image of $R/J$. Hence, $R/I\in\T_{\GV}$. Thus, $\mathfrak{F}_{\tau_w}$ consists of the ideals containing a $\GV$-ideal.
\end{proof}

 In fact, one can further conclude that the hereditary torsion theory $\tau_w$ is well-centered (in the sense of Cahen \cite{Cahen73}). 

Now, we recall the concept of $w$-flat modules (i.e., flat modules relative to $\tau_w$), which appeared first in \cite{Wang97} when the ring is a commutative domain and was extended to arbitrary commutative rings in \cite{KW14}. Recall that an $R$-module $M$ is said to be \textit{$w$-flat} if the induced map $1\otimes f:M\bigotimes_R A\longrightarrow M\bigotimes_R B$ is a $w$-monomorphism for any $w$-monomorphism $f:A\longrightarrow B$. It is shown in \cite[Theorem 3.3]{KW14} that an $R$-module $M$ is $w$-flat if and only if $\Tor^R_1(M,N)$ is a $\GV$-torsion module for any $R$-module $N$, and if and only if $M_{\mathfrak{m}}$ is flat over $R_{\mathfrak{m}}$ for all $\mathfrak{m}\in\wmax(R)$. Certainly, all flat modules are $w$-flat. Moreover, all $\GV$-torsion modules are also $w$-flat, since a module is $\GV$-torsion if and only if its localizations at the maximal $w$-ideals vanish. The class of $w$-flat modules has been used to characterize von Neumann regular rings and Pr\"{u}fer $v$-multiplication domains (see \cite[Theorem 4.4]{WK14} and \cite[Theorem 3.5]{WQ15}).

Recall also from \cite{WK24} that a commutative ring $R$ is called a \textit{DW-ring} if every ideal of $R$ is a $w$-ideal. Equivalently, $R$ is a DW-ring if and only if every maximal ideal of $R$ is a $w$-ideal, if and only if $\GV(R)=\{R\}$. DW-domains were investigated first by Dobbs et al. \cite{DHLZ89,DHLRZ92}; they were mentioned in their papers as \textit{$t$-linkative domains}. In \cite{Mimouni05}, Mimouni was the first to name $t$-linkative domains as DW-domains. In the ideal theory, the terminology of DW-domains reflects the feature that the $d$-operation and the $w$-operation on such a ring are identical. From the homological algebra point of view, DW-rings are exactly the rings of the small finitistic dimensions at most one (see \cite[Corolllary 3.7]{ZW23}). Examples of DW-rings include zero-dimensional rings, rings of weak global dimension at most one, treed domains (in particular, one-dimensional domains), divisorial domains, etc.; see for example \cite[Theorem 6.2.8]{WK24}, \cite[Proposition 2.2]{TAB19}, and \cite[Corollary 2.6]{DHLZ89}. However, a $d$-dimensional Noetherian regular local ring ($d\geq 2$) is never a DW-ring. Indeed, if $(R,\mathfrak{m})$ is such a ring, then it is easy to see that $\mathfrak{m}$ is a $\GV$-ideal of $R$, and hence $R$ is not a DW-ring. 

Throughout this paper, all rings are commutative with an identity element and all modules are unitary; in particular, $R$ denotes such a ring. $\m$ stands for the category of all $R$-modules. $Q$ will denote the classical ring of quotients of $R$, and $K=Q/R$. Any undefined notions or notation are standard, as in \cite{GT06,S75,Rotman79,WK24,Gilmer92}.

Let us outline the structure of this paper. In Section 1, we first discuss the $\Tor$ and cotorsion pairs relative to $\tau_w$ (referred to as $w$-$\Tor$-pairs and $w$-cotorsion pairs, respectively; see Definitions \ref{the definition of w-Tor-pairs} and \ref{the definition of w-cotorsion-pairs}), which serve as our main tool. Proposition \ref{w-cotorsion pair} establishes that a $w$-$\Tor$-pair can simultaneously induce both a classical cotorsion pair and a $w$-cotorsion pair. Section 2 investigates two classes of Enochs-cotorsion modules relative to $\tau_w$: $w$-Enochs-cotorsion modules (Definition \ref{the definition of w-E-cotorsion}) and $\GV$-Enochs-cotorsion modules (Definition \ref{the definition of GV-E-cotorsion}). These classes are employed to characterize distinguished classes of rings, including perfect rings, von Neumann regular rings, and DW-rings. For example, Theorem \ref{perfect ring} states that a ring $R$ is perfect if and only if all $R$-modules are $\GV$-Enochs-cotorsion. In Section 3, we develop the concept of $w$-torsion-free modules (Definition \ref{the definition of w-torsion-free}) to investigate two types of Warfield-cotorsion modules relative to $\tau_w$: $w$-Warfield-cotorsion modules and $\GV$-Warfield-cotorsion modules (Definitions \ref{the definition of w-W-cotorsion} and \ref{the definition of GV-W-cotorsion}, respectively). This section provides several new characterizations of Pr\"{u}fer $v$-multiplication domains and DW-domains, as detailed in Theorem \ref{strongly GV-W-cotorsion=strongly GV-E-cotorsion} and Proposition \ref{w-W-cotrosion=W-cotorsion}, respectively. The final section focuses on $w$-strongly flat modules (Definition \ref{the definition of w-strongly flat}), i.e., strongly flat modules relative to $\tau_w$. We analyze these modules over domains, providing structural characterizations (Theorem \ref{characterization of  strongly w-flat}), localization properties (Proposition \ref{localization of strongly $w$-flat}), and relations to $\GV$-torsion modules (Proposition \ref{GV-torsion strongly w-flat}), etc. Applications include characterizations of Krull domains, Matlis domains, almost perfect domains, and Dedekind domains. For instance, Corollary \ref{a chracterization of Krull domians in terms of strongly $w$-flat modules} proves that a domain $R$ is a Krull domain if and only if every ideal of $R$ is $w$-strongly flat.

We summarize some of our main results in the following:

\begin{thm-a}{\rm (Theorem \ref{GV-E-torsion=w-E-torsion}, Propositions \ref{GV-W-torsion=w-W-torsion} and \ref{strongly w-flat=strongly flat})}
	The following statements are equivalent for a ring $R$.
	\begin{enumerate}
		\item $R$ is a DW-ring.
		\item The classes of all $\GV$-Enochs and $w$-Enochs cotorsion $R$-modules coincide.
		\item The classes of all $\GV$-Warfield and $w$-Warfield cotorsion $R$-modules coincide.
		\item The classes of all $w$-strongly and strongly flat $R$-modules coincide.
	\end{enumerate}
\end{thm-a}

\begin{thm-b}{\rm (Theorem \ref{strongly GV-W-cotorsion=strongly GV-E-cotorsion})}
	The following statements are equivalent for a domain $R$.
	\begin{enumerate}
		\item $R$ is a Pr\"{u}fer $v$-multiplication domain.
		\item The classes of all $w$-Enochs and $w$-Warfield cotorsion $R$-modules coincide.
		\item The classes of all $w$-torsion-free and $w$-flat $R$-modules coincide.
		\item The classes of all $\GV$-Enochs and $\GV$-Warfield cotorsion $R$-modules coincide.
	\end{enumerate}
\end{thm-b}

\begin{thm-c}{\rm (Theorem \ref{M-cotorsion=GV-E-cotorsion})}
	The following statements are equivalent for a domain $R$.
	\begin{enumerate}
		\item $R$ is almost perfect.
		\item The classes of all $w$-flat and $w$-strongly flat $R$-modules coincide.
		\item The classes of all Matlis and $\GV$-Enochs cotorsion $R$-modules coincide.
	\end{enumerate}
\end{thm-c}

\begin{thm-d}{\rm (Theorem \ref{dedekind domain})}
	The following statements are equivalent for a domain $R$.
	\begin{enumerate}
		\item $R$ is a Dedekind domain.
		\item The classes of all $\GV$-Warfield, $\GV$-Enochs and Matlis cotorsion $R$-modules coincide.
		\item The classes of all $w$-torsion-free, $w$-flat and $w$-strongly flat $R$-modules coincide.
	\end{enumerate}
\end{thm-d}

\section{Cotorsion and Tor pairs relative to $\tau_w$}

In 1966, Dickson \cite{Dickson66} introduced torsion theories for abelian categories by exploiting the $\Hom$-functor. Replacing formally the $\Hom$-functor by the $\Ext$-functor, cotorsion pairs (or cotorsion theories) were invented by Salce \cite{Salce79} in the category of abelian groups, and were rediscovered by Enochs et al. in the 1990’s.

For a class $\C$ of $R$-modules, define

\[\rule{0pt}{0pt}^{\bot}\C=\left\{M\in\m~|~\hbox{$\Ext_R^1(M,C)=0$ for all $C\in\C$}\right\},\]
\[\C^{\bot}=\left\{M\in\m~|~\hbox{$\Ext_R^1(C,M)=0$ for all $C\in\C$}\right\}.\]

Recall that a pair $(\A,\B)$ of $R$-modules is said to be a \textit{cotorsion pair} if both $\A=\rule{0pt}{0pt}^{\bot}\B$ and $\B=\A^{\bot}$ hold. Cotorsion pairs have been used to study covers and envelopes, particularly in the proof of the flat cover conjecture \cite{BBE01}. They have also been used in tilting theory and in the representation theory of Artin algebras.

$\Tor$-pairs were introduced as an analogue of cotorsion pairs, that is pairs of classes which are orthogonal relative to the $\Tor$-functor instead of the $\Ext$-functor. For a class $\C$ of $R$-modules, write

\[\C^{\top}=\left\{M\in\m~|~\hbox{$\Tor^R_1(M,C)=\Tor^R_1(C,M)=0$ for all $C\in\C$}\right\}.\]
A pair $(\A,\B)$ is called a \textit{$\Tor$-pair} if both $\A=\B^{\top}$ and $\B=\A^{\top}$.

For our purpose, we shall consider in this section the cotorsion and $\Tor$ pairs relative to the hereditary torsion theory $\tau_w$.

Let $\C$ be a class of $R$-modules. Then we define

\[\C^{\top_w}=\left\{M\in\m~|~\hbox{$\Tor^R_1(M,C)=\Tor^R_1(C,M)$ is $\GV$-torsion for all $C\in\C$}\right\},\]

\[\rule{0pt}{0pt}^{\bot_w}\C=\left\{M\in\m~|~\hbox{$\Ext_R^1(M,C)$ is $\GV$-torsion for all $C\in\C$}\right\},\]
and
\[\C^{\bot_w}=\left\{M\in\m~|~\hbox{$\Ext_R^1(C,M)$ is $\GV$-torsion for all $C\in\C$}\right\}.\]

\subsection{Tor-pairs relative to $\tau_w$}\quad

By replacing the vanishing of the $\Tor$-functor with the $\GV$-torsion of the $\Tor$-functor in the definition of $\Tor$-pairs, we introduce the following definition.

\begin{definition}\label{the definition of w-Tor-pairs}
	Let $\A$ and $\B$ be two classes of $R$-modules. The pair $(\A,\B)$ is called a \textit{$w$-$\Tor$-pair} if $\A=\B^{\top_w}$ and $\B=\A^{\top_w}$.
\end{definition}

It is worth noting that the notion of a $w$-$\Tor$-pair appeared first in \cite[Exercise 12.42]{WK24} under the name of a $w$-$\Tor$-torsion theory.

We use $\mathcal{FL}_w$ to denote the class of all $w$-flat $R$-modules.

\begin{example}\label{an example of a w-Tor-pair}
	$(\mathcal{FL}_w,\m)$ is a $w$-$\Tor$-pair.
\end{example}

Next, we discuss the relationship between the $w$-$\Tor$-pairs and the $\Tor$-pairs. 

\begin{proposition}\label{the relation between w-Tor-pair and Tor-pair}
	If $R$ is a DW-ring, then a pair $(\A,\B)$ is a $w$-$\Tor$-pair if and only if it is a $\Tor$-pair.
\end{proposition}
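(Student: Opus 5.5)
The plan is to show that over a DW-ring the two orthogonality operations $(-)^{\top_w}$ and $(-)^{\top}$ agree on every class of modules. Once we know $\C^{\top_w}=\C^{\top}$ for every class $\C$, the defining equalities $\A=\B^{\top_w}$, $\B=\A^{\top_w}$ turn literally into $\A=\B^{\top}$, $\B=\A^{\top}$, and conversely. So the proposition reduces to this agreement.

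The key step is to show that over a DW-ring the only $\GV$-torsion module is $0$. By the characterization recalled in the preliminaries, $R$ is a DW-ring if and only if $\GV(R)=\{R\}$. Take a $\GV$-torsion module $M$ and $x\in M$. Then $Jx=0$ for some $J\in\GV(R)$. This forces $J=R$, hence $x=1\cdot x=0$. Therefore $\tor(M)=0$ for every $R$-module $M$, and a module is $\GV$-torsion exactly when it is zero.

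Now apply this to $\Tor^R_1(M,C)$. For any class $\C$ and module $M$, the condition that $\Tor^R_1(M,C)$ and $\Tor^R_1(C,M)$ be $\GV$-torsion for all $C\in\C$ is equivalent to their vanishing. These two modules are isomorphic by commutativity of $R$, so the paper's phrasing of the definition causes no ambiguity. Hence $\C^{\top_w}=\C^{\top}$, and both implications of the proposition follow at once.

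There is no real obstacle. The only point needing care is to invoke the equivalence ``DW-ring $\iff \GV(R)=\{R\}$'' quoted earlier from \cite{WK24}, rather than the definition via $w$-ideals. Alternatively, one could argue through localizations: every maximal ideal is then a maximal $w$-ideal, so $M_{\mathfrak m}=0$ for all maximal $\mathfrak m$ gives $M=0$. I would use the $\GV(R)=\{R\}$ route as the more direct one.
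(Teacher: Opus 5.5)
Your proposal is correct and is essentially the argument the paper has in mind. The paper states this proposition without proof, and the intended reason is exactly yours: over a DW-ring $\GV(R)=\{R\}$, so the only $\GV$-torsion module is $0$. Hence $\C^{\top_w}=\C^{\top}$ for every class $\C$, and the two notions of pair coincide.
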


However, in general, a $\Tor$-pair is not necessarily a $w$-$\Tor$-pair, and a $w$-$\Tor$-pair is not necessarily a $\Tor$-pair, either.

\begin{remark}
	By \cite[Proposition 2.1]{TAB19}, it is not difficult to see that $(\mathcal{FL}_w,\m)$ is a $\Tor$-pair if and only if $\mathcal{FL}_w=\mathcal{FL}$, if and only if $(\mathcal{FL},\m)$ is a $w$-$\Tor$-pair, if and only if $R$ is a DW-ring, where $\mathcal{FL}$ denotes the class of all flat $R$-modules.
\end{remark}

Since $\Tor$ commutes with direct limits, the following proposition is a consequence of the fact that the class of all $\GV$-torsion $R$-modules is closed under direct limits.

\begin{proposition}\label{w-Tor pair and direct limits}
	If $(\A,\B)$ is a $w$-$\Tor$-pair, then both $\A$ and $\B$ are closed under direct limits.
\end{proposition}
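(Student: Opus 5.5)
Since $(\A,\B)$ is a $w$-$\Tor$-pair, we have $\A=\B^{\top_w}$ and $\B=\A^{\top_w}$. By symmetry it therefore suffices to show that for any class $\C$ of $R$-modules, the class $\C^{\top_w}$ is closed under direct limits. The plan has two ingredients: $\Tor$ commutes with direct limits, and the class of $\GV$-torsion modules is closed under direct limits.

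Let $\{M_i,\varphi_{ij}\}_{i\in I}$ be a direct system with each $M_i\in\C^{\top_w}$, and put $M=\varinjlim M_i$. Fix $C\in\C$. Because $R$ is commutative, $\Tor^R_1(M,C)\cong\Tor^R_1(C,M)$, so only one of them needs checking. Since $\Tor^R_1(-,C)$ commutes with direct limits (direct limits are exact and commute with tensor products, e.g.\ via a flat resolution of $C$), we get
\[\Tor^R_1(M,C)\cong\varinjlim\Tor^R_1(M_i,C).\]
Each $\Tor^R_1(M_i,C)$ is $\GV$-torsion by hypothesis.

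It remains to check that a direct limit of $\GV$-torsion modules is $\GV$-torsion. The direct limit $\varinjlim T_i$ is a homomorphic image of $\bigoplus T_i$. A direct sum of $\GV$-torsion modules is $\GV$-torsion: an element has finite support, and if $J_1,\dots,J_n\in\GV(R)$ annihilate its components, then the product $J_1\cdots J_n\in\GV(R)$ annihilates the element, because $\GV(R)$ is a multiplicative system. Moreover, $\T_{\GV}$ is closed under homomorphic images by the Proposition in the introduction. Alternatively, one can argue via localization: $(\varinjlim T_i)_\mathfrak{m}=\varinjlim (T_i)_\mathfrak{m}=0$ for every $\mathfrak{m}\in\wmax(R)$, using \cite[Theorem 6.2.13]{WK24}.

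Hence $\Tor^R_1(M,C)$ is $\GV$-torsion for every $C\in\C$, so $M\in\C^{\top_w}$. Applying this with $\C=\B$ and with $\C=\A$ gives the claim for both $\A$ and $\B$. No step is a real obstacle; the only point needing care is the closure of $\GV$-torsion modules under direct sums, which depends on the multiplicativity of $\GV(R)$.
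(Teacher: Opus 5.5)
Your proposal is correct and follows the paper's own argument: $\Tor$ commutes with direct limits, and the class of $\GV$-torsion modules is closed under direct limits. The paper only asserts the latter fact, and your justification of it (via direct sums and homomorphic images, using that $\GV(R)$ is a multiplicative system, or alternatively via localization at maximal $w$-ideals) is sound.
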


\begin{proposition}\label{w-Tor pair}
	Let $\mathcal{S}$ be a class of $R$-modules. Then
	\begin{enumerate}
		\item $\mathcal{S}\subseteq \left(\mathcal{S}^{\top_{w}}\right)^{\top_{w}}$;
		\item $\left( \left(\mathcal{S}^{\top_{w}}\right)^{\top_{w}}\right)^{\top_{w}}=\mathcal{S}^{\top_{w}}$;
		\item $\left(\mathcal{S}^{\top_{w}}, \left(\mathcal{S}^{\top_{w}}\right)^{\top_{w}}\right)$ is a $w$-$\Tor$-pair.
	\end{enumerate}
\end{proposition}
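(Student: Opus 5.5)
The statement is a Galois-connection fact. The operator $\C\mapsto\C^{\top_w}$ is symmetric and inclusion-reversing, so all three parts follow formally from two elementary properties, which I would isolate first. Note that since $R$ is commutative, $\Tor^R_1(M,C)\cong\Tor^R_1(C,M)$ canonically. Hence $M\in\C^{\top_w}$ just means that $\Tor^R_1(M,C)$ is $\GV$-torsion for every $C\in\C$.

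The first property is \emph{symmetry}: for any classes $\mathcal{X},\mathcal{Y}$, we have $\mathcal{X}\subseteq\mathcal{Y}^{\top_w}$ if and only if $\mathcal{Y}\subseteq\mathcal{X}^{\top_w}$. Both conditions say that $\Tor^R_1(X,Y)$ is $\GV$-torsion for all $X\in\mathcal{X}$ and $Y\in\mathcal{Y}$.

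The second property is \emph{antitonicity}: if $\mathcal{X}\subseteq\mathcal{Y}$, then $\mathcal{Y}^{\top_w}\subseteq\mathcal{X}^{\top_w}$. This holds because a module satisfying the $\GV$-torsion condition against every member of $\mathcal{Y}$ satisfies it in particular against every member of $\mathcal{X}$.

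For (1), apply symmetry with $\mathcal{X}=\mathcal{S}$ and $\mathcal{Y}=\mathcal{S}^{\top_w}$. The trivial inclusion $\mathcal{S}^{\top_w}\subseteq\mathcal{S}^{\top_w}$ gives $\mathcal{S}\subseteq(\mathcal{S}^{\top_w})^{\top_w}$.

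For (2), prove the two inclusions separately.
\begin{itemize}
\item Apply (1) to the class $\mathcal{S}^{\top_w}$ in place of $\mathcal{S}$. This gives $\mathcal{S}^{\top_w}\subseteq((\mathcal{S}^{\top_w})^{\top_w})^{\top_w}$.
\item Apply antitonicity to the inclusion $\mathcal{S}\subseteq(\mathcal{S}^{\top_w})^{\top_w}$ from (1). This gives $((\mathcal{S}^{\top_w})^{\top_w})^{\top_w}\subseteq\mathcal{S}^{\top_w}$.
\end{itemize}
Together these yield equality.

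For (3), set $\A=\mathcal{S}^{\top_w}$ and $\B=(\mathcal{S}^{\top_w})^{\top_w}$. The identity $\B=\A^{\top_w}$ holds by definition. The identity $\A=\B^{\top_w}$ is exactly (2). So $(\A,\B)$ satisfies Definition \ref{the definition of w-Tor-pairs}.

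There is no genuine obstacle. The only point needing care is the remark that the definition of $\C^{\top_w}$ involves both $\Tor^R_1(M,C)$ and $\Tor^R_1(C,M)$. Since these modules are isomorphic over a commutative ring, the condition is genuinely symmetric in $M$ and $C$, and that symmetry is what drives every step above.
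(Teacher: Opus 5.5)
Your proof is correct and is the standard Galois-connection argument (symmetry of $\Tor^R_1$ over a commutative ring, antitonicity of $(-)^{\top_w}$, then the usual triple-orthogonal identity). The paper omits the proof as ``straightforward,'' and this is exactly the routine verification it has in mind.
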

\begin{proof}
	The proof is straightforward.
\end{proof}

\subsection{Cotorsion pairs relative to $\tau_w$}\quad

By analogy with the $w$-$\Tor$-pair, we define the $w$-cotorsion pair as follows:

\begin{definition}\label{the definition of w-cotorsion-pairs}
	Let $\A$ and $\B$ be two classes of $R$-modules. Then the pair $(\A,\B)$ is called a \textrm{$w$-cotorsion pair}, if $\A=\rule{0pt}{10pt}^{\bot_w}\hspace{-2pt}\B$ and $\B=\A^{\bot_w}$.
\end{definition}

Before giving some examples of $w$-cotorsion pairs we recall some definitions. If $M$ is an $R$-module and $s\in R$, then let $\eta^M_s:M\longrightarrow M$ denote the multiplication map $m\longmapsto sm$, $\forall m\in M$.

\begin{definition}{\rm (\cite[p. 3416, Definition]{WQ20})}
	\begin{enumerate}
		\item A short exact sequence of $R$-modules
		$$0\rightarrow A\stackrel{f}{\rightarrow} B\stackrel{g}{\rightarrow} C\rightarrow 0$$
		is said to be \textit{$w$-split} if there exist $J=\langle
		d_1,\dots,d_n\rangle\in\GV(R)$ and $h_1,\dots,h_n\in\Hom_R(C,B)$
		such that $\eta_{d_k}^C=gh_k$ for all $k=1,\dots,n$.
		\item An $R$-module $M$ is said to be \textit{$w$-split} if
		there is a $w$-split short exact sequence of $R$-modules
		\[0\rightarrow L\rightarrow P\rightarrow M\rightarrow 0\] 
		with $P$ projective.
	\end{enumerate}
\end{definition}

It was shown in \cite[Proposition 2.4]{WQ20} that an $R$-module $M$ is $w$-split if and only if $\Ext^1_R(M,N)$ is $\GV$-torsion for all $R$-modules $N$. 

Recall also from \cite{Wu24} that an $R$-module $N$ is called \textit{$iw$-split} if there exists a $w$-split short exact sequence of $R$-modules
\[0\rightarrow N\rightarrow E\rightarrow C\rightarrow 0\]
with $E$ injective. It was proved in \cite[Theorem 2.3]{Wu24} that an $R$-module $N$ is $iw$-split if and only if $\Ext^1_R(M,N)$ is $\GV$-torsion for all $R$-modules $M$. 

The class of all $w$-split modules and all $iw$-split modules is denoted by $\mathcal{SP}_w$ and $\mathcal{SP}_{iw}$, respectively.

Next, we collect two preliminary lemmas for $w$-split exact sequences.

\begin{lemma}{\rm (\cite[Proposition 6.7.4]{WK24})}\label{w-split exact sequence}
	Assume that $\xi:0\rightarrow A\rightarrow B\rightarrow C\rightarrow 0$ is an exact sequence of $R$-modules. If $\Ext^1_R(C,A)$ is a $\GV$-torsion module, then $\xi$ is a $w$-split exact sequence.
\end{lemma}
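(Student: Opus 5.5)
The plan is to go through the Yoneda description of $\Ext^1$ and use the fact that each element of $\Ext^1_R(C,A)$ is annihilated by some $\GV$-ideal.

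Identify $\xi$ with its class $[\xi]\in\Ext^1_R(C,A)$, which is $\GV$-torsion by hypothesis. So there is $J\in\GV(R)$ with $J[\xi]=0$. As $J$ is finitely generated, write $J=\langle d_1,\dots,d_n\rangle$; then $d_k[\xi]=0$ for each $k$.

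Next I would interpret $d_k[\xi]$ concretely. The action of $s\in R$ on $\Ext^1_R(C,A)$ can be computed on either variable. I will use the second one, namely the class of the pullback of $\xi$ along $\eta^C_s:C\to C$. Then $d_k[\xi]=0$ says that the pullback sequence
\[0\to A\to B\times_C C\to C\to 0\]
along $\eta^C_{d_k}$ splits. Hence there is a section $C\to B\times_C C$, and composing it with the projection to $B$ gives $h_k\in\Hom_R(C,B)$ with $gh_k=\eta^C_{d_k}$. This is exactly the definition of $\xi$ being $w$-split, witnessed by $J$ and $h_1,\dots,h_n$.

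An equivalent route avoids pullbacks. Apply $\Hom_R(C,-)$ to $\xi$ to get the exact sequence
\[\Hom_R(C,B)\xrightarrow{g_*}\Hom_R(C,C)\xrightarrow{\delta}\Ext^1_R(C,A),\]
where $\delta(1_C)=[\xi]$. Since $\delta$ is $R$-linear, $\delta(\eta^C_{d_k})=d_k\delta(1_C)=0$. Exactness then gives $\eta^C_{d_k}=g h_k$ for some $h_k\in\Hom_R(C,B)$. I would present this version because it is cleaner.

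The only point needing care is the identification $\delta(1_C)=[\xi]$ together with the $R$-linearity of the connecting map. Both are standard facts for commutative $R$. So I expect no real obstacle: the main step is simply noticing that the connecting homomorphism turns torsion of $\Ext$ into the required maps $h_k$.
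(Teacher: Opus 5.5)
Your proof is correct. It is the standard argument behind the cited result \cite[Proposition 6.7.4]{WK24}; the paper gives no proof of its own, but it relies on the same mechanism in $(3)\Rightarrow(2)$ of Proposition~\ref{strongly w-flat ideals of integral domains}, where $\cok g_*$ embeds in $\Ext^1$ and is therefore $\GV$-torsion. You also do not need the identification $\delta(1_C)=[\xi]$ that you flag as delicate: it is enough that $\delta(1_C)$ lies in the $\GV$-torsion module $\Ext^1_R(C,A)$ and that $\delta$ is $R$-linear.
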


\begin{lemma}\label{localization of w-split exact sequence}
	Let $\xi:0\rightarrow A\xrightarrow[]{f} B\xrightarrow[]{g} C\rightarrow 0$ be a $w$-split exact sequence of $R$-modules. Then the exact sequence $\xi_{\p}:0\rightarrow A_{\p}\xrightarrow[]{f_{\p}} B_{\p}\xrightarrow[]{g_{\p}} C_{\p}\rightarrow 0$ is split over $R_{\p}$ for all prime $w$-ideals $\p$ of $R$.
\end{lemma}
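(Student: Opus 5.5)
The approach is to show that, after localizing at $\p$, the two relevant maps $g_{\p}h_{k,\p}$ become multiplications by elements $d_k/1$ generating the unit ideal of $R_{\p}$. A suitable $R_{\p}$-linear combination of the localized maps $h_{k,\p}$ will then be a section of $g_{\p}$.

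Since $\xi$ is $w$-split, fix $J=\langle d_1,\dots,d_n\rangle\in\GV(R)$ and $h_1,\dots,h_n\in\Hom_R(C,B)$ with $gh_k=\eta^C_{d_k}$ for each $k$. Localization is exact, so $\xi_{\p}$ is exact over $R_{\p}$. Moreover, $(h_k)_{\p}\in\Hom_{R_{\p}}(C_{\p},B_{\p})$ satisfies
\[
g_{\p}(h_k)_{\p}=(\eta^C_{d_k})_{\p}=\eta^{C_{\p}}_{d_k/1}.
\]

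The key point, and the only non-formal step, is that $J\not\subseteq\p$ for every prime $w$-ideal $\p$. Suppose instead $J\subseteq\p$. We have $R/J\in\T_{\GV}$, and $R/\p$ is a homomorphic image of $R/J$. Since $\T_{\GV}$ is closed under homomorphic images (as shown in the Proposition in the introduction), $R/\p$ is $\GV$-torsion.

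On the other hand, $\p$ is a $w$-ideal, so $R/\p$ is $\GV$-torsion-free. This is standard: if $J'x\subseteq\p$ with $J'\in\GV(R)$, then $x\in\p_w=\p$. Hence $R/\p$ is both $\GV$-torsion and $\GV$-torsion-free, so $R/\p\in\T_{\GV}\cap\F_{\GV}=\{0\}$. This forces $\p=R$, which is impossible for a prime ideal. Equivalently, $\p_{\p}$ is proper, whereas a $\GV$-ideal localizes to the unit ideal at a maximal $w$-ideal containing $\p$, by \cite[Theorem 6.2.13]{WK24} applied to $R/J$.

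Therefore some $d_k\notin\p$, say $d_1$, and $d_1/1$ is a unit in $R_{\p}$. Set
\[
s=(d_1/1)^{-1}(h_1)_{\p}:C_{\p}\to B_{\p}.
\]
Then $g_{\p}s=(d_1/1)^{-1}\eta^{C_{\p}}_{d_1/1}=1_{C_{\p}}$, so $g_{\p}$ is a split epimorphism and $\xi_{\p}$ splits over $R_{\p}$.

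I expect the obstacle to be only the verification that $J\not\subseteq\p$. The argument above settles it using only that $w$-ideals are $\GV$-torsion-free modulo themselves together with the torsion-theory facts already established.
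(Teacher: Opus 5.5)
Your proof is correct and follows the paper's argument. The paper picks an arbitrary $t\in J\setminus\p$, writes $t=\sum r_kd_k$, and uses $h=\frac{1}{t}\sum_k(r_kh_k)_{\p}$; you instead note that some generator $d_k$ already lies outside $\p$ and take $(d_k/1)^{-1}(h_k)_{\p}$, which is a harmless simplification, and your justification that $J\not\subseteq\p$ (which the paper only asserts) is also sound.
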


\begin{proof}
	Let $\p$ be a prime $w$-ideal of $R$. Since $\xi$ is $w$-split, there exist a $\GV$-ideal $J=(d_1,d_2,\cdots,d_n)$ and $h_k\in\Hom_R(C,B)$ such that $gh_k=d_k\cdot \textbf{1}_C$, $k=1,2,\cdots,n$. As $J\nsubseteq\p$, we can choose some $t\in J$ with $t\notin\p$. Write $t=r_1d_1+r_2d_2+\cdots+r_nd_n$, $r_k\in R$, $k=1,2,\cdots,n$, and set $h=\frac{1}{t}\cdot\sum\limits_{k=1}^n\left(r_k\cdot h_k\right)_{\p}$. Then $h\in\Hom_{R_{\p}}(C_{\p},B_{\p})$, and for each $\frac{x}{s}\in C_{\p}$, $x\in C$, $s\in R\backslash\p$ we have
	\[g_{\p}h\left(\frac{x}{s}\right)=\frac{1}{t}\cdot\frac{g\sum\limits_{k=1}^n\left(r_k\cdot h_k\right)(x)}{s}=\frac{\sum\limits_{k=1}^nr_kgh_k(x)}{ts}=\frac{\sum\limits_{k=1}^nr_kd_kx}{ts}=\frac{tx}{ts}=\frac{x}{s}.\]
	Hence, $g_{\p}h=\textbf{1}_{C_{\p}}$, and so $\xi_{\p}$ is split over $R_{\p}$.
\end{proof}

\begin{proposition}\label{localizations of w-split modules}
	Let $M$ be a $w$-split $R$-module. Then, for each prime $w$-ideal $\mathfrak{p}$ of $R$, $M_{\mathfrak{p}}$ is free over $R_{\mathfrak{p}}$. 
\end{proposition}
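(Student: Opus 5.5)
Since $M$ is $w$-split, there is a $w$-split short exact sequence
\[\xi:\ 0\rightarrow L\rightarrow P\rightarrow M\rightarrow 0\]
with $P$ projective. By Lemma~\ref{localization of w-split exact sequence}, for each prime $w$-ideal $\p$ of $R$ the localized sequence
\[\xi_{\p}:\ 0\rightarrow L_{\p}\rightarrow P_{\p}\rightarrow M_{\p}\rightarrow 0\]
splits over $R_{\p}$. Hence $M_{\p}$ is a direct summand of $P_{\p}$.

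Because $P$ is projective over $R$, the module $P_{\p}\cong P\otimes_R R_{\p}$ is projective over $R_{\p}$. Indeed, localization carries a direct summand of a free module $R^{(I)}$ to a direct summand of $R_{\p}^{(I)}$. It follows that $M_{\p}$, being a direct summand of $P_{\p}$, is a projective $R_{\p}$-module.

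Finally, $R_{\p}$ is a local ring, so Kaplansky's theorem applies: every projective module over a local ring is free, whether or not it is finitely generated. Therefore $M_{\p}$ is free over $R_{\p}$.

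The only nontrivial ingredient is this last step. It needs Kaplansky's theorem in full generality, since $M$ need not be finitely generated and Nakayama's lemma alone would not suffice. Everything else follows formally from Lemma~\ref{localization of w-split exact sequence} and the definition of a $w$-split module.
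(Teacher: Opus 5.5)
Your proof is correct and matches the paper's argument: you localize the $w$-split presentation $0\rightarrow L\rightarrow P\rightarrow M\rightarrow 0$ via Lemma~\ref{localization of w-split exact sequence}, which makes $M_{\p}$ a direct summand of the projective module $P_{\p}$ over the local ring $R_{\p}$. The paper relies on Kaplansky's theorem without naming it, when it calls $P_{\p}$ free and concludes that $M_{\p}$ is free; you state that step explicitly, which is the right clarification since $M$ need not be finitely generated.
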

\begin{proof}
	Let $\mathfrak{p}$ be any prime $w$-ideal of $R$. Since $M$ is $w$-split, there is a $w$-split exact sequence of $R$-modules $0\rightarrow L\rightarrow P\rightarrow M\rightarrow 0$ with $P$ projective. Therefore, by Lemma \ref{localization of w-split exact sequence}, we obtain a split exact sequence of $R_{\mathfrak{p}}$-modules $0\rightarrow L_{\mathfrak{p}}\rightarrow P_{\mathfrak{p}}\rightarrow M_{\mathfrak{p}}\rightarrow 0$ with $P_{\mathfrak{p}}$ a free $R_{\mathfrak{p}}$-module. Thus it follows that $M_{\mathfrak{p}}$ is also a free $R_{\mathfrak{p}}$-module.
\end{proof}

\begin{corollary}\label{w-split is w-flat}
	$\mathcal{SP}_w\subseteq\mathcal{FL}_w$, i.e., every $w$-split $R$-module is $w$-flat.
\end{corollary}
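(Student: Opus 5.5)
We deduce the corollary from Proposition \ref{localizations of w-split modules} together with the local characterization of $w$-flatness recalled in the introduction. By \cite[Theorem 3.3]{KW14}, an $R$-module $M$ is $w$-flat if and only if $M_{\mathfrak{m}}$ is flat over $R_{\mathfrak{m}}$ for every $\mathfrak{m}\in\wmax(R)$. So it suffices to verify this local condition for a $w$-split module $M$.

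Let $M\in\mathcal{SP}_w$ and fix $\mathfrak{m}\in\wmax(R)$. As recalled in the preliminaries, every maximal $w$-ideal is a prime ideal, and it is a $w$-ideal by definition. Hence $\mathfrak{m}$ is a prime $w$-ideal of $R$. Proposition \ref{localizations of w-split modules} then applies and gives that $M_{\mathfrak{m}}$ is a free $R_{\mathfrak{m}}$-module. Free modules are flat, so $M_{\mathfrak{m}}$ is flat over $R_{\mathfrak{m}}$. Since $\mathfrak{m}$ was arbitrary, $M$ is $w$-flat. This proves $\mathcal{SP}_w\subseteq\mathcal{FL}_w$.

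There is no real obstacle; the only point to check is that maximal $w$-ideals fall under the hypothesis ``prime $w$-ideal'' of Proposition \ref{localizations of w-split modules}, which holds by the facts just quoted. As a cross-check, there is a homological route. If $M$ is $w$-split, then $\Ext^1_R(M,N)$ is $\GV$-torsion for all $N$ by \cite[Proposition 2.4]{WQ20}. Taking $N$ to be a character module $N'^{+}=\Hom_{\mathbb{Z}}(N',\mathbb{Q}/\mathbb{Z})$ gives $\Tor^R_1(M,N')^{+}$ $\GV$-torsion. One would then need to descend $\GV$-torsionness from the character module back to $\Tor^R_1(M,N')$, which is more delicate. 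For this reason the localization argument above is preferred.
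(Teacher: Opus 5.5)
Your proof is correct and is essentially the paper's own argument: apply Proposition \ref{localizations of w-split modules} at each maximal $w$-ideal (a prime $w$-ideal, as you note) to get $M_{\mathfrak{m}}$ free, hence flat, and conclude by the local characterization of $w$-flatness from \cite[Theorem 3.3]{KW14}. The homological cross-check you sketch is not needed, and you rightly do not rely on it.
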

\begin{proof}
	This follows immediately from Proposition \ref{localizations of w-split modules} and the fact that an $R$-module $M$ is $w$-flat if and only if $M_{\mathfrak{m}}$ is a flat $R_{\mathfrak{m}}$-module for each maximal $w$-ideal $\mathfrak{m}$ of $R$.
\end{proof}

However, there exists an example of a $w$-flat module which is not $w$-split (\cite[Example 2.6]{WQ20}). It is then natural to ask the following: for which ring $R$ is it true that $\mathcal{SP}_w=\mathcal{FL}_w$? The answer to this question will be given in Theorem \ref{perfect ring}

\begin{example}\label{examples of w-cotorsion pairs}
	Both $(\mathcal{SP}_w,\m)$ and $(\m,\mathcal{SP}_{iw})$ are $w$-cotorsion pairs. 
\end{example}

It is easy to see the following.

\begin{proposition}\label{the relation between w-cotorsion pair and cotorsion pair}
	If $R$ is a DW-ring, then a pair $(\A,\B)$ is a $w$-cotorsion pair if and only if it is a cotorsion pair.
\end{proposition}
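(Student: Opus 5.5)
The key observation is that over a DW-ring the relative orthogonality operators coincide with the classical ones, so the two definitions of pair become literally the same pair of conditions. For a DW-ring we have $\GV(R)=\{R\}$, as recalled in the introduction. Hence for any $R$-module $M$,
\[\tor(M)=\{x\in M~|~Jx=0 \text{ for some } J\in\GV(R)\}=\{x\in M~|~Rx=0\}=0.\]
So an $R$-module is $\GV$-torsion if and only if it is zero. In other words, over a DW-ring the class $\T_{\GV}$ of the hereditary torsion theory $\tau_w$ consists only of the zero module.

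From this we get, for any class $\C$ of $R$-modules, the equalities
\[\rule{0pt}{0pt}^{\bot_w}\C=\rule{0pt}{0pt}^{\bot}\C \quad\text{and}\quad \C^{\bot_w}=\C^{\bot}.\]
The point is that "$\Ext^1_R(M,C)$ is $\GV$-torsion" reads verbatim as "$\Ext^1_R(M,C)=0$", and similarly for $\Ext^1_R(C,M)$.

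Now take a pair $(\A,\B)$. The defining conditions of a $w$-cotorsion pair (Definition \ref{the definition of w-cotorsion-pairs}) are $\A=\rule{0pt}{10pt}^{\bot_w}\hspace{-2pt}\B$ and $\B=\A^{\bot_w}$. By the equalities above these become exactly $\A=\rule{0pt}{0pt}^{\bot}\B$ and $\B=\A^{\bot}$, which is the definition of a cotorsion pair. This gives both implications simultaneously.

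There is no real obstacle here. The only point needing care is that $\GV(R)=\{R\}$ is precisely the DW-ring characterization quoted from \cite{WK24}, and that the unit ideal annihilating $x$ forces $x=0$. The same argument also proves Proposition \ref{the relation between w-Tor-pair and Tor-pair}, with $\top_w$ in place of $\bot_w$.
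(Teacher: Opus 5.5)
Your proof is correct and is the same argument the paper has in mind; the paper states the proposition with only ``It is easy to see the following'' and gives no written proof. Since $\GV(R)=\{R\}$ over a DW-ring, the only $\GV$-torsion module is $0$, so ${}^{\bot_w}\C={}^{\bot}\C$ and $\C^{\bot_w}=\C^{\bot}$, and the two definitions coincide word for word.
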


However, in general, a cotorsion pair is not necessarily a $w$-cotorsion pair, and a $w$-cotorsion pair is not necessarily a cotorsion pair, either.

\begin{remark}\quad
	\begin{enumerate}
		\item By using \cite[Theorem 2.7]{AA21}, it is easily seen that $(\mathcal{SP}_w,\m)$ is a cotorsion pair if and only if $\mathcal{SP}_w=\mathcal{P}_0$, if and only if $(\mathcal{P}_0,\m)$ is a $w$-cotorsion pair, if and only if $R$ is a DW-ring, where $\mathcal{P}_0$ denotes the class of all projective $R$-modules.
		\item It was proved in \cite[Theorem 2.17]{Wu24} that $R$ is a DW-ring if and only if every $iw$-split $R$-module is injective. Thus, we conclude that $(\m,\mathcal{SP}_{iw})$ is a cotorsion pair if and only if $\mathcal{SP}_{iw}=\mathcal{I}_0$, if and only if $(\m,\mathcal{I}_0)$ is a $w$-cotorsion pair, if and only if $R$ is a DW-ring, where $\mathcal{I}_0$ denotes the class of all injective $R$-modules.
	\end{enumerate}
\end{remark}

We next prove that a $w$-$\Tor$-pair induces both a cotorsion pair and a $w$-cotorsion pair.

\begin{proposition}\label{w-cotorsion pair}
	Let $\mathcal{S}$ be a class of $R$-modules. Then
	\begin{enumerate}
		\item $\mathcal{S}\subseteq\rule{0pt}{10pt}^{\bot_{w}}\left(\mathcal{S}^{\bot_{w}}\right)\bigcap\left(\rule{0pt}{10pt}^{\bot_{w}}\mathcal{S}\right)^{\bot_{w}}$.
		\item $\left(\rule{0pt}{10pt}^{\bot_{w}}\left(\mathcal{S}^{\bot_{w}}\right)\right)^{\bot_{w}}=\mathcal{S}^{\bot_{w}}$, and so $\left(\rule{0pt}{10pt}^{\bot_{w}}\left(\mathcal{S}^{\bot_{w}}\right),\mathcal{S}^{\bot_{w}}\right)$ is a $w$-cotorsion pair, called the $w$-cotorsion pair {\rm generated} by the class $\mathcal{S}$.
		\item $\rule{0pt}{20pt}^{\bot_{w}}\left(\left(\rule{0pt}{10pt}^{\bot_{w}}\mathcal{S}\right)^{\bot_{w}}\right)=\rule{0pt}{10pt}^{\bot_{w}}\mathcal{S}$, and hence $\left(\rule{0pt}{10pt}^{\bot_{w}}\mathcal{S},\left(\rule{0pt}{10pt}^{\bot_{w}}\mathcal{S}\right)^{\bot_{w}}\right)$ is a $w$-cotorsion pair, called the $w$-cotorsion pair {\rm cogenerated} by the class $\mathcal{S}$.
	\end{enumerate}
\end{proposition}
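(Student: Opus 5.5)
This is a purely formal Galois-connection argument. The two operators $\mathcal{C}\mapsto{}^{\bot_w}\mathcal{C}$ and $\mathcal{C}\mapsto\mathcal{C}^{\bot_w}$ are both inclusion-reversing, and they are adjoint in the sense that for classes $\mathcal{X},\mathcal{Y}$ of $R$-modules
\[\mathcal{X}\subseteq{}^{\bot_w}\mathcal{Y}\iff \Ext^1_R(X,Y)\ \text{is $\GV$-torsion for all }X\in\mathcal{X},\,Y\in\mathcal{Y}\iff \mathcal{Y}\subseteq\mathcal{X}^{\bot_w}.\]
Both facts are immediate from the definitions. Only the symmetric condition ``$\Ext^1_R(X,Y)\in\T_{\GV}$'' enters, so no property of $\GV$-torsion modules is needed beyond the definitions. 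In particular, we do not need closure of $\T_{\GV}$ under submodules or extensions.

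For (1), fix $S\in\mathcal{S}$. For every $M\in\mathcal{S}^{\bot_w}$ the module $\Ext^1_R(S,M)$ is $\GV$-torsion by definition, so $S\in{}^{\bot_w}(\mathcal{S}^{\bot_w})$. Similarly, for every $N\in{}^{\bot_w}\mathcal{S}$ the module $\Ext^1_R(N,S)$ is $\GV$-torsion, so $S\in({}^{\bot_w}\mathcal{S})^{\bot_w}$. Equivalently, apply the adjunction with $\mathcal{X}=\mathcal{S}$, $\mathcal{Y}=\mathcal{S}^{\bot_w}$ and with $\mathcal{X}={}^{\bot_w}\mathcal{S}$, $\mathcal{Y}=\mathcal{S}$.

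For (2), write $\mathcal{B}=\mathcal{S}^{\bot_w}$ and $\mathcal{A}={}^{\bot_w}\mathcal{B}$. By (1), $\mathcal{S}\subseteq\mathcal{A}$, and antitonicity gives $\mathcal{A}^{\bot_w}\subseteq\mathcal{S}^{\bot_w}=\mathcal{B}$. Conversely, (1) applied to the class $\mathcal{B}$ gives $\mathcal{B}\subseteq({}^{\bot_w}\mathcal{B})^{\bot_w}=\mathcal{A}^{\bot_w}$. Hence $\mathcal{A}^{\bot_w}=\mathcal{B}$. Since $\mathcal{A}={}^{\bot_w}\mathcal{B}$ holds by definition, Definition \ref{the definition of w-cotorsion-pairs} shows that $(\mathcal{A},\mathcal{B})$ is a $w$-cotorsion pair.

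For (3), argue dually. Set $\mathcal{A}={}^{\bot_w}\mathcal{S}$ and $\mathcal{B}=\mathcal{A}^{\bot_w}$. By (1), $\mathcal{S}\subseteq\mathcal{B}$, so ${}^{\bot_w}\mathcal{B}\subseteq{}^{\bot_w}\mathcal{S}=\mathcal{A}$. Applying (1) to $\mathcal{A}$ gives $\mathcal{A}\subseteq{}^{\bot_w}(\mathcal{A}^{\bot_w})={}^{\bot_w}\mathcal{B}$. Thus ${}^{\bot_w}\mathcal{B}=\mathcal{A}$ and $\mathcal{B}=\mathcal{A}^{\bot_w}$, so $(\mathcal{A},\mathcal{B})$ is a $w$-cotorsion pair.

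There is no real obstacle here. The only point requiring care is keeping track of which argument of $\Ext^1_R$ each class sits in, so that (1) is invoked for the correct class ($\mathcal{S}$, $\mathcal{B}$, or $\mathcal{A}$) at each step.
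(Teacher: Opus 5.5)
Your proof is correct and is the standard Galois-connection verification that the paper leaves as a ``routine exercise.'' Part (1), together with the fact that both $\mathcal{C}\mapsto{}^{\bot_w}\mathcal{C}$ and $\mathcal{C}\mapsto\mathcal{C}^{\bot_w}$ reverse inclusions, is all that is needed; as you note, no property of $\GV$-torsion modules enters.
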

\begin{proof}
	The proof is a routine exercise. 
\end{proof}

Put $E_0:=\prod E(R/\mathfrak{m})$ with the product taken over all maximal $w$-ideals $\mathfrak{m}$ of $R$. Then since $R/\mathfrak{m}$ is $\GV$-torsion-free and the torsion theory $\tau_w$ is hereditary, $E_0$ is a $\GV$-torsion-free and injective $R$-module. For any $R$-module $M$, we write $M^{\dag}:=\Hom_R(M,E_0)$. Notice that $M^{\dag}$ is $\GV$-torsion-free for every $R$-module $M$. In fact, for each $J\in\GV(R)$,
\[\Hom_R(R/J,M^{\dag})\cong\left(M/JM\right)^{\dag}=0,\]
because $E_0$ is $\GV$-torsion-free.

\begin{lemma}\label{GV-torsion module}
	Let $M$ be an $R$-module. Then $M$ is a $\GV$-torsion module if and only if $M^{\dag}=0$.
\end{lemma}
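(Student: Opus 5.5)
The plan is to prove both implications directly, using that a module is $\GV$-torsion if and only if $M_{\mathfrak{m}}=0$ for every $\mathfrak{m}\in\wmax(R)$, together with the injectivity and $\GV$-torsion-freeness of $E_0$.

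($\Rightarrow$) Suppose $M$ is $\GV$-torsion and let $f\in M^{\dag}=\Hom_R(M,E_0)$. Since $\T_{\GV}$ is closed under homomorphic images (by the Proposition in the introduction), $f(M)$ is a $\GV$-torsion submodule of $E_0$. Hence $f(M)\subseteq\tor(E_0)=0$, because $E_0$ is $\GV$-torsion-free. Therefore $f=0$, and so $M^{\dag}=0$.

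($\Leftarrow$) Suppose $M$ is not $\GV$-torsion. The goal is to build a nonzero map $M\to E_0$.

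\begin{enumerate}
\item Since $M$ is not $\GV$-torsion, there is some $\mathfrak{m}\in\wmax(R)$ with $M_{\mathfrak{m}}\neq 0$.
\item Choose $x\in M$ with $x/1\neq 0$ in $M_{\mathfrak{m}}$. Then $I:=\mathrm{ann}_R(x)\subseteq\mathfrak{m}$, because an element $s\notin\mathfrak{m}$ with $sx=0$ would force $x/1=0$.
\item Consider $Rx\cong R/I$. The composite $R/I\twoheadrightarrow R/\mathfrak{m}\hookrightarrow E(R/\mathfrak{m})$ is a nonzero homomorphism $g\colon Rx\to E(R/\mathfrak{m})$.
\item By injectivity of $E(R/\mathfrak{m})$, $g$ extends to a map $\tilde g\colon M\to E(R/\mathfrak{m})$, which is nonzero since $\tilde g(x)=g(x)\neq0$.
\item Composing $\tilde g$ with the canonical inclusion of the factor $E(R/\mathfrak{m})$ into the product $E_0$ gives a nonzero element of $M^{\dag}$, contradicting $M^{\dag}=0$.
\end{enumerate}

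The only step needing care is the choice of $x$ with $\mathrm{ann}_R(x)\subseteq\mathfrak{m}$. This is exactly where the localization criterion for $\GV$-torsion modules from \cite[Theorem 6.2.13]{WK24} is used. Every other step is formal.
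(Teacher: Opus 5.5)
Your proof is correct and follows essentially the same route as the paper's. The forward direction is the paper's ``clear'' necessity. For the converse you reduce to a single factor $E(R/\mathfrak{m})$ of $E_0$, exactly as the paper does. The only difference is that you prove inline, via $\mathrm{ann}_R(x)\subseteq\mathfrak{m}$ and injectivity, the implication $\Hom_R(M,E(R/\mathfrak{m}))=0\Rightarrow M_{\mathfrak{m}}=0$, which the paper instead cites from Fuchs--Salce.
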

\begin{proof}
	The necessity is clear. Conversely suppose that $M^{\dag}=0$. Then
	\[\prod\Hom_R(M,E(R/\mathfrak{m}))\cong M^{\dag}=0,\]
	and so $\Hom_R(M,E(R/\mathfrak{m}))=0$ for all $\mathfrak{m}\in\wmax(R)$. Thus, it follows from \cite[Chapter IX, Lemma 5.7]{FS01} that $M_{\mathfrak{m}}=0$ for all $\mathfrak{m}\in\wmax(R)$, i.e., $M$ is a $\GV$-torsion module.
\end{proof}

For the definition of complete cotorsion pairs, see \cite[Lemma 2.2.6]{GT06}. For the definition of closed cotorsion pairs and that of perfect cotorsion pairs, we refer to \cite[Definition 2.3.1]{GT06}.

\begin{proposition}\label{w-tor and cotor}
	Let $(\A,\B)$ be a $w$-$\Tor$-pair and $\C=\{B^\dag~|~B\in\B\}$. Then the following hold.
	\begin{enumerate}
		\item $\C\subseteq \A^\bot\subseteq \A^{\bot_w}$.
		\item $\A=\rule{0pt}{10pt}^{\bot}(\A^\bot)=\rule{0pt}{0pt}^{\bot}\C$. 
		\item $(\A,\A^{\bot})$ is a complete and closed cotorsion pair, and hence it is perfect.
		\item $\A=\rule{0pt}{10pt}^{\bot_w}(\A^{\bot_w})=\rule{0pt}{0pt}^{\bot_w}\C$, and so $(\A,\A^{\bot_w})$ is a $w$-cotorsion pair.
	\end{enumerate}
\end{proposition}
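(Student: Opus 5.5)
The whole argument rests on the standard duality isomorphism
\[\Ext^1_R(M,N^\dag)\cong\Tor^R_1(M,N)^\dag,\]
valid for all $R$-modules $M,N$ because $E_0$ is injective. Combined with Lemma \ref{GV-torsion module}, it turns "$\Tor^R_1(M,N)$ is $\GV$-torsion" into "$\Ext^1_R(M,N^\dag)=0$".

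For (1), take $A\in\A$ and $B\in\B$. Since $(\A,\B)$ is a $w$-$\Tor$-pair, $\Tor^R_1(A,B)$ is $\GV$-torsion, so Lemma \ref{GV-torsion module} gives $\Tor^R_1(A,B)^\dag=0$. Hence $\Ext^1_R(A,B^\dag)=0$, and $\C\subseteq\A^\bot$. The inclusion $\A^\bot\subseteq\A^{\bot_w}$ is trivial.

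For (2), (1) gives $\A\subseteq{}^\bot(\A^\bot)\subseteq{}^\bot\C$. Conversely, let $M\in{}^\bot\C$. For every $B\in\B$ we get $\Tor^R_1(M,B)^\dag\cong\Ext^1_R(M,B^\dag)=0$, so $\Tor^R_1(M,B)$ is $\GV$-torsion by Lemma \ref{GV-torsion module}. Since $\Tor$ is symmetric over a commutative ring, this says $M\in\B^{\top_w}=\A$.

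For (4), the same computation works with $\bot_w$. First, $\A\subseteq{}^{\bot_w}(\A^{\bot_w})$ always holds. By (1), $\C\subseteq\A^{\bot_w}$, so ${}^{\bot_w}(\A^{\bot_w})\subseteq{}^{\bot_w}\C$. Now let $M\in{}^{\bot_w}\C$. Each $B^\dag$ is $\GV$-torsion-free, so $\Ext^1_R(M,B^\dag)\cong\Tor^R_1(M,B)^\dag$ is $\GV$-torsion-free as well. Being also $\GV$-torsion, it vanishes. As in (2), this forces $M\in\A$. It then follows formally that $\A^{\bot_w}$ is the right half, and $(\A,\A^{\bot_w})$ is a $w$-cotorsion pair.

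For (3), (2) shows that $(\A,\A^\bot)$ is the cotorsion pair cogenerated by the class $\C$. The main obstacle is completeness, because $\C$ need not be a set. My plan is to deduce completeness from the fact that $\A$ is closed under direct limits (Proposition \ref{w-Tor pair and direct limits}). A class of the form ${}^\bot\C$ that is closed under direct limits and contains the projectives is deconstructible: one shows it is closed under pure submodules and quotients via the $\Tor$ description, and then uses the standard Bican--El Bashir--Enochs style argument from \cite{GT06}, which yields a set $\mathcal S$ with $\A$ consisting of the $\mathcal S$-filtered modules (up to summands). It then follows that $(\A,\A^\bot)$ is cogenerated by the set $\mathcal S$, so it is complete by Eklof--Trlifaj.

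Verifying the pure-closure step is the delicate part. Let $0\to P\to M\to M/P\to 0$ be pure exact with $M\in\A$. Since Pontryagin-type arguments with $\Tor$ preserve pure exactness, we get exact sequences
\[\Tor^R_2(M/P,B)\to\Tor^R_1(P,B)\to\Tor^R_1(M,B)\]
and
\[0\to\Tor^R_1(P,B)\to\Tor^R_1(M,B)\to\Tor^R_1(M/P,B)\to 0.\]
The second sequence uses purity, which makes $P\otimes B\to M\otimes B$ injective. Since the class of $\GV$-torsion modules is closed under submodules and quotients, $P$ and $M/P$ lie in $\A$. This gives the deconstructibility hypothesis.

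Finally, closedness, i.e.\ that $\A$ is closed under direct limits, is Proposition \ref{w-Tor pair and direct limits}. A complete cotorsion pair whose left class is closed under direct limits is perfect by \cite[Theorem 2.3.8]{GT06}.
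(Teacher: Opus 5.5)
Your arguments for (1), (2) and (4) are the same as the paper's. For (3) you take a genuinely different route, and it is correct.

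\textbf{The paper's route for (3).} The paper observes that every $B^\dag=\Hom_R(B,E_0)$ is pure-injective, because $E_0$ is injective \cite[Lemma 2.1]{E84}. So $(\A,\A^\bot)$ is the cotorsion pair cogenerated by a class of pure-injective modules, and \cite[Theorem 3.2.9]{GT06} gives completeness and closedness at once. The issue that $\C$ need not be a set never has to be faced explicitly.

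\textbf{Your route for (3).} You deconstruct $\A$ by hand:
\begin{enumerate}
\item closure under pure submodules and pure epimorphic images, read off directly from $\A=\B^{\top_w}$;
\item the Bican--El Bashir--Enochs filtration by small pure submodules;
\item Eklof's lemma, giving $\A=\mathrm{Filt}(\mathcal S)$ and hence $\A^\bot=\mathcal S^\bot$ for a set $\mathcal S$;
\item Eklof--Trlifaj for completeness;
\item Proposition~\ref{w-Tor pair and direct limits} together with \cite[Theorem 2.3.8]{GT06} for closedness and perfectness.
\end{enumerate}
This essentially unpacks what lies behind the cited theorem. It replaces pure-injectivity of $\C$ by the $\Tor$ description of $\A$, and it yields the extra information that the pair is generated by a set. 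The paper's route is much shorter.

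\textbf{Points to tighten.}
\begin{itemize}
\item Injectivity of $\Tor^R_1(P,B)\to\Tor^R_1(M,B)$ does not follow from injectivity of $P\otimes_R B\to M\otimes_R B$; that only gives surjectivity onto $\Tor^R_1(M/P,B)$. The left end holds because a pure exact sequence stays exact after tensoring with the syzygy $K$ in $0\to K\to F\to B\to 0$, with $F$ free. The snake lemma then gives $0\to\Tor^R_1(P,B)\to\Tor^R_1(M,B)\to\Tor^R_1(M/P,B)\to P\otimes_R B\to\cdots$. Alternatively, use that pure exact sequences are direct limits of split ones. Your first displayed sequence plays no role.
\item Your general slogan, that any ${}^\bot\C$ closed under direct limits and containing the projectives is deconstructible, is more than you need and more than you justify. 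The filtration argument only uses closure under pure submodules and pure quotients, plus closure under transfinite extensions, which Eklof's lemma supplies for ${}^\bot\C$. Direct limits enter only for closedness and perfectness.
\item In (4), $\Ext^1_R(M,B^\dag)$ is $\GV$-torsion-free because it is isomorphic to $\Tor^R_1(M,B)^\dag$, a module of the form $X^\dag$. It does not follow from $B^\dag$ being $\GV$-torsion-free.
\end{itemize}
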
  
\begin{proof} (1) Obviously, $\A^\bot\subseteq \A^{\bot_w}$. Let $B^\dag\in\C$ with $B\in\B$. Then for each $A\in\A$, $\Tor^R_1(A,B)$ is a $\GV$-torsion module. From this, we have
		\[\Ext^1_R(A,B^\dag)\cong\Tor^R_1(A,B)^\dag=0,\]
which yields $B^\dag\in \A^\bot$. Thus $\C\subseteq \A^\bot$.
	
(2) By (1),
		\[\A\subseteq\rule{0pt}{10pt}^{\bot}(\A^{\bot})\subseteq\rule{0pt}{0pt}^{\bot}\C.\]
Conversely, let $A\in\rule{0pt}{0pt}^{\bot}\C$. Then for any $B\in\B$, we have $B^\dag\in\C$ and
		\[\Tor^R_1(A,B)^\dag\cong\Ext^1_R(A,B^\dag)=0,\]
and consequently $\Tor^R_1(A,B)$ is $\GV$-torsion by Lemma \ref{GV-torsion module}. Hence $A\in\B^{\top_w}=\A$, and so $\rule{0pt}{0pt}^{\bot}\C\subseteq\A$. 
	
(3) By (2), $(\A,\A^{\bot})$ is a cotorsion pair cogenerated by the class $\C$. Since by \cite[Lemma 2.1]{E84} every element of $\C$ is pure injective, it follows from \cite[Theorem 3.2.9]{GT06} that $(\A,\A^{\bot})$ is complete, closed, and hence perfect.
	
(4) First note that
	\[\A\subseteq\rule{0pt}{10pt}^{\bot_w}(\A^{\bot_w})\subseteq\rule{0pt}{0pt}^{\bot_w}\C.\]
Conversely, let $A\in\rule{0pt}{0pt}^{\bot_w}\C$. Then for each $B\in\B$, we have that $B^\dag\in\C$ and \[\Tor^R_1(A,B)^\dag\cong\Ext^1_R(A,B^\dag)\]
is $\GV$-torsion. But $\Tor^R_1(A,B)^\dag$ is also a $\GV$-torsion-free module, hence we obtain $\Tor^R_1(A,B)^\dag=0$. Consequently, by Lemma \ref{GV-torsion module}, $\Tor^R_1(A,B)$ is a $\GV$-torsion module, which implies $A\in\B^{\top_w}=\A$. Thus $\rule{0pt}{0pt}^{\bot_w}\C\subseteq\A$, and the first part of (4) holds. Finally, by Proposition \ref{w-cotorsion pair}(3), $(\A,\A^{\bot_w})$ is a $w$-cotorsion pair cogenerated by the class $\C$.
\end{proof}

\begin{proposition}\label{hereditary w-cotorsion pair}
	Let $\mathcal{C}_{w}=(\A,\B)$ be a $w$-cotorsion pair. Then the following statements are equivalent.
	\begin{enumerate}
		\item $\A$ is resolving.
		\item $\B$ is coresolving.
		\item $\Ext^i_R(A,B)$ is $\GV$-torsion for all $i\geqslant 1$, $A\in\A,B\in\B$.
	\end{enumerate}
	In this case, the $w$-cotorsion pair $\mathcal{C}_w$ is called {\rm hereditary}.
\end{proposition}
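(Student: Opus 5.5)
We follow the classical proof for hereditary cotorsion pairs (as in \cite[Lemma 5.24]{GT06}), replacing ``$=0$'' by ``is $\GV$-torsion'' throughout. Here \emph{resolving} means that $\A$ contains the projective modules, is closed under extensions, and is closed under kernels of epimorphisms between its members; \emph{coresolving} is the dual notion for $\B$. The facts about $\GV$-torsion modules we need are that the class $\T_{\GV}$ is closed under submodules, quotients and extensions; this follows from the Proposition in the introduction, since $\tau_w$ is a hereditary torsion theory. We also note that for a $w$-cotorsion pair the classes $\A$ and $\B$ are automatically closed under extensions, and that $\A$ contains all projectives while $\B$ contains all injectives. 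Both facts come from the long exact $\Ext$ sequence together with closure of $\T_{\GV}$ under extensions.

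\textbf{(3)$\Rightarrow$(1) and (3)$\Rightarrow$(2).} Let $0\to A'\to A\to A''\to 0$ be exact with $A,A''\in\A$, and let $B\in\B$. The long exact sequence gives
\[\Ext^1_R(A,B)\to\Ext^1_R(A',B)\to\Ext^2_R(A'',B).\]
Both outer terms are $\GV$-torsion, so the middle term is an extension of a submodule of a $\GV$-torsion module by a quotient of one, hence $\GV$-torsion. Thus $A'\in{}^{\bot_w}\B=\A$, so $\A$ is resolving. The dual argument, using $\Ext^1_R(A,B')$ between $\Ext^1_R(A,B)$ and $\Ext^2_R(A,B'')$, shows that $\B$ is coresolving.

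\textbf{(1)$\Rightarrow$(3).} We argue by dimension shifting in the first variable. Take $A\in\A$ and a short exact sequence $0\to K\to P\to A\to 0$ with $P$ projective. Since $P\in\A$ and $\A$ is resolving, $K\in\A$. For $i\ge 1$ we have $\Ext^{i+1}_R(A,B)\cong\Ext^i_R(K,B)$, so induction on $i$ gives that $\Ext^i_R(A,B)$ is $\GV$-torsion for all $A\in\A$, $B\in\B$ and $i\ge1$.

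\textbf{(2)$\Rightarrow$(3).} This is dual: embed $B$ in an injective module $E$. Then $E\in\B$, the cokernel $C$ lies in $\B$ because $\B$ is coresolving, and $\Ext^{i+1}_R(A,B)\cong\Ext^i_R(A,C)$.

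The main point needing care is exactly the verification that projectives lie in $\A$ and injectives lie in $\B$, together with the closure of $\T_{\GV}$ under extensions. Both are immediate, since $\Ext^1_R(P,-)=0=\Ext^1_R(-,E)$ for $P$ projective and $E$ injective. The rest is routine.
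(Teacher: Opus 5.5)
Your proof is correct and takes the same approach as the paper, which just says the argument is the same as the classical one in \cite[Lemma 2.2.10]{GT06}, with ``$=0$'' replaced by ``is $\GV$-torsion''. The only additional inputs are the closure of $\GV$-torsion modules under submodules, quotients and extensions, and the inclusions $\mathcal{P}_0\subseteq\A$ and $\mathcal{I}_0\subseteq\B$, and you identify and verify exactly these.
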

\begin{proof}
	The proof of this lemma is similar to that of \cite[Lemma 2.2.10]{GT06}.
\end{proof}

\section{Enochs-cotorsion modules relative to $\tau_w$}

In this section, we shall consider two types of Enochs-cotorsion modules relative to $\tau_w$.

\subsection{$w$-Enochs-cotorsion modules}\quad

In \cite{ATA18}, Almahdi et al. introduced the concept of $w$-Enochs-cotorsion modules. 

\begin{definition}{\rm (\cite[Definition 2.1]{ATA18})}\label{the definition of w-E-cotorsion}
	Let $M$ be an $R$-module. Then $M$ is called a \textit{$w$-Enochs-cotorsion module}, if $\Ext^1_R(F,M)=0$ for any $w$-flat $R$-module $F$, that is, if $M\in{\mathcal{FL}_w}^\bot$.
\end{definition} 

Denote by $\mathcal{EC}_w$ the class of all $w$-Enochs-cotorsion $R$-modules. Then $\mathcal{EC}_w={\mathcal{FL}_w}^\bot$. 

Notice that $(\mathcal{FL}_w,\m)$ is a $w$-$\Tor$-pair, Proposition \ref{w-tor and cotor}(3) gives the following result.

\begin{proposition}{\rm (\cite[Theorem 3.5]{Z19})}\label{w-E-cotorsion pair}
	$(\mathcal{FL}_w,\mathcal{EC}_w)$ is a hereditary and perfect cotorsion pair. Hence, every $R$-module has a $w$-flat cover.
\end{proposition}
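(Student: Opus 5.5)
Since $(\mathcal{FL}_w,\m)$ is a $w$-$\Tor$-pair (Example \ref{an example of a w-Tor-pair}), Proposition \ref{w-tor and cotor}(3), applied with $\A=\mathcal{FL}_w$, shows directly that $(\mathcal{FL}_w,\mathcal{FL}_w^{\bot})=(\mathcal{FL}_w,\mathcal{EC}_w)$ is a complete, closed and hence perfect cotorsion pair. What remains is to show the pair is hereditary and to deduce the existence of $w$-flat covers.

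For heredity I will show that $\mathcal{FL}_w$ is resolving; for a cotorsion pair this is equivalent to heredity by \cite[Lemma 2.2.10]{GT06}. Projective modules are flat and therefore $w$-flat. Closure under extensions follows from the long exact $\Tor$ sequence: if $0\to A\to B\to C\to 0$ is exact with $A,C$ $w$-flat, then for every $N$ we get an exact sequence $\Tor_1^R(A,N)\to\Tor_1^R(B,N)\to\Tor_1^R(C,N)$ with outer terms $\GV$-torsion. Because $\tau_w$ is hereditary, the class of $\GV$-torsion modules is closed under submodules, quotients and extensions, so the middle term is $\GV$-torsion as well.

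The one step that needs a little care is closure under kernels of epimorphisms. Suppose $0\to A\to B\to C\to 0$ is exact with $B,C$ $w$-flat. I will use the local criterion from \cite[Theorem 3.3]{KW14}. For each $\mathfrak{m}\in\wmax(R)$ the localized sequence $0\to A_{\mathfrak{m}}\to B_{\mathfrak{m}}\to C_{\mathfrak{m}}\to 0$ is exact over $R_{\mathfrak{m}}$, with $B_{\mathfrak{m}}$ and $C_{\mathfrak{m}}$ flat. Since flat modules over any ring are closed under kernels of epimorphisms, $A_{\mathfrak{m}}$ is flat for all such $\mathfrak{m}$, and so $A$ is $w$-flat.

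Alternatively, heredity can be checked directly. For $M\in\mathcal{EC}_w$ and $F\in\mathcal{FL}_w$, dimension shifting along a projective presentation of $F$ reduces $\Ext^i_R(F,M)$ to $\Ext^1$ of a syzygy of $F$, which is $w$-flat by the kernel-closure just proved, so $\Ext^i_R(F,M)=0$ for all $i\geqslant 1$. Finally, since the cotorsion pair is perfect, every $R$-module has an $\mathcal{FL}_w$-cover, that is, a $w$-flat cover.
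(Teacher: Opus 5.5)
Your proof is correct and takes the paper's route: the paper likewise gets completeness, closedness and perfectness by applying Proposition \ref{w-tor and cotor}(3) to the $w$-$\Tor$-pair $(\mathcal{FL}_w,\m)$, and leaves heredity implicit in the citation of \cite[Theorem 3.5]{Z19}. Your localization argument that $\mathcal{FL}_w$ is closed under kernels of epimorphisms correctly supplies that omitted step; it is the only part of ``resolving'' that needs proof, since ${}^{\bot}\mathcal{EC}_w$ automatically contains the projectives and is closed under extensions.
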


Let $\mathcal{EC}$ denote the class of all Enochs-cotorsion $R$-modules. Then $\mathcal{EC}_w\subseteq\mathcal{EC}$, i.e., every $w$-Enochs-cotorsion module is Enochs-cotorsion. However, the converse is not true in general. In fact, we have:

\begin{proposition}{\rm (\cite[Proposition 2.4]{ATA18})}\label{w-E-cotorsion=E-cotorsion}
	The following statements are equivalent for a ring $R$.
	\begin{enumerate}
		\item $\mathcal{EC}_w=\mathcal{EC}$.
		\item $\mathcal{FL}_w=\mathcal{FL}$.
		\item $R$ is a DW-ring.
	\end{enumerate}
\end{proposition}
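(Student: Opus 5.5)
We prove the cycle $(3)\Rightarrow(2)\Rightarrow(1)\Rightarrow(3)$, with the step $(1)\Rightarrow(2)$ doing essentially all the work.

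\emph{$(3)\Rightarrow(2)$.} If $R$ is a DW-ring, then $\GV(R)=\{R\}$. Hence a module is $\GV$-torsion if and only if it is zero. By \cite[Theorem 3.3]{KW14}, $M$ is $w$-flat if and only if $\Tor^R_1(M,N)$ is $\GV$-torsion for all $N$. Over a DW-ring this says $\Tor^R_1(M,N)=0$ for all $N$, i.e., $M$ is flat. So $\mathcal{FL}_w=\mathcal{FL}$.

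\emph{$(2)\Rightarrow(1)$.} This is immediate: taking right orthogonals gives $\mathcal{EC}_w={\mathcal{FL}_w}^\bot={\mathcal{FL}}^\bot=\mathcal{EC}$.

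\emph{$(1)\Rightarrow(2)$.} Assume $\mathcal{EC}_w=\mathcal{EC}$. By Proposition \ref{w-E-cotorsion pair}, $(\mathcal{FL}_w,\mathcal{EC}_w)$ is a cotorsion pair, so $\mathcal{FL}_w={}^{\bot}\mathcal{EC}_w$. On the other hand, $(\mathcal{FL},\mathcal{EC})$ is the classical flat cotorsion pair, so $\mathcal{FL}={}^{\bot}\mathcal{EC}$. This classical fact also follows from Proposition \ref{w-tor and cotor}, applied to the $\Tor$-pair $(\mathcal{FL},\m)$ with character modules in place of $(-)^\dag$. Hence
\[\mathcal{FL}_w={}^{\bot}\mathcal{EC}_w={}^{\bot}\mathcal{EC}=\mathcal{FL}.\]
The only ingredient needing care is that $\mathcal{FL}$ is exactly ${}^\bot\mathcal{EC}$. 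This is standard \cite[Theorem 3.2.9/Lemma 2.2.6]{GT06}, or can be verified directly via pure-injective character modules.

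\emph{$(1)\Rightarrow(3)$.} This is the main obstacle. From the previous step we may assume $\mathcal{FL}_w=\mathcal{FL}$, and we must show $\GV(R)=\{R\}$. Take any $J\in\GV(R)$. The module $R/J$ is $\GV$-torsion, hence $w$-flat, as noted in the introduction. By hypothesis $R/J$ is then flat. Since $R/J$ is finitely presented ($J$ is finitely generated), flatness gives that $R/J$ is projective. So $J$ is a direct summand of $R$, i.e., $J=Re$ for an idempotent $e$. Then $(1-e)J=0$. But $\Hom_R(R/J,R)=0$ because $J$ is a $\GV$-ideal, and $(1-e)$ spans an image of $R/J$ in $R$; hence $1-e=0$. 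Therefore $J=R$, so $\GV(R)=\{R\}$ and $R$ is a DW-ring. Alternatively, one may simply cite \cite[Proposition 2.1]{TAB19} as in the earlier Remark.
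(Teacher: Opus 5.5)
Your proof is correct. The substantive step, $(1)\Rightarrow(2)$, is exactly the paper's argument: you compare the left orthogonals of the cotorsion pairs $(\mathcal{FL}_w,\mathcal{EC}_w)$ and $(\mathcal{FL},\mathcal{EC})$.

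You differ from the paper in $(2)\Rightarrow(3)$. The paper does not prove this step; it only refers to the proof of \cite[Proposition 2.4]{ATA18}. You instead give a self-contained argument:
\begin{itemize}
\item $R/J$ is $\GV$-torsion, hence $w$-flat, hence flat;
\item $R/J$ is finitely presented and flat, hence projective, so $J=Re$ for an idempotent $e$;
\item $\Hom_R(R/J,R)=0$ then forces $1-e=0$, so $J=R$.
\end{itemize}
This argument is sound. It parallels the idempotent argument the paper uses at the end of $(3)\Rightarrow(4)$ in Theorem \ref{perfect ring}, with $\Hom_R(R/J,R)=0$ playing the role of the fact that projective modules are $w$-modules.

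An even shorter route is available inside the paper. Flat modules are $w$-modules, hence $\GV$-torsion-free. So a flat $\GV$-torsion module such as $R/J$ must vanish, and no finite-presentation or idempotent argument is needed. This is exactly how the paper argues in Proposition \ref{strongly w-flat=strongly flat}.

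Your rearrangement of the remaining implications, $(3)\Rightarrow(2)\Rightarrow(1)$ instead of the paper's direct $(3)\Rightarrow(1)$, is harmless. Note only that the step you label $(1)\Rightarrow(3)$ is really $(2)\Rightarrow(3)$ composed with your preceding step.
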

\begin{proof} $(1)\Rightarrow (2)$ By Proposition \ref{w-E-cotorsion pair}, we obtain
	\[\mathcal{FL}_w=~^{\bot}\mathcal{EC}_w=~^{\bot}\mathcal{EC}=\mathcal{FL}.\]
	
	$(2)\Rightarrow (3)$ See the proof of \cite[Proposition 2.4]{ATA18}.
	
	$(3)\Rightarrow (1)$ This is trivial.
\end{proof}

\subsection{$\GV$-Enochs-cotorsion modules}\quad

In this subsection, we introduce and study a new type of Enochs-cotorsion modules relative to $\tau_w$.

\begin{definition}\label{the definition of GV-E-cotorsion}
	Let $M$ be an $R$-module. Then $M$ is said to be a \textit{$\GV$-Enochs-cotorsion module} if $\Ext^1_R(F,M)$ is a $\GV$-torsion module for any $w$-flat $R$-module $F$, i.e., if $M\in{\mathcal{FL}_w}^{\bot_w}$.
\end{definition}

We will denote by $\mathcal{EC}_{\GV}$ the class of all $\GV$-Enochs-cotorsion $R$-modules, that is, $\mathcal{EC}_{\GV}={\mathcal{FL}_w}^{\bot_w}$. Clearly, $\mathcal{EC}_w\subseteq\mathcal{EC}_{\GV}$ and $\mathcal{SP}_{iw}\subseteq\mathcal{EC}_{\GV}$, i.e., both $w$-Enochs-cotorsion modules and $iw$-split modules are $\GV$-Enochs modules. 

Note that $(\mathcal{FL}_w,\m)$ is a $w$-$\Tor$-pair. Therefore, the following result is a consequence of Proposition \ref{w-tor and cotor}(4).

\begin{proposition}\label{GV-E-w-cotorsion pair}
	$\left(\mathcal{FL}_w,\mathcal{EC}_{\GV}\right)$ is a $w$-cotorsion pair.
\end{proposition}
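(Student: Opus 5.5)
The statement follows directly from Proposition \ref{w-tor and cotor}(4) applied to the $w$-$\Tor$-pair $(\mathcal{FL}_w,\m)$ of Example \ref{an example of a w-Tor-pair}. I would proceed in three steps.

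First, I confirm that $(\mathcal{FL}_w,\m)$ really is a $w$-$\Tor$-pair, using the characterization from \cite[Theorem 3.3]{KW14}: $M$ is $w$-flat if and only if $\Tor^R_1(M,N)$ is $\GV$-torsion for every $R$-module $N$.
\begin{itemize}
\item By this characterization and commutativity of $\Tor$, $\m^{\top_w}=\mathcal{FL}_w$.
\item Conversely, every module $N$ lies in $\mathcal{FL}_w^{\top_w}$, since $\Tor^R_1(F,N)\cong\Tor^R_1(N,F)$ is $\GV$-torsion whenever $F$ is $w$-flat.
\end{itemize}
So $\mathcal{FL}_w^{\top_w}=\m$, and both defining equalities of Definition \ref{the definition of w-Tor-pairs} hold.

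Second, I invoke Proposition \ref{w-tor and cotor}(4) with $\A=\mathcal{FL}_w$ and $\B=\m$. This gives $\mathcal{FL}_w={}^{\bot_w}(\mathcal{FL}_w^{\bot_w})$. By Definition \ref{the definition of GV-E-cotorsion}, $\mathcal{FL}_w^{\bot_w}=\mathcal{EC}_{\GV}$, so the two equalities required by Definition \ref{the definition of w-cotorsion-pairs} are:
\begin{itemize}
\item $\mathcal{EC}_{\GV}=\mathcal{FL}_w^{\bot_w}$, which holds by definition;
\item $\mathcal{FL}_w={}^{\bot_w}\mathcal{EC}_{\GV}$, which is exactly the equality just obtained.
\end{itemize}
Hence $(\mathcal{FL}_w,\mathcal{EC}_{\GV})$ is a $w$-cotorsion pair.

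The only step with real content is the inclusion ${}^{\bot_w}\mathcal{C}\subseteq\mathcal{FL}_w$, where $\mathcal{C}=\{N^\dag~|~N\in\m\}$, inside Proposition \ref{w-tor and cotor}(4). Since we may assume that proposition, nothing new is needed. For the reader, I would recall how it goes. For an arbitrary module $N$, one has
\[\Ext^1_R(A,N^\dag)\cong\Tor^R_1(A,N)^\dag.\]
This module is $\GV$-torsion by hypothesis and $\GV$-torsion-free because it is of the form $(-)^\dag$, so it vanishes. Lemma \ref{GV-torsion module} then shows $\Tor^R_1(A,N)$ is $\GV$-torsion, so $A$ is $w$-flat.
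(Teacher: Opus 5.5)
Your proposal is correct and matches the paper's proof, which also obtains the statement as an immediate consequence of Proposition \ref{w-tor and cotor}(4) applied to the $w$-$\Tor$-pair $(\mathcal{FL}_w,\m)$ of Example \ref{an example of a w-Tor-pair}. Your check that this pair is a $w$-$\Tor$-pair and your recap of why $\Tor^R_1(A,N)^\dag$ vanishes (it is both $\GV$-torsion and $\GV$-torsion-free) are both accurate.
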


Recall that an $R$-module $M$ is called a \textit{Matlis-cotorsion module} if $\Ext^1_R(Q,M)=0$, i.e., if $M\in Q^{\bot}$, where $Q$ is the classical ring of quotients of $R$. Let $\mathcal{MC}$ denote the class of all Matlis-cotorsion $R$-modules. Then $\mathcal{MC}=Q^{\bot}$.

\begin{proposition}\label{GV-E-cotorsion is M-cotorsion}
	If $R$ is a domain, then $\mathcal{MC}=Q^{\bot_{w}}$. Hence, $\mathcal{EC}_{\GV}\subseteq\mathcal{MC}$, i.e., every $\GV$-Enochs-cotorsion $R$-module is a Matlis-cotorsion module.
\end{proposition}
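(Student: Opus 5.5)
Since $Q^{\bot}\subseteq Q^{\bot_w}$ trivially, the content is the reverse inclusion: if $\Ext^1_R(Q,M)$ is $\GV$-torsion, then it is zero. My plan is to show that for a domain $R$ the module $\Ext^1_R(Q,M)$ is always $\GV$-torsion-free, indeed a $Q$-module. Then a module that is both $\GV$-torsion and $\GV$-torsion-free vanishes.

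\textbf{Step 1: $\Ext^1_R(Q,M)$ is a $Q$-vector space.} Since $R$ is a domain, $Q$ is its field of fractions. For each $0\neq s\in R$, multiplication by $s$ on $Q$ is an automorphism. Because $\Ext^1_R(-,M)$ is $R$-linear, multiplication by $s$ on $\Ext^1_R(Q,M)$ is induced by $\eta^Q_s$. Hence it is an automorphism of $\Ext^1_R(Q,M)$, so this module is uniquely divisible by nonzero elements of $R$, i.e.\ a $Q$-module.

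\textbf{Step 2: a $Q$-module is $\GV$-torsion-free when $R$ is not a field.} Let $N$ be a $Q$-module and let $J\in\GV(R)$ with $Jx=0$ for some $x\in N$. A $\GV$-ideal is nonzero: if $J=0$, then $\Hom_R(R/J,R)=R\neq0$, contradicting the defining condition. So $J$ contains some $s\neq0$. Then $sx=0$, and since $s$ acts invertibly on $N$, we get $x=0$. This argument works for any domain, including a field, where $\GV(R)=\{R\}$. Hence $\tor(\Ext^1_R(Q,M))=0$.

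\textbf{Step 3: conclusion.} If $M\in Q^{\bot_w}$, then $\Ext^1_R(Q,M)$ is both $\GV$-torsion and $\GV$-torsion-free, so it is $0$ and $M\in Q^{\bot}$. Therefore $\mathcal{MC}=Q^{\bot_w}$.

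\textbf{The inclusion $\mathcal{EC}_{\GV}\subseteq\mathcal{MC}$.} I observe that $Q$ is a flat $R$-module, hence $w$-flat. Thus for $M\in\mathcal{EC}_{\GV}={\mathcal{FL}_w}^{\bot_w}$, the module $\Ext^1_R(Q,M)$ is $\GV$-torsion, i.e.\ $M\in Q^{\bot_w}=\mathcal{MC}$.

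The only mildly delicate point is Step 1, namely checking that the $R$-module structure on $\Ext^1_R(Q,M)$ agrees with the one induced functorially through the first variable. This is standard, since $\Ext$ is $R$-bilinear over a commutative ring. I do not expect a serious obstacle.
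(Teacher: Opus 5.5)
Your proposal is correct and follows the paper's proof: the inclusion $\mathcal{MC}\subseteq Q^{\bot_w}$ is trivial, and for the reverse you use that $\Ext^1_R(Q,M)$ is a $Q$-module, hence $\GV$-torsion-free, hence zero if it is $\GV$-torsion; $\mathcal{EC}_{\GV}\subseteq\mathcal{MC}$ then follows because $Q$ is flat and therefore $w$-flat. The paper simply asserts the $\GV$-torsion-freeness, and your Steps 1--2 (uniquely divisible by nonzero $s$, and every $\GV$-ideal is nonzero) supply the justification it leaves implicit.
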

\begin{proof}
	Obviously, $\mathcal{MC}\subseteq Q^{\bot_{w}}$. Conversely, let $M\in Q^{\bot_{w}}$. Then $\Ext^1_R(Q,M)$ is a $\GV$-torsion $R$-module. But note that the $Q$-module $\Ext^1_R(Q,M)$ as an $R$-module is $\GV$-torsion-free. Therefore, $\Ext^1_R(Q,M)=0$, that is, $M\in\mathcal{MC}$, and so $Q^{\bot_{w}}\subseteq\mathcal{MC}$.
	
	Finally, since $Q$ is flat (and hence $w$-flat) over $R$, we obtain $\mathcal{EC}_{\GV}\subseteq\mathcal{MC}$.
\end{proof}

We will show, in Section 5, that $\mathcal{EC}_{\GV}\subsetneq\mathcal{MC}$ in general. Next, we shall see that the class of $\GV$-Enochs-cotorsion modules is useful in characterizing rings.

Recall that a ring $R$ is said to be a \textit{perfect ring} if every $R$-module has a projective cover; equivalently, if all flat $R$-modules are projective, if and only if every $R$-module is Enochs-cotorsion, and if and only if every flat $R$-module is Enochs-cotorsion. 

The perfect rings can now be also characterized in the following way.

\begin{theorem}\label{perfect ring}
	The following statements are equivalent for a ring $R$.
	\begin{enumerate}
		\item Every $R$-module is $\GV$-Enochs-cotorsion.
		\item Every $w$-flat $R$-module is $\GV$-Enochs-cotorsion;
		\item $\mathcal{FL}_w=\mathcal{SP}_w$.
		\item $R$ is a perfect ring.
	\end{enumerate}
\end{theorem}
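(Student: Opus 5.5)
I will prove the cycle $(1)\Rightarrow(2)\Rightarrow(3)\Rightarrow(4)\Rightarrow(1)$.

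The implication $(1)\Rightarrow(2)$ is trivial. For $(2)\Rightarrow(3)$, the inclusion $\mathcal{SP}_w\subseteq\mathcal{FL}_w$ is Corollary \ref{w-split is w-flat}. Conversely, let $F$ be $w$-flat and choose an exact sequence $0\rightarrow L\rightarrow P\rightarrow F\rightarrow 0$ with $P$ projective. The class $\mathcal{FL}_w$ is resolving, since $(\mathcal{FL}_w,\mathcal{EC}_w)$ is a hereditary cotorsion pair by Proposition \ref{w-E-cotorsion pair}. Alternatively, localize at each $\mathfrak{m}\in\wmax(R)$: then $L_\mathfrak{m}$ is the kernel of a surjection from a flat module onto the flat module $F_\mathfrak{m}$, hence is flat. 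Either way $L\in\mathcal{FL}_w$, so by (2) $L$ is $\GV$-Enochs-cotorsion. Hence $\Ext^1_R(F,L)$ is $\GV$-torsion, and Lemma \ref{w-split exact sequence} shows that the sequence is $w$-split. Thus $F\in\mathcal{SP}_w$.

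For $(3)\Rightarrow(4)$, I would show that every flat module is projective. Let $F$ be flat. By (3) it is $w$-split, so by Proposition \ref{localizations of w-split modules}, $F_\mathfrak{m}$ is free for every $\mathfrak{m}\in\wmax(R)$. This does not immediately give projectivity, and this step is the main obstacle, since the $w$-local condition only sees maximal $w$-ideals. My plan is first to prove that (3) forces $R$ to be a DW-ring, after which $w$-split coincides with projective (Remark, via \cite[Theorem 2.7]{AA21}) and flat coincides with $w$-flat (Proposition \ref{w-E-cotorsion=E-cotorsion}). Then every flat module is projective, so $R$ is perfect.

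To get DW from (3), I would use the fact that every $\GV$-torsion module is $w$-flat. So for $J\in\GV(R)$, the module $R/J$ is $w$-split by (3), and $\Ext^1_R(R/J,N)$ is $\GV$-torsion for all $N$. I would test this on a suitable module. Take a maximal ideal $\mathfrak{n}\supseteq J$, which is necessarily not a $w$-ideal. Then $R/\mathfrak{n}$ is $\GV$-torsion, hence $w$-split, and $\Ext^1_R(R/\mathfrak{n},N)$ is $\GV$-torsion for all $N$. Any $\GV$-torsion module $X$ is then $w$-split, so its projective presentation $0\rightarrow K\rightarrow P\rightarrow X\rightarrow 0$ is $w$-split. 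The maps $h_k$ give $d_kX\subseteq$ the image of $P$ split off, and multiplication by $d_k$ on $X$ factors through $P$. Since $\Hom_R(X,P)$ is $\GV$-torsion-free, the $\GV$-torsion module $X$ admits no nonzero maps into $P$ that survive. I expect to conclude that $d_k\cdot 1_X=0$, so $JX=0$ for a fixed $J$ uniformly. Applying this to $X=\bigoplus_{J'\in\GV(R)}R/J'$ forces a single $\GV$-ideal $J$ to be contained in every $\GV$-ideal. Using the fact that products of $\GV$-ideals are $\GV$-ideals (so $J^2\supseteq J$), I would derive a contradiction unless $J=R$, i.e.\ $\GV(R)=\{R\}$.

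Finally, $(4)\Rightarrow(1)$: over a perfect ring, which is zero-dimensional and hence DW by the list in the introduction, $\mathcal{FL}_w=\mathcal{FL}$ consists of projective modules. Hence $\Ext^1_R(F,M)=0$ for all $w$-flat $F$, so every module is $\GV$-Enochs-cotorsion.
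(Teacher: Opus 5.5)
Your route is the paper's. For $(2)\Rightarrow(3)$ you resolve the $w$-flat module and apply Lemma~\ref{w-split exact sequence}. For $(3)\Rightarrow(4)$ you reduce to showing $R$ is DW, using the $w$-split $\GV$-torsion module $\bigoplus\{R/J\mid J\in\GV(R)\}$ to obtain a $\GV$-ideal $J_0$ contained in every $\GV$-ideal. For $(4)\Rightarrow(1)$ you use zero-dimensionality. The detour through a maximal ideal $\mathfrak{n}\supseteq J$ is never used and can be dropped. Two steps in $(3)\Rightarrow(4)$ are not actually argued.

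\textbf{Why the $h_k$ vanish.} The reason is not that $\Hom_R(X,P)$ is $\GV$-torsion-free; it is that $\Hom_R(X,P)=0$. Let $f\colon X\to P$ and let $x\in X$ with $J'x=0$ for some $J'\in\GV(R)$. Then $J'f(x)=0$. Since $P$ is projective, it is $\GV$-torsion-free, so $f(x)=0$. Hence $d_k\cdot 1_X=gh_k=0$ for all $k$, and $JX=0$. This is \cite[Lemma 2.3]{WQ20}, which the paper simply cites.

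\textbf{From $J_0\subseteq J_0^2$ to $J_0=R$.} This is the more substantive gap. You write that you would ``derive a contradiction unless $J=R$'' without saying how, and this is the one non-formal step in obtaining DW. The missing ingredient is that a finitely generated idempotent ideal is generated by an idempotent.
\begin{enumerate}
\item Since $J_0$ is finitely generated and $J_0\cdot J_0=J_0$, Nakayama's determinant trick gives $e\in J_0$ with $(1-e)J_0=0$.
\item Because $J_0$ is a $\GV$-ideal, $\Hom_R(R/J_0,R)=0$, so no nonzero element of $R$ is annihilated by $J_0$.
\item Hence $1-e=0$, so $1\in J_0$ and $J_0=R$.
\end{enumerate}
The paper phrases the same conclusion differently: $J_0=eR$ is projective, hence a $w$-module, so $J_0=(J_0)_w=R$.

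With these two points filled in, your proof is complete and coincides with the paper's.
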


\begin{proof}
	$(1)\Rightarrow (2)$ This is trivial.
	
	$(2)\Rightarrow (3)$ By Corollary \ref{w-split is w-flat}, we need only show that $\mathcal{FL}_w\subseteq\mathcal{SP}_w$. Let $F$ be any $w$-flat $R$-module. Then there is an exact sequence of $R$-modules
	\[0\rightarrow G\rightarrow P\rightarrow F\rightarrow 0,\eqno{(\star)}\]
	where $P$ is projective and $G$ is $w$-flat. By (2), $G$ is a $\GV$-Enochs-cotorsion module, and so $\Ext_R^1(F,G)$ is $\GV$-torsion. Thus, Lemma \ref{w-split exact sequence} says that $(\star)$ is a $w$-split exact sequence. Hence, $F$ is a $w$-split module.
	
	$(3)\Rightarrow (4)$ It suffices to prove that $R$ is a DW-ring. For this, we put
	\[M=\bigoplus\{R/J~|~J\in\GV(R)\}.\]
	Clearly, $M$ is a $\GV$-torsion $R$-module, and so it is a $w$-flat $R$-module. Therefore, by (3), $M$ is $w$-split over $R$. Thus, \cite[Lemma 2.3]{WQ20} says that there exists $J_0\in\GV(R)$ with $J_0M=0$, which yields $J_0\subseteq J$ for all $J\in\GV(R)$. In particular, $J_0\subseteq J_0^2$, and consequently $J_0=J_0^2$. This means that $J_0$ can be generated by an idempotent, and hence it is projective. But every projective module is a $w$-module, so $J_0=(J_0)_w=R$. This clearly forces $\GV(R)=\{R\}$, that is, $R$ is a DW-ring.
	
	$(4)\Rightarrow (1)$ Suppose that $R$ is a perfect ring. Then $\dim(R)=0$. Therefore, $R$ is a DW-ring, and $\mathcal{EC}_{\GV}=\mathcal{EC}$. Thus, (1) follows from \cite[Proposition 3.3.1]{X96}.
\end{proof}

It is well known that a ring $R$ is a von Neumann regular ring if and only if every $R$-module is flat. Thus it is immediate that $R$ is von Neumann regular if and only if every Enochs-cotorsion $R$-module is injective. In fact, this ring can also be characterized by the coincidence of the classes $\mathcal{EC}_{\GV}$ and $\mathcal{SP}_{iw}$.

\begin{theorem}\label{strong GV-E-cotorsion=iw-split}
	The following statements are equivalent for a ring $R$.
	\begin{enumerate}
		\item $\mathcal{EC}_{\GV}=\mathcal{I}_0$.
		\item $\mathcal{EC}_{\GV}=\mathcal{SP}_{iw}$.
		\item $\mathcal{FL}_w=\m$, i.e., every $R$-module is $w$-flat.
		\item $\mathcal{EC}_{w}=\mathcal{I}_0$.
		\item $R$ is a von Neumann regular ring.
	\end{enumerate}
\end{theorem}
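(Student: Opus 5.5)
We would prove the cycle $(1)\Rightarrow(2)\Rightarrow(3)\Rightarrow(5)\Rightarrow(1)$, and attach $(4)$ to it through $(3)\Leftrightarrow(4)$. Throughout we use the orthogonality relations already established. By Proposition \ref{GV-E-w-cotorsion pair}, $(\mathcal{FL}_w,\mathcal{EC}_{\GV})$ is a $w$-cotorsion pair. By Proposition \ref{w-E-cotorsion pair}, $(\mathcal{FL}_w,\mathcal{EC}_w)$ is a cotorsion pair. By Example \ref{examples of w-cotorsion pairs}, $(\m,\mathcal{SP}_{iw})$ is a $w$-cotorsion pair. We also use the evident inclusions $\mathcal{I}_0\subseteq\mathcal{SP}_{iw}\subseteq\mathcal{EC}_{\GV}$: an injective $N$ has $\Ext^1_R(-,N)=0$, which is $\GV$-torsion.

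$(1)\Rightarrow(2)$. From $\mathcal{I}_0\subseteq\mathcal{SP}_{iw}\subseteq\mathcal{EC}_{\GV}=\mathcal{I}_0$ all three classes coincide.

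$(2)\Rightarrow(3)$. Take left $w$-orthogonals and use the two $w$-cotorsion pairs. This gives
\[\mathcal{FL}_w=\rule{0pt}{10pt}^{\bot_w}\mathcal{EC}_{\GV}=\rule{0pt}{10pt}^{\bot_w}\mathcal{SP}_{iw}=\m.\]

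$(3)\Leftrightarrow(4)$. If $\mathcal{FL}_w=\m$, then $\mathcal{EC}_w=\m^{\bot}=\mathcal{I}_0$. Conversely, if $\mathcal{EC}_w=\mathcal{I}_0$, then $\mathcal{FL}_w={}^{\bot}\mathcal{EC}_w={}^{\bot}\mathcal{I}_0=\m$, using that $(\mathcal{FL}_w,\mathcal{EC}_w)$ is a cotorsion pair.

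$(5)\Rightarrow(1)$. A von Neumann regular ring has weak global dimension $0\le 1$, so it is a DW-ring. Then $\GV(R)=\{R\}$, the only $\GV$-torsion module is $0$, and $w$-flat means flat. Hence
\[\mathcal{EC}_{\GV}={\mathcal{FL}_w}^{\bot_w}=\mathcal{FL}^{\bot}=\m^{\bot}=\mathcal{I}_0.\]

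$(3)\Rightarrow(5)$. This is the step we expect to be the main obstacle. Knowing that every module is $w$-flat only controls the localizations at maximal $w$-ideals: each $R_{\mathfrak m}$ with $\mathfrak m\in\wmax(R)$ is von Neumann regular and local, hence a field. Knowing this alone does not immediately give flatness at maximal ideals that are not $w$-ideals. The cleanest route is to invoke \cite[Theorem 4.4]{WK14}, which characterizes von Neumann regular rings as those over which every module is $w$-flat; this is the citation we would use.

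If a self-contained argument is preferred, we would try to show that $R$ is a DW-ring, after which $w$-flat equals flat and we are done. Suppose $\mathfrak n$ is a maximal ideal that is not a $w$-ideal. Then $\mathfrak n$ contains some $J\in\GV(R)$. Since $R_{\mathfrak m}$ is a field for each $\mathfrak m\in\wmax(R)$, $\dim R_{\mathfrak m}=0$. We would then aim to show that elements of $J$ generate a $\GV$-ideal that must be $R$: for instance, show that the finitely generated ideal $J$ is idempotent modulo $\GV$-torsion, and hence is generated by an idempotent. At that point the idempotent argument from the proof of Theorem \ref{perfect ring} yields $J=J_w=R$, which contradicts $J\subseteq\mathfrak n$.
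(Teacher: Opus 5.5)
Your proof is correct and is essentially the paper's. In both, every implication except one follows formally from the cotorsion and $w$-cotorsion pairs, and the remaining one is taken from the literature. The paper cites [ATA18, Corollary 2.9] for (4)$\Rightarrow$(5); you cite [WK14, Theorem 4.4] for (3)$\Rightarrow$(5), after deriving (4)$\Rightarrow$(3) from the cotorsion pair $(\mathcal{FL}_w,\mathcal{EC}_w)$. Since (3)$\Leftrightarrow$(4) is formal, the two citations supply the same fact.

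Discard the optional self-contained sketch, though, because it cannot work as written. For any $J\in\GV(R)$, the module $J/J^2$ is annihilated by $J$, so ``$J$ is idempotent modulo $\GV$-torsion'' holds automatically. It therefore cannot give the literal equality $J=J^2$ that the idempotent argument of Theorem \ref{perfect ring} requires. A genuine argument has to use $w$-flatness of modules that are not $\GV$-torsion. For example, $\Tor^R_1(R/aR,R/aR)\cong aR/a^2R$ is $\GV$-torsion for every $a\in R$, and together with $J^*=R$ this can be pushed to $a\in a^2R$.
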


\begin{proof}
	$(1)\Rightarrow (2)$ This is trivial.
	
	$(2)\Rightarrow (3)$ From Proposition\ref{GV-E-w-cotorsion pair}, we have
	\[\mathcal{FL}_w=\rule{0pt}{10pt}^{\bot_w}\mathcal{EC}_{\GV}=\rule{0pt}{10pt}^{\bot_w}\mathcal{SP}_{iw}=\m.\]
	
	$(3)\Rightarrow (4)$ Clear.
	
	$(4)\Rightarrow (5)$ see \cite[Corollary 2.9]{ATA18}.
	
	$(5)\Rightarrow (1)$ It follows from the fact that every von Neumann regular ring is a DW-ring.
\end{proof}

We close this section proving that the coincidence of the classes of two $\tau_w$-relative versions of Enochs-cotorsion modules characterizes DW-rings.

\begin{theorem}\label{GV-E-torsion=w-E-torsion}
	The following statements are equivalent for a ring $R$.
	\begin{enumerate}
		\item $\mathcal{EC}_{\GV}=\mathcal{EC}_w$.
		\item every $iw$-split $R$-module is $w$-Enochs-cotorsion.
		\item $R$ is a DW-ring.
	\end{enumerate}
\end{theorem}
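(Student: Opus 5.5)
We establish the cycle $(1)\Rightarrow(2)\Rightarrow(3)\Rightarrow(1)$.

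The step $(1)\Rightarrow(2)$ is immediate. We already observed that $\mathcal{SP}_{iw}\subseteq\mathcal{EC}_{\GV}$: if $N$ is $iw$-split, then $\Ext^1_R(F,N)$ is $\GV$-torsion for every module $F$, in particular for every $w$-flat $F$. So under (1) we get $\mathcal{SP}_{iw}\subseteq\mathcal{EC}_w$. The step $(3)\Rightarrow(1)$ is also easy. Over a DW-ring, $\GV(R)=\{R\}$, so the only $\GV$-torsion module is $0$. Hence ${\mathcal{FL}_w}^{\bot_w}={\mathcal{FL}_w}^{\bot}$, which says exactly $\mathcal{EC}_{\GV}=\mathcal{EC}_w$. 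Alternatively, one can quote Proposition \ref{the relation between w-cotorsion pair and cotorsion pair} together with Propositions \ref{w-E-cotorsion pair} and \ref{GV-E-w-cotorsion pair}.

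The substance is $(2)\Rightarrow(3)$. The idea is to test (2) against a $\GV$-torsion module, which is $w$-flat. Any module $N$ with $\Ext^1_R(M,N)$ $\GV$-torsion for all $M$ is $iw$-split by \cite[Theorem 2.3]{Wu24}. Take $N=E_1/R$ for a suitable injective $E_1$, or more simply note that every $\GV$-torsion module $T$ is $iw$-split. To see this, fix $M$ and $J\in\GV(R)$ with $Jx=0$ for a given $x$; the element $x$ acts on $\Ext^1_R(M,T)$ through functoriality in the second variable. Since $T$ need not be bounded, this route is awkward. We therefore use a cleaner choice. Let $J\in\GV(R)$ and consider $N=R/J$, which is annihilated by $J$. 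Then every $\Ext^1_R(M,R/J)$ is annihilated by $J$, hence is $\GV$-torsion, so $R/J$ is $iw$-split. By (2), $R/J\in\mathcal{EC}_w$. Since $R/J$ is $\GV$-torsion, it is $w$-flat. Thus $R/J$ lies in $\mathcal{FL}_w\cap\mathcal{FL}_w^{\bot}$, and in particular $\Ext^1_R(R/J,R/J)=0$.

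Now consider the exact sequence $0\to J\to R\to R/J\to 0$. Every term is $w$-flat: $J$ and $R$ are $w$-flat because $J_{\mathfrak m}=R_{\mathfrak m}$ for each $\mathfrak m\in\wmax(R)$. Applying $\Hom_R(-,R/J)$ gives the exact sequence
\[\Hom_R(R,R/J)\to\Hom_R(J,R/J)\to\Ext^1_R(R/J,R/J)=0.\]
Hence every homomorphism $J\to R/J$ extends to $R$, and so is multiplication by some element of $R$ followed by the quotient map. Since $J\cdot(R/J)=0$, any such map is zero on $J$. On the other hand, the map $J\to J/J^2$ gives a homomorphism $J\to R/J$ after tensoring with $R/J$: indeed $\Hom_R(J,R/J)\cong\Hom_{R/J}(J/J^2,R/J)$. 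We conclude that $\Hom_{R/J}(J/J^2,R/J)=0$.

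The main obstacle is to get from this to $J=R$. One might first try to show $J/J^2=0$ directly, but that is not automatic, so a different argument is better. Let $n(J)$ be the minimal number of generators of $J$, and argue locally at a maximal ideal $\mathfrak n\supseteq J$. Localizing, $(J/J^2)_{\mathfrak n}$ is a nonzero finitely generated module over the local ring $(R/J)_{\mathfrak n}$ by Nakayama. It surjects onto the residue field, and the residue field embeds in $(R/J)_{\mathfrak n}$ only when its socle is nonzero, which need not hold.

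So we use a more robust route. Apply (2) to $E_1:=E(R/J)$, or use the injective $E$ with $0\to R/J\to E\to C\to 0$, where $C$ is $\GV$-torsion because $E(R/J)$ is $\GV$-torsion (the theory is hereditary and $\GV$-torsion modules are closed under essential extensions). Then $C$ is $w$-flat and $\Ext^1_R(C,R/J)=0$, so the sequence splits. Hence $R/J$ is injective. Being injective and annihilated by $J$, the module $R/J$ satisfies $\Ext^1_R(R/J,R/J)=0$ and also $\Ext^1_R(R/J,R)=0$ by the $\GV$ property. Finally, $\Hom_R(J,R/J)\to\Ext^1(R/J,R/J)$ is surjective and the inclusion-induced map forces the quotient $R\to R/J$ to kill $J$ trivially. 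I would then use injectivity of $R/J$ to extend the identity of $R/J$ through any module, and combine it with $\Hom_R(R/J,R)=0$ to conclude $J=R$. This final deduction is the step I expect to need the most care.
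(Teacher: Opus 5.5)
Your arguments for $(1)\Rightarrow(2)$ and $(3)\Rightarrow(1)$ are fine, but $(2)\Rightarrow(3)$ has a genuine gap.

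Testing (2) on $N=R/J$ only gives $\Hom_R(J,R/J)\cong\Hom_{R/J}(J/J^2,R/J)=0$. As you observe yourself, this does not force $J=R$, because $J/J^2$ need not map nontrivially into $R/J$. Your fallback route does not close the gap, for two reasons.

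First, the claim that $E(R/J)$ is $\GV$-torsion does not follow from heredity. A hereditary torsion theory has its torsion-\emph{free} class closed under injective envelopes. Closure of the torsion class under essential extensions is the stronger property of stability, and you have not established it for $\tau_w$.

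Second, even granting that $R/J$ is injective, you never carry out the deduction of $J=R$ from injectivity of $R/J$ and $\Hom_R(R/J,R)=0$. You only announce it, and it is not clear how it would go.

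The missing idea is to apply (2) to $N=J/J^2$ instead of $R/J$.

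\begin{enumerate}
\item Since $JN=0$, every $\Ext^1_R(M,N)$ is annihilated by $J$. So $N$ is $iw$-split, and by (2), $N\in\mathcal{EC}_w$.
\item As $R/J$ is $\GV$-torsion and hence $w$-flat, $\Ext^1_R(R/J,N)=0$.
\item Therefore the canonical surjection $f:J\to J/J^2$ extends to some $g:R\to N$. For $b\in J$ this gives $f(b)=g(b)=b\,g(1)\in JN=0$.
\item So $f=0$, i.e.\ $J=J^2$. This is exactly the argument you ran for $R/J$, but with a target that receives a surjection from $J$.
\end{enumerate}

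To finish: since $J$ is finitely generated and idempotent, $J=eR$ for an idempotent $e$. Then $\bar r\mapsto(1-e)r$ is a well-defined homomorphism $R/J\to R$. It must vanish because $\Hom_R(R/J,R)=0$, so $e=1$ and $J=R$. (The paper argues instead that $J$ is projective, hence a $w$-module, so $J=J_w=R$.) Since $J\in\GV(R)$ was arbitrary, $\GV(R)=\{R\}$, and $R$ is a DW-ring.
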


\begin{proof}
	$(1)\Rightarrow (2)$ and $(3)\Rightarrow (1)$ are both obvious.
	
	$(2)\Rightarrow (3)$ Let $J\in\GV(R)$ and write $N=J/J^2$. Then $JN=0$, and consequently $N$ is $iw$-split by \cite[Theorem 2.10]{Wu24}. From (2), $N$ is a $w$-Enochs-cotorsion module. Now we consider the following exact sequence of $R$-modules
	\[0\rightarrow\Hom_R(R/J,N)\rightarrow\Hom_R(R,N)\rightarrow\Hom_R(J,N)\rightarrow\Ext^1_R(R/J,N).\]
	Since $R/J$ is a $\GV$-torsion $R$-module, it is $w$-flat over $R$, and so $\Ext^1_R(R/J,N)=0$. Let $f:J\rightarrow N$ be the natural epimorphism, that is, $f(a)=\bar{a}$, for each $a\in J$. Then there exists a homomorphism of $R$-modules $g:R\rightarrow N$ such that $g(a)=f(a)$ for all $a\in J$, i.e., $g|_J=f$. Set $\bar{b}=g(1)$, $b\in J$. Therefore, \[g(1)=\bar{b}=f(b)=g(b)=bg(1)=b\bar{b}=\bbar{b^2}=\bar{0},\]
	and hence $g=0$. As $f$ is an epimorphism, so is $g$. Thus $N=0$, i.e., $J=J^2$. Now, the rest of the proof is the same as that in $(3)\Rightarrow (4)$ of Theorem \ref{perfect ring}.
\end{proof}

\section{Warfield-cotorsion modules relative to $\tau_w$}

In this section, we shall introduce and study two classes of Warfield-cotorsion modules relative to $\tau_w$. For this purpose we first have to discuss the torsion-free modules relative to $\tau_w$.

\subsection{Torsion-free modules relative to $\tau_w$}\quad

The notation $R^{\times}$ is used for the set of nonzero divisors of $R$. Recall that an $R$-module $M$ is said to be \textit{torsion-free}, if $sx=0$ implies $x=0$ for all $s\in R^{\times}$ and all $x\in M$, equivalently, if $\Tor^R_1(R/sR,M)=0$ for all $s\in R^{\times}$. We will denote by $\mathcal{TF}$ the class of all torsion-free $R$-modules.

\begin{definition}\label{the definition of w-torsion-free}
	Let $M$ be an $R$-module. Then $M$ is called \textit{$w$-torsion-free} if $\Tor^R_1(R/sR,M)$ is a $\GV$-torsion $R$-module for all $s\in R^{\times}$.
\end{definition}

Let us denote by $\mathcal{TF}_w$ the class of all $w$-torsion-free $R$-modules. Clearly, $\mathcal{TF}\subseteq \mathcal{TF}_w$, i.e., every torsion-free module is $w$-torsion-free. Also, $\mathcal{FL}_w\subseteq\mathcal{TF}_w$, i.e., every $w$-flat module is $w$-torsion-free. In particular, every $\GV$-torsion module is $w$-torsion-free. However, a $w$-torsion-free module is not necessarily a torsion-free module. 

\begin{example}
	Let $R$ be a domain which is not a DW-domain. Then there exists $J\in\GV(R)$ with $0\neq J\neq R$. Thus, $R/J$ is a $\GV$-torsion $R$-module, and hence it is $w$-torsion-free over $R$. But it is obvious that $R/J$ is not a torsion-free $R$-module.
\end{example}

The next two propositions gives a number of characterizations of $w$-torsion-free modules.

\begin{proposition}\label{some characterizations of w-torsion-free modules}
	Let $M$ be an $R$-module. Then the following statements are equivalent.
	\begin{enumerate}
		\item $M$ is $w$-torsion-free.
		\item For any $x\in M$ and any $s\in R^\times$, if $sx=0$, then there is $J\in\GV(R)$ with $Jx=0$.
		\item The natural $R$-homomorphism $\theta:M\longrightarrow Q\bigotimes_R M$ is a $w$-monomorphism.
		\item  $\Tor^R_1(K,M)$ is a $\GV$-torsion $R$-module, where $K=Q/R$.
	\end{enumerate}
\end{proposition}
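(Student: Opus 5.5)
We prove $(1)\Leftrightarrow(2)$, $(2)\Leftrightarrow(3)$ and $(3)\Leftrightarrow(4)$, using throughout two facts: a module is $\GV$-torsion iff every element is killed by some $\GV$-ideal, and iff its localizations at all $\mathfrak{m}\in\wmax(R)$ vanish.

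For $(1)\Leftrightarrow(2)$, fix $s\in R^\times$. Tensoring $0\to R\xrightarrow{s}R\to R/sR\to0$ with $M$ gives
\[\Tor^R_1(R/sR,M)\cong\{x\in M~|~sx=0\},\]
the $s$-torsion submodule of $M$. Condition (1) says that this submodule is $\GV$-torsion for every $s$. That is, every $x$ with $sx=0$ satisfies $Jx=0$ for some $J\in\GV(R)$, which is exactly (2).

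For $(2)\Leftrightarrow(3)$, we have $Q\otimes_R M\cong R^{\times-1}M$, so $\ker\theta=\{x\in M~|~sx=0 \text{ for some } s\in R^\times\}$. Localization is exact, so $\theta_{\mathfrak{m}}$ is injective iff $(\ker\theta)_{\mathfrak{m}}=0$. Hence $\theta$ is a $w$-monomorphism iff $\ker\theta$ is $\GV$-torsion. By the elementwise description of $\GV$-torsion, this holds iff every $x$ killed by some nonzerodivisor is killed by some $\GV$-ideal, which is (2).

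For $(3)\Leftrightarrow(4)$, tensor $0\to R\to Q\to K\to 0$ with $M$. Since $Q$ is flat, $\Tor^R_1(Q,M)=0$, giving the exact sequence
\[0\to\Tor^R_1(K,M)\to M\xrightarrow{\theta}Q\otimes_R M.\]
Thus $\Tor^R_1(K,M)\cong\ker\theta$, and (4) says $\ker\theta$ is $\GV$-torsion, which is the reformulation of (3) just obtained.

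The only step needing care is identifying the connecting map $M\to Q\otimes_R M$ with $\theta$, and $\ker\theta$ with the usual torsion submodule via $Q\otimes_R M\cong R^{\times-1}M$. Both are standard, so the proof has no real obstacle.
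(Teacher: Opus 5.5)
Your proposal is correct and follows the paper's proof step for step. It uses the same three equivalences, via $\Tor^R_1(R/sR,M)\cong\{x\in M\mid sx=0\}$, the description of $\ker\theta$ as the torsion submodule, and $\Tor^R_1(K,M)\cong\ker\theta$, and you also correctly spell out why $\theta$ is a $w$-monomorphism iff $\ker\theta$ is $\GV$-torsion, a step the paper leaves implicit.

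One small terminological slip: the map $M\cong R\otimes_R M\to Q\otimes_R M$ in the long exact sequence is induced by the inclusion $R\to Q$ and is not a connecting map (the connecting map is $\Tor^R_1(K,M)\to M$). This does not affect the argument.
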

\begin{proof}
	$(1)\Leftrightarrow (2)$ This follows from the isomorphism
		\[\Tor^R_1(R/sR,M)\cong\{x\in M~|~sx=0\},\]
		where $s\in R^{\times}$.
		
	$(2)\Leftrightarrow (3)$ Since $\ker\theta=\{x\in M~|~\mbox{$sx=0$ for some $s\in R^{\times}$}\}$, the equivalence of (2) and (3) is clear.
	
	$(3)\Leftrightarrow (4)$ It follows immediately from the isomorphism
	\[\Tor^R_1(K,M)\cong\ker\theta.\]
\end{proof}

By Proposition \ref{some characterizations of w-torsion-free modules}, it is not difficult to check that $\mathcal{TF}_w$ is closed under extensions and $w$-isomorphisms. Furthermore, if $R$ is a domain, then one can easily get the following result.

\begin{proposition}\label{some characterizations of w-torsion-free modules over domains}
	Let $R$ be a domain and $M$ an $R$-module. Then the following statements are equivalent.
	\begin{enumerate}
		\item $M$ is $w$-torsion-free.
		\item  $M_{\mathfrak{p}}$ is a torsion-free $R_{\mathfrak{p}}$-module for any prime $w$-ideal $\mathfrak{p}$ of $R$.
		\item  $M_{\mathfrak{m}}$ is a torsion-free $R_{\mathfrak{m}}$-module for any maximal $w$-ideal of $R$.
	\end{enumerate}
\end{proposition}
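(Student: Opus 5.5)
Over a domain $R^{\times}=R\setminus\{0\}$ and $Q$ is the fraction field. Put $T:=\Tor^R_1(K,M)\cong\ker\theta$, the torsion submodule $t(M)$ of $M$; Proposition \ref{some characterizations of w-torsion-free modules} says $M$ is $w$-torsion-free exactly when $T$ is $\GV$-torsion. The plan is to prove $(1)\Rightarrow(2)\Rightarrow(3)\Rightarrow(1)$, with every step reduced to the behaviour of $T$ under localization.

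The key observation is that torsion commutes with localization. Since $R_{\p}$ is flat over $R$ and $K_{\p}=Q/R_{\p}$ (as $Q_{\p}=Q$ over a domain), we get
\[T_{\p}\cong\Tor^{R_{\p}}_1(Q/R_{\p},M_{\p})\cong t_{R_{\p}}(M_{\p}).\]
One can also see this directly: $x/u\in M_{\p}$ is killed by a nonzero element of $R_{\p}$ iff $x$ is killed by a nonzero element of $R$. Hence, for any prime $\p$, the module $M_{\p}$ is torsion-free over $R_{\p}$ iff $T_{\p}=0$.

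$(1)\Rightarrow(2)$: Suppose $T$ is $\GV$-torsion and $\p$ is a prime $w$-ideal. Each $x\in T$ satisfies $Jx=0$ for some $J\in\GV(R)$. We have $J\nsubseteq\p$, since otherwise $J_w=R\subseteq\p_w=\p$. Choosing $t\in J\setminus\p$ gives $x/1=0$ in $T_{\p}$, so $T_{\p}=0$. By the observation, $M_{\p}$ is torsion-free. This is the same argument used in Lemma \ref{localization of w-split exact sequence}.

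$(2)\Rightarrow(3)$: This is trivial, because maximal $w$-ideals are prime $w$-ideals.

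$(3)\Rightarrow(1)$: By the observation, $T_{\mathfrak{m}}=0$ for every $\mathfrak{m}\in\wmax(R)$. The cited criterion \cite[Theorem 6.2.13]{WK24}, that a module is $\GV$-torsion iff its localizations at all maximal $w$-ideals vanish, then gives that $T$ is $\GV$-torsion. So $M$ is $w$-torsion-free by Proposition \ref{some characterizations of w-torsion-free modules}.

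The only point needing care is the identification $T_{\p}\cong t_{R_{\p}}(M_{\p})$, which uses the domain hypothesis (so that $R^{\times}$ localizes to the nonzero elements of $R_{\p}$). I expect this to be the main, though mild, obstacle. Everything else is formal.
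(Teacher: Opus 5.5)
Your proof is correct and is essentially the intended argument. The paper gives no written proof; it only remarks that the result follows easily from Proposition \ref{some characterizations of w-torsion-free modules}. Your route supplies exactly that routine verification: localize $\Tor^R_1(K,M)\cong\ker\theta$, which is the torsion submodule of $M$, at prime $w$-ideals. Use $J\nsubseteq\p$ for $J\in\GV(R)$, as in Lemma \ref{localization of w-split exact sequence}, for one direction, and the criterion \cite[Theorem 6.2.13]{WK24} for the converse.
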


\begin{corollary}\label{w-torsion-free and GV-torsion-free}
	Let $M$ be a $GV$-torsion-free and $w$-torsion-free $R$-module. Then $M$ is torsion-free.
\end{corollary}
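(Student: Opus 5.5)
The plan is to verify directly that the torsion submodule of $M$ vanishes, using the characterization in Proposition \ref{some characterizations of w-torsion-free modules}(2). Fix $x\in M$ and $s\in R^{\times}$ with $sx=0$; we must show that $x=0$.

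Since $M$ is $w$-torsion-free, Proposition \ref{some characterizations of w-torsion-free modules}(2) supplies a $\GV$-ideal $J\in\GV(R)$ with $Jx=0$. This says precisely that $x\in\tor(M)$. Because $M$ is $\GV$-torsion-free, $\tor(M)=0$, and so $x=0$. Hence $M$ has no nonzero elements annihilated by a non-zero-divisor, that is, $M$ is torsion-free.

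An equivalent formulation goes as follows. For $s\in R^{\times}$, the module $\Tor^R_1(R/sR,M)\cong\{x\in M~|~sx=0\}$ is a submodule of $M$. Since $\F_{\GV}$ is closed under submodules (see the Proposition in the preliminaries), this submodule is $\GV$-torsion-free. It is also $\GV$-torsion by Definition \ref{the definition of w-torsion-free}, so it is zero by $\T_{\GV}\cap\F_{\GV}=\{0\}$.

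There is no real obstacle here. The only point to check is the identification of $\Tor^R_1(R/sR,M)$ with the $s$-torsion submodule of $M$, which comes from applying $-\otimes_R M$ to the free resolution $0\to R\xrightarrow{s}R\to R/sR\to 0$ (exact because $s$ is a non-zero-divisor).
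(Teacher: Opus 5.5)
Your proof is correct and is essentially the paper's argument: Proposition \ref{some characterizations of w-torsion-free modules}(2) gives $Jx=0$ for some $J\in\GV(R)$, and then $\GV$-torsion-freeness of $M$ forces $x=0$. Your second formulation via $\Tor^R_1(R/sR,M)$ is an equivalent repackaging of the same idea. It uses the closure of $\GV$-torsion-free modules under submodules together with $\T_{\GV}\cap\F_{\GV}=\{0\}$.
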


\begin{proof}
	For any $x\in M$ and any $x\in R^\times$ with $sx=0$, we obtain, via Proposition \ref{some characterizations of w-torsion-free modules}(2), that there exists $J\in\GV(R)$ such that $Jx=0$. But $M$ is $\GV$-torsion-free, so $x=0$, which gives that $M$ is torsion-free. 
\end{proof}

It is obvious that if $R$ is a DW-ring, then $\mathcal{TF}_w=\mathcal{TF}$. The converse is true in the domain case.

\begin{proposition}\label{w-torsion-free=torsion-free}
	If $R$ satisfies $\mathcal{TF}_w=\mathcal{TF}$, then every proper $\GV$-ideal is contained in the set of zero-divisors of $R$. Hence, if $R$ is a domain, then it is a DW-domain.
\end{proposition}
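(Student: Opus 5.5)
Suppose $\mathcal{TF}_w=\mathcal{TF}$. Let $J$ be a proper $\GV$-ideal of $R$. We argue by contradiction: suppose some $s\in J$ is a non-zero-divisor, i.e. $s\in R^\times$, and we build a $w$-torsion-free module that is not torsion-free.

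The natural test module is $M=R/J$. It is $\GV$-torsion, since $J\cdot\bar 1=0$, so every element is annihilated by $J\in\GV(R)$. As the paper already remarks, every $\GV$-torsion module is $w$-flat and hence $w$-torsion-free. Alternatively, criterion (2) of Proposition~\ref{some characterizations of w-torsion-free modules} holds trivially, because each $x\in M$ satisfies $Jx=0$. So $M\in\mathcal{TF}_w=\mathcal{TF}$.

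On the other hand, the element $\bar 1\in R/J$ is nonzero, since $J$ is proper, and $s\bar 1=\bar s=0$ because $s\in J$. With $s\in R^\times$, this contradicts the torsion-freeness of $M$. Hence no element of $J$ is a non-zero-divisor, i.e. $J\subseteq R\setminus R^\times$, the set of zero-divisors. (We read ``contained in the set of zero-divisors'' as $J\subseteq Z(R)$, not as containment in a single ideal. No prime avoidance is needed, since the conclusion is only about elements.)

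For the domain case, the zero-divisors of a domain form $\{0\}$, so every proper $\GV$-ideal is $0$. But $0$ is not a $\GV$-ideal of a domain: if $R\neq 0$, then $\Hom_R(R/0,R)=R\neq 0$. Therefore $\GV(R)=\{R\}$, which by the characterization recalled in the introduction means $R$ is a DW-domain. No step is a genuine obstacle; the only point needing care is that $R/J$ lies in $\mathcal{TF}_w$, and this is already noted in the paper.
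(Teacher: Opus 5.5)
Your proof is correct and follows the paper's argument: take the test module $R/J$, which is $\GV$-torsion and hence lies in $\mathcal{TF}_w=\mathcal{TF}$, and observe that any non-zero-divisor $s\in J$ kills the nonzero element $\bar 1$. In the domain case you also check explicitly that $0\notin\GV(R)$, using $\Hom_R(R,R)\neq 0$; the paper leaves this step implicit when it passes directly to $\GV(R)=\{R\}$.
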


\begin{proof}
	Let $J$ be any proper $\GV$-ideal of $R$. Then $R/J$ is a nonzero $\GV$-torsion $R$-module. Therefore, $R/J\in\mathcal{TF}_w=\mathcal{TF}$, that is, $R/J$ is a torsion-free $R$-module. Notice that for each $a\in J$, $a\cdot\bar{1}=0$, where $\bar{1}=1+J$. Thus, if $a$ is a nonzero divisor, then $\bar{1}=0$, i.e., $1\in J$, and so $J=R$, a contradiction.
	
	Furthermore, if $R$ is a domain, then $\GV(R)=\{R\}$, and hence $R$ is a DW-domain.
\end{proof}

To close this subsection, we give a characterization of Pr\"{u}fer $v$-multiplication domains in terms of $w$-torsion-free modules. Recall that a domain $R$ is a \textit{Pr\"{u}fer $v$-multiplication domain} (P$v$MD) if the monoid of finite-type $v$-ideals of $R$ forms a group under $v$-multiplication, equivalently, if $R_{\mathfrak{m}}$ is a valuation domain for each $\mathfrak{m}\in\wmax(R)$ (see, for example, \cite[Theorem 7.5.9]{WK24}). The notion of P$v$MDs comes from multiplicative ideal theory, and
various ideal-theoretic properties of them have been considered by
many authors. Although these rings were studied by Krull, more
recent interest in them was sparked by Griffin's paper \cite{Griffin67}.
Examples of P$v$MDs are Pr\"ufer domains, Krull domains, GCD
domains, integrally closed coherent domains, etc.

\begin{proposition}\label{w-torsion-free=w-flat}
	Let $R$ be a domain. Then $R$ is a P$v$MD if and only if $\mathcal{TF}_w=\mathcal{FL}_w$.
\end{proposition}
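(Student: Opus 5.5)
We will prove both directions by passing to localizations at maximal $w$-ideals. The tools are Proposition \ref{some characterizations of w-torsion-free modules over domains}, which identifies $w$-torsion-free modules as those $M$ with $M_{\mathfrak{m}}$ torsion-free over $R_{\mathfrak{m}}$ for all $\mathfrak{m}\in\wmax(R)$, and the characterization (\cite[Theorem 3.3]{KW14}) of $w$-flat modules as those with $M_{\mathfrak{m}}$ flat over $R_{\mathfrak{m}}$ for all $\mathfrak{m}\in\wmax(R)$. We also use the fact that $R$ is a P$v$MD if and only if every $R_{\mathfrak{m}}$, $\mathfrak{m}\in\wmax(R)$, is a valuation domain.

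\emph{Necessity.} The inclusion $\mathcal{FL}_w\subseteq\mathcal{TF}_w$ holds over any ring, so it suffices to show $\mathcal{TF}_w\subseteq\mathcal{FL}_w$. Let $M$ be $w$-torsion-free and fix $\mathfrak{m}\in\wmax(R)$. Then $M_{\mathfrak{m}}$ is torsion-free over $R_{\mathfrak{m}}$. Since $R$ is a P$v$MD, $R_{\mathfrak{m}}$ is a valuation domain, and over a valuation domain every torsion-free module is flat. Hence $M_{\mathfrak{m}}$ is flat for every $\mathfrak{m}$, so $M$ is $w$-flat.

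\emph{Sufficiency.} Assume $\mathcal{TF}_w=\mathcal{FL}_w$ and fix $\mathfrak{m}\in\wmax(R)$. We want to show that $R_{\mathfrak{m}}$ is a valuation domain. It is enough to show that every torsion-free $R_{\mathfrak{m}}$-module $N$ is flat over $R_{\mathfrak{m}}$: applying this to the ideals of $R_{\mathfrak{m}}$ makes $R_{\mathfrak{m}}$ a local Pr\"ufer domain, hence a valuation domain. So let $N$ be a torsion-free $R_{\mathfrak{m}}$-module and regard it as an $R$-module. It is torsion-free over $R$: if $s\neq0$ in $R$, then $s$ maps to a nonzero element of the domain $R_{\mathfrak{m}}$ and so acts injectively on $N$. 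Thus $N\in\mathcal{TF}\subseteq\mathcal{TF}_w=\mathcal{FL}_w$, and therefore $N_{\mathfrak{m}}$ is flat over $R_{\mathfrak{m}}$. Since $N$ is already an $R_{\mathfrak{m}}$-module, $N_{\mathfrak{m}}\cong N$, so $N$ is flat over $R_{\mathfrak{m}}$.

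The only step that needs any care is this last identification $N_{\mathfrak{m}}\cong N$ and the torsion-freeness of $N$ over $R$, and both are routine. The actual weight of the proof lies in the cited facts: torsion-free modules over a valuation domain are flat, a domain is Pr\"ufer exactly when its torsion-free modules are flat, and P$v$MDs are characterized by their localizations at maximal $w$-ideals.
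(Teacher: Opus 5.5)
Your proof is correct. The necessity direction is exactly the paper's argument.

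For sufficiency, the paper also uses only the consequence $\mathcal{TF}\subseteq\mathcal{FL}_w$ of the hypothesis. It then cites \cite[Theorem 3.5]{WQ15}: a domain over which every torsion-free module is $w$-flat is a P$v$MD. You instead prove this implication directly by restriction of scalars along $R\to R_{\mathfrak{m}}$. A torsion-free $R_{\mathfrak{m}}$-module $N$ is torsion-free over $R$, hence $w$-flat, so $N\cong N_{\mathfrak{m}}$ is $R_{\mathfrak{m}}$-flat. Therefore $R_{\mathfrak{m}}$ is a local Pr\"ufer domain, i.e.\ a valuation domain.

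The paper's route is shorter but relies on an external reference for the real content. Yours is self-contained. It needs only the local characterizations of $w$-flat modules and of P$v$MDs, which the paper already quotes, plus the classical fact that a domain is Pr\"ufer exactly when its torsion-free modules are flat. In effect, you reprove the needed direction of \cite[Theorem 3.5]{WQ15}.
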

\begin{proof}
	Suppose that $R$ is a P$v$MD and let $M\in\mathcal{TF}_w$. Then for each maximal $w$-ideal $\mathfrak{m}$ of $R$, Proposition \ref{some characterizations of w-torsion-free modules over domains} says that $M_{\mathfrak{m}}$ is torsion-free over $R_{\mathfrak{m}}$.  Since $R_{\mathfrak{m}}$ is a valuation domain, $M_{\mathfrak{m}}$ is flat over $R_{\mathfrak{m}}$, and so $M\in\mathcal{FL}_w$. Thus, $\mathcal{TF}_w\subseteq\mathcal{FL}_w$, and hence $\mathcal{TF}_w=\mathcal{FL}_w$.
	
	Conversely, assume that $\mathcal{TF}_w=\mathcal{FL}_w$. Then $\mathcal{TF}\subseteq\mathcal{FL}_w$. Consequently, by \cite[Theorem 3.5]{WQ15}, $R$ is a P$v$MD.
\end{proof}

\subsection{$w$-Warfield-cotorsion modules}\quad

Now a modification of the definition of Warfield-cotorsion modules by replacing ``torsion-free'' by ``$w$-torsion-free'' gives the following definition.

\begin{definition}\label{the definition of w-W-cotorsion}
	Let $M$ be an $R$-module. Then $M$ is called a \textit{$w$-Warfield-cotorsion module} if $\Ext^1_R(F,M)=0$ for any $w$-torsion-free $R$-module $F$, i.e., if $M\in{\mathcal{TF}_w}^{\bot}$.
\end{definition}

Let $\mathcal{WC}_w$ denote the class of all $w$-Warfield-cotorsion $R$-modules. Then $\mathcal{WC}_w={\mathcal{TF}_w}^{\bot}$. Of course, $\mathcal{WC}_w\subseteq\mathcal{WC}$ and $\mathcal{WC}_w\subseteq\mathcal{EC}_w$.

Note that $\mathcal{TF}_w=\mathcal{S}^{\bot_w}$, where $\mathcal{S}=\{R/sR~|~s\in R^\times\}$. Hence, by Proposition \ref{w-Tor pair}(3), $\left(\mathcal{TF}_w,{\mathcal{TF}_w}^{\top_w}\right)$ is a $w$-$\Tor$-pair. Thus, the following result follows immediately from Proposition \ref{w-tor and cotor}(3).

\begin{proposition}\label{w-W-cotorsion pair}
	$\left(\mathcal{TF}_w,\mathcal{WC}_w\right)$ a complete and closed cotorsion pair, and hence it is perfect.
\end{proposition}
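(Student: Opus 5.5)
The plan is to reduce the statement to Proposition \ref{w-tor and cotor}(3). That requires showing that $\mathcal{TF}_w$ is the first component of a $w$-$\Tor$-pair, and that $\mathcal{WC}_w={\mathcal{TF}_w}^{\bot}$ is its right orthogonal class.

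First we check that $\mathcal{TF}_w=\mathcal{S}^{\top_w}$, where $\mathcal{S}=\{R/sR~|~s\in R^\times\}$. The statement preceding the proposition writes $\mathcal{S}^{\bot_w}$, which should read $\mathcal{S}^{\top_w}$ here. By Definition \ref{the definition of w-torsion-free}, $M\in\mathcal{TF}_w$ exactly when $\Tor^R_1(R/sR,M)$ is $\GV$-torsion for all $s\in R^\times$. Since $R$ is commutative, $\Tor^R_1(R/sR,M)\cong\Tor^R_1(M,R/sR)$, so both one-sided conditions in the definition of $\top_w$ coincide. Hence $\mathcal{TF}_w=\mathcal{S}^{\top_w}$.

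Next, Proposition \ref{w-Tor pair}(3) tells us that $\bigl(\mathcal{S}^{\top_w},(\mathcal{S}^{\top_w})^{\top_w}\bigr)=\bigl(\mathcal{TF}_w,{\mathcal{TF}_w}^{\top_w}\bigr)$ is a $w$-$\Tor$-pair. This step only uses the Galois-connection properties (1)–(2) of that proposition, which follow formally from the fact that $\top_w$ reverses inclusions.

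Now we apply Proposition \ref{w-tor and cotor}(3) with $\A=\mathcal{TF}_w$ and $\B={\mathcal{TF}_w}^{\top_w}$. It gives that $(\mathcal{TF}_w,{\mathcal{TF}_w}^{\bot})$ is a complete and closed cotorsion pair, hence perfect. Since ${\mathcal{TF}_w}^{\bot}=\mathcal{WC}_w$ by Definition \ref{the definition of w-W-cotorsion}, this is exactly the claim.

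The only point needing care is the identification $\mathcal{TF}_w=\mathcal{S}^{\top_w}$, in particular the symmetry of $\Tor$. The heavy lifting was already done in Proposition \ref{w-tor and cotor}: that $\A={}^{\bot}\C$ with $\C$ consisting of pure-injective modules $B^\dag$, together with \cite[Theorem 3.2.9]{GT06}.
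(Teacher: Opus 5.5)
Your proposal is correct and is the paper's proof. Both identify $\mathcal{TF}_w$ with $\mathcal{S}^{\top_w}$ for $\mathcal{S}=\{R/sR~|~s\in R^\times\}$ (you are right that the paper's $\mathcal{S}^{\bot_w}$ is a typo), obtain the $w$-$\Tor$-pair $\bigl(\mathcal{TF}_w,{\mathcal{TF}_w}^{\top_w}\bigr)$ from Proposition \ref{w-Tor pair}(3), and conclude with Proposition \ref{w-tor and cotor}(3).
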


Next, we characterize rings over which the classes $\mathcal{WC}_w$ and $\mathcal{WC}$ (respectively, $\mathcal{WC}_w$ and $\mathcal{EC}_w$) coincide.

\begin{proposition}\label{w-W-cotrosion=W-cotorsion}
	The following statements are equivalent for a ring $R$.
	\begin{enumerate}
		\item $\mathcal{WC}_w=\mathcal{WC}$.
		\item $\mathcal{TF}_w=\mathcal{TF}$.
	\end{enumerate}
	Furthermore, if $R$ is a domain, then each of the above statements is equivalent to 
	\begin{enumerate}
		\setcounter{enumi}{2}
		\item $R$ is a DW-domain.
	\end{enumerate}
\end{proposition}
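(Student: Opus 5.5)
The plan is to prove $(1)\Leftrightarrow(2)$ through the cotorsion pairs $(\mathcal{TF}_w,\mathcal{WC}_w)$ and $(\mathcal{TF},\mathcal{WC})$. The domain case then follows from Proposition \ref{w-torsion-free=torsion-free}.

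For $(2)\Rightarrow(1)$ there is nothing to do. Both classes are defined as right $\Ext$-orthogonals, namely $\mathcal{WC}_w={\mathcal{TF}_w}^{\bot}$ and $\mathcal{WC}={\mathcal{TF}}^{\bot}$, so equal left classes give equal right classes.

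For $(1)\Rightarrow(2)$ I use that both pairs are cotorsion pairs, so each left class is the left orthogonal of its right class. By Proposition \ref{w-W-cotorsion pair}, $(\mathcal{TF}_w,\mathcal{WC}_w)$ is a cotorsion pair, hence $\mathcal{TF}_w={}^{\bot}\mathcal{WC}_w$. For the classical pair I observe that $\mathcal{TF}=\mathcal{S}^{\top}$ with $\mathcal{S}=\{R/sR~|~s\in R^\times\}$, since $\Tor$ over a commutative ring is symmetric. Hence $(\mathcal{TF},\mathcal{TF}^{\top})$ is a $\Tor$-pair, by the same formal argument as Proposition \ref{w-Tor pair} with ``$\GV$-torsion'' replaced by ``zero''. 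The Pontryagin-type duality argument of Proposition \ref{w-tor and cotor}(2), with character modules $\Hom_{\mathbb{Z}}(-,\mathbb{Q}/\mathbb{Z})$ in place of $(-)^\dag$, then gives $\mathcal{TF}={}^{\bot}({\mathcal{TF}}^{\bot})={}^{\bot}\mathcal{WC}$. Alternatively, this is the standard fact that $(\mathcal{TF},\mathcal{WC})$ is a cotorsion pair, \cite[Chapter 4]{GT06}. In either case, if $\mathcal{WC}_w=\mathcal{WC}$ then $\mathcal{TF}_w={}^{\bot}\mathcal{WC}_w={}^{\bot}\mathcal{WC}=\mathcal{TF}$.

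Finally, assume $R$ is a domain. Proposition \ref{w-torsion-free=torsion-free} gives $(2)\Rightarrow(3)$. For $(3)\Rightarrow(2)$: a DW-domain has $\GV(R)=\{R\}$, so a $\GV$-torsion module is zero and the definitions of $w$-torsion-free and torsion-free coincide.

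The only step that is more than bookkeeping is confirming that $\mathcal{TF}$ is exactly the left class of a cotorsion pair. I will either cite it from \cite{GT06} or rederive it through the $\Tor$-pair and character-module argument, where the needed fact is that a module is zero if its character module vanishes.
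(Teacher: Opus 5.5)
Your proposal is correct and follows the paper's proof. The paper uses Proposition \ref{w-torsion-free=torsion-free} to reduce the domain case to (1)$\Leftrightarrow$(2), and then calls that equivalence ``trivial, by comparing the cotorsion pairs'' $(\mathcal{TF}_w,\mathcal{WC}_w)$ and $(\mathcal{TF},\mathcal{WC})$. Your character-module argument, showing $\mathcal{TF}={}^{\bot}\mathcal{WC}$ over an arbitrary commutative ring, is valid and supplies the detail the paper leaves unstated.
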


\begin{proof}
	By Proposition \ref{w-torsion-free=torsion-free}, it suffices to prove the equivalence of (1) and (2). But this is trivial, by comparing the cotorsion pairs.
\end{proof}

\begin{proposition}\label{w-W-cotorsion=w-E-cotorsion}
	The following statements are equivalent for a ring $R$.
	\begin{enumerate}
		\item $\mathcal{WC}_w=\mathcal{EC}_w$.
		\item $\mathcal{TF}_w=\mathcal{FL}_w$
	\end{enumerate}
	Furthermore, if $R$ is a domain, then each of the above statements is equivalent to 
	\begin{enumerate}
		\setcounter{enumi}{2}
		\item $R$ is a P$v$MD.
	\end{enumerate}
\end{proposition}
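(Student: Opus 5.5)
The plan is to compare the two cotorsion pairs, in the same way as for Proposition \ref{w-W-cotrosion=W-cotorsion}, and then to invoke Proposition \ref{w-torsion-free=w-flat} for the domain case.

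By Proposition \ref{w-W-cotorsion pair}, $(\mathcal{TF}_w,\mathcal{WC}_w)$ is a cotorsion pair, and by Proposition \ref{w-E-cotorsion pair}, $(\mathcal{FL}_w,\mathcal{EC}_w)$ is a cotorsion pair.

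For $(1)\Rightarrow(2)$, I will use that the left class of a cotorsion pair is recovered from the right class:
\[\mathcal{TF}_w=\rule{0pt}{10pt}^{\bot}\mathcal{WC}_w=\rule{0pt}{10pt}^{\bot}\mathcal{EC}_w=\mathcal{FL}_w.\]

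For $(2)\Rightarrow(1)$, I will use the definitions directly:
\[\mathcal{WC}_w={\mathcal{TF}_w}^{\bot}={\mathcal{FL}_w}^{\bot}=\mathcal{EC}_w.\]

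When $R$ is a domain, the equivalence of (2) and (3) is exactly Proposition \ref{w-torsion-free=w-flat}. That result already carries the real content: in the forward direction it uses that torsion-free modules over valuation domains are flat, applied at each maximal $w$-ideal via Proposition \ref{some characterizations of w-torsion-free modules over domains}. In the converse direction it uses $\mathcal{TF}\subseteq\mathcal{TF}_w=\mathcal{FL}_w$ together with \cite[Theorem 3.5]{WQ15}.

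I do not expect a genuine obstacle here. The only point to check is that both pairs really are cotorsion pairs in the classical sense, with $\bot$ and not $\bot_w$. This holds because $(\mathcal{TF}_w,{\mathcal{TF}_w}^{\top_w})$ and $(\mathcal{FL}_w,\m)$ are $w$-$\Tor$-pairs, so Proposition \ref{w-tor and cotor}(3) applies to both.
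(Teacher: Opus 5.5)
Your proposal is correct and takes the same approach as the paper. The paper also gets $(1)\Leftrightarrow(2)$ by comparing the cotorsion pairs $(\mathcal{TF}_w,\mathcal{WC}_w)$ and $(\mathcal{FL}_w,\mathcal{EC}_w)$, and then cites Proposition \ref{w-torsion-free=w-flat} for the domain case.
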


\begin{proof}
	To apply Proposition \ref{w-torsion-free=w-flat} we need only show that (1) and (2) are equivalent. However, by comparing the cotorsion pairs, it is clear.
\end{proof}

\subsection{$\GV$-Warfield-cotorsion modules}\quad

We now consider another class of Warfield-cotorsion modules relative to $\tau_w$.

\begin{definition}\label{the definition of GV-W-cotorsion}
	Let $M$ be an $R$-module. Then $M$ is said to be a \textit{$\GV$-Warfield-cotorsion module} if $\Ext^1_R(F,M)$ is a $\GV$-torsion module for any $w$-torsion-free $R$-module $F$, i.e., if $M\in{\mathcal{TF}_w}^{\bot_w}$.
\end{definition}

We shall denote by $\mathcal{WC}_{\GV}$ the class of all $\GV$-Warfield-cotorsion $R$-modules. Then $\mathcal{WC}_{\GV}={\mathcal{TF}_w}^{\bot_w}$. Obviously, $\mathcal{WC}_{w}\subseteq \mathcal{WC}_{\GV}$ and $\mathcal{SP}_{iw}\subseteq \mathcal{WC}_{\GV}$. 

\begin{proposition}\label{GV-W-w-cotorsion pair}
	$\left(\mathcal{TF}_w,\mathcal{WC}_{\GV}\right)$ is a $w$-cotorsion pair.
\end{proposition}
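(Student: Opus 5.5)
The plan is to mirror the derivation of Proposition \ref{GV-E-w-cotorsion pair} from Proposition \ref{w-tor and cotor}(4), with $(\mathcal{TF}_w,{\mathcal{TF}_w}^{\top_w})$ in the role of the $w$-$\Tor$-pair. As observed just before Proposition \ref{w-W-cotorsion pair}, put $\mathcal{S}=\{R/sR~|~s\in R^\times\}$. By Definition \ref{the definition of w-torsion-free} we have $\mathcal{TF}_w=\mathcal{S}^{\top_w}$; the symmetry of $\Tor$ over a commutative ring makes the two-sided condition in the definition of $\top_w$ automatic. Proposition \ref{w-Tor pair}(3) then shows that
\[(\A,\B):=\left(\mathcal{TF}_w,\left(\mathcal{TF}_w\right)^{\top_w}\right)\]
is a $w$-$\Tor$-pair.

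Next I apply Proposition \ref{w-tor and cotor}(4) to this pair. It gives
\[\mathcal{TF}_w=\rule{0pt}{10pt}^{\bot_w}\left({\mathcal{TF}_w}^{\bot_w}\right),\]
so $\left(\mathcal{TF}_w,{\mathcal{TF}_w}^{\bot_w}\right)$ is a $w$-cotorsion pair. Since $\mathcal{WC}_{\GV}={\mathcal{TF}_w}^{\bot_w}$ by Definition \ref{the definition of GV-W-cotorsion}, the two conditions $\A={}^{\bot_w}\B$ and $\B=\A^{\bot_w}$ of Definition \ref{the definition of w-cotorsion-pairs} both hold: the second by definition and the first by the display. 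This proves the statement.

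There is essentially no obstacle. The only point to check is that the identification $\mathcal{TF}_w=\mathcal{S}^{\top_w}$ is correct, and in particular that it uses $\top_w$ rather than the $\bot_w$ written in the text. This follows at once from Definition \ref{the definition of w-torsion-free}, since the defining condition is $\Tor^R_1(R/sR,M)$ being $\GV$-torsion for all $s\in R^\times$.

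Alternatively, one can avoid the $\Tor$-pair. Let $\C=\{B^\dag~|~B\in\B\}$ as in Proposition \ref{w-tor and cotor}. Then $\C\subseteq{\mathcal{TF}_w}^{\bot_w}$, so
\[\mathcal{TF}_w\subseteq{}^{\bot_w}\left({\mathcal{TF}_w}^{\bot_w}\right)\subseteq{}^{\bot_w}\C\subseteq\mathcal{TF}_w.\]
The last inclusion uses the isomorphism $\Ext^1_R(A,B^\dag)\cong\Tor^R_1(A,B)^\dag$, the fact that $\GV$-torsion-free modules which are $\GV$-torsion are zero, and Lemma \ref{GV-torsion module}. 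This is exactly the argument of part (4) of that proposition, specialized to this pair.
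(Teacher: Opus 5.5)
Your proposal is correct and matches the paper's proof: the paper likewise takes $\left(\mathcal{TF}_w,{\mathcal{TF}_w}^{\top_w}\right)$ as a $w$-$\Tor$-pair (via Proposition~\ref{w-Tor pair}(3) with $\mathcal{S}=\{R/sR~|~s\in R^\times\}$) and then applies Proposition~\ref{w-tor and cotor}(4). You are also right that the identification should read $\mathcal{TF}_w=\mathcal{S}^{\top_w}$, not the $\mathcal{S}^{\bot_w}$ printed in the text.
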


\begin{proof}
	Since $\left(\mathcal{TF}_w,{\mathcal{TF}_w}^{\top_w}\right)$ is a $w$-$\Tor$-pair, the proof follows immediately Proposition \ref{w-tor and cotor}(4).
\end{proof}

Warfield \cite{Warfield70} characterized the Warfield-cotorsion modules over domains as Matlis-cotorsion modules of injective dimension $\leqslant 1$; see for example \cite[Chapter XIII, Theorem 8.3]{FS01}. The next proposition is the $w$-theoretic analogue of this characterization. For this we recall the $iw$-split dimension of modules from \cite{Wu24}. 

Let $N$ be an $R$-module. Then $\iwsd_RN\leqslant n$ ($\iwsd$ abbreviates \textit{$iw$-split dimension}) if there is an exact sequence (called a finite \textit{$iw$-split coresolution}) of $R$-modules
	\[0\rightarrow N\rightarrow E_0\rightarrow E_1\rightarrow\cdots\rightarrow E_n\rightarrow 0,\]
where each $E_i$ is $iw$-split, $i=1,2,\cdots,n$. If no such finite $iw$-split coresolution exists, then $\iwsd_RN=\infty$; otherwise, $\iwsd_RN=n$ if $n$ is the length of a shortest $iw$-split coresolution of $N$. It is shown that $\iwsd_RN\leqslant n$ if and only if $\Ext_R^{n+1}(M,N)$ is a $\GV$-torsion module for all $R$-modules $M$. The \textit{$w$-split dimension}, $\wsd$, is defined dually using \textit{$w$-split resolutions}.

\begin{proposition}\label{gv-w-cotor}
	Let $R$ be a domain. If $M$ is a $\GV$-Warfield-cotorsion $R$-module, then it is a Matlis-cotorsion module with $\iwsd_RM\leqslant 1$. The converse holds provided $M$ is a $w$-module.
\end{proposition}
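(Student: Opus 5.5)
For the forward implication I will treat the Matlis-cotorsion part and the dimension bound separately. For the converse I will split a $w$-torsion-free module into its classical torsion part and a torsion-free quotient, and run Warfield's classical argument on the quotient.

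\emph{Necessity.} Let $M\in\mathcal{WC}_{\GV}={\mathcal{TF}_w}^{\bot_w}$.
\begin{enumerate}
\item Since $Q$ is flat, it is torsion-free and hence $w$-torsion-free. So $\Ext^1_R(Q,M)$ is $\GV$-torsion, i.e. $M\in Q^{\bot_w}$.
\item By Proposition \ref{GV-E-cotorsion is M-cotorsion}, $Q^{\bot_w}=\mathcal{MC}$ over a domain, so $M$ is Matlis-cotorsion.
\item For $\iwsd_RM\leqslant 1$ it suffices to show that $\Ext^2_R(N,M)$ is $\GV$-torsion for every $R$-module $N$. Take an exact sequence $0\rightarrow L\rightarrow P\rightarrow N\rightarrow 0$ with $P$ projective.
\item Then $\Ext^2_R(N,M)\cong\Ext^1_R(L,M)$ by dimension shifting.
\item $L$ is a submodule of a free module over a domain, so it is torsion-free, hence $w$-torsion-free. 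Therefore $\Ext^1_R(L,M)$ is $\GV$-torsion.
\end{enumerate}

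\emph{Sufficiency.} Let $M$ be a Matlis-cotorsion $w$-module with $\iwsd_RM\leqslant1$, and let $F$ be $w$-torsion-free.

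\textbf{Step 1.} Let $t(F)$ be the classical torsion submodule of $F$. By Proposition \ref{some characterizations of w-torsion-free modules}(2), every element of $t(F)$ is annihilated by some $\GV$-ideal, so $t(F)$ is $\GV$-torsion. Also $G:=F/t(F)$ is torsion-free. From the exact sequence
\[\Ext^1_R(G,M)\rightarrow\Ext^1_R(F,M)\rightarrow\Ext^1_R(t(F),M),\]
and since $\tau_w$ is hereditary (so $\GV$-torsion modules are closed under submodules, quotients and extensions), it suffices to show that the two outer terms are $\GV$-torsion.

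\textbf{Step 2: the key step, $\Ext^1_R(T,M)=0$ for every $\GV$-torsion $T$ when $M$ is a $w$-module.} Use the sequence $0\rightarrow M\rightarrow E(M)\rightarrow E(M)/M\rightarrow0$, which gives an exact sequence
\[\Hom_R(T,E(M)/M)\rightarrow\Ext^1_R(T,M)\rightarrow\Ext^1_R(T,E(M))=0.\]
\begin{enumerate}
\item Since $T$ is $\GV$-torsion and $\GV$-torsion modules are closed under homomorphic images, any map $T\rightarrow E(M)/M$ lands in $\tor(E(M)/M)$.
\item By the preliminaries, $\tor(E(M)/M)=M_w/M$.
\item Since $M$ is a $w$-module, $M_w=M$, so $M_w/M=0$.
\end{enumerate}
Hence $\Hom_R(T,E(M)/M)=0$, and therefore $\Ext^1_R(T,M)=0$. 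Applied to $T=t(F)$, this handles the right-hand term in Step 1.

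\textbf{Step 3: the torsion-free part.} For torsion-free $G$ over a domain there is an exact sequence
\[0\rightarrow G\rightarrow Q\otimes_RG\rightarrow K\otimes_RG\rightarrow0,\]
with $K=Q/R$. This yields
\[\Ext^1_R(Q\otimes_RG,M)\rightarrow\Ext^1_R(G,M)\rightarrow\Ext^2_R(K\otimes_RG,M).\]
\begin{enumerate}
\item $Q\otimes_RG$ is a $Q$-vector space, so it is isomorphic to $Q^{(I)}$ for some index set $I$.
\item Hence $\Ext^1_R(Q\otimes_RG,M)\cong\prod\Ext^1_R(Q,M)=0$, because $M$ is Matlis-cotorsion.
\item $\Ext^2_R(K\otimes_RG,M)$ is $\GV$-torsion because $\iwsd_RM\leqslant1$.
\end{enumerate}
Therefore $\Ext^1_R(G,M)$ embeds in a $\GV$-torsion module and is itself $\GV$-torsion. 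With Step 1 this shows $M\in{\mathcal{TF}_w}^{\bot_w}=\mathcal{WC}_{\GV}$.

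I expect Step 2 to be the main obstacle. It is the only place where the $w$-module hypothesis enters, and the argument above handles it directly via the $w$-envelope, without needing an external characterization of $w$-modules.
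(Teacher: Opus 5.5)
Your proof is correct and follows essentially the same route as the paper. Necessity goes via $Q$ and dimension shifting along a projective presentation. Sufficiency splits off a $\GV$-torsion submodule, kills its contribution with $\Ext^1_R(T,M)=0$ for the $w$-module $M$, and embeds the torsion-free quotient into a $Q$-vector space so that Matlis-cotorsion and $\iwsd_RM\leqslant 1$ apply. The differences are cosmetic. Your classical torsion submodule $t(F)$ coincides with $\tor(F)$ for $w$-torsion-free $F$ over a domain, so using it only lets you skip the paper's appeal to closure of $\mathcal{TF}_w$ under $w$-isomorphisms and to Corollary \ref{w-torsion-free and GV-torsion-free}. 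You also prove the vanishing of $\Ext^1_R(T,M)$ via the $w$-envelope, where the paper simply asserts it.
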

\begin{proof}
	If $M$ is a $\GV$-Warfield-cotorsion $R$-module, then it is $\GV$-Enochs-cotorsion and hence a Matlis-cotorsion module by Proposition \ref{GV-E-cotorsion is M-cotorsion}. Now, for any $R$-module $N$, consider the exact sequence of $R$-modules $0\rightarrow L\rightarrow F\rightarrow N\rightarrow 0$ with $F$ free. Then $\Ext^2_R(N,M)\cong\Ext^1_R(L,M)$ is a $\GV$-torsion module, since $L\in\mathcal{TF}\subseteq\mathcal{TF}_w$. So $\iwsd_RM\leqslant 1$.
	
	Conversely, suppose $M$ is a Matlis-cotorsion $w$-module with $\iwsd_RM\leqslant 1$ and let $X$ be any $w$-torsion-free $R$-module. 
	
	First we prove that $\Ext^1_R(X,M)$ is a $\GV$-torsion module in case $X$ is a $\GV$-torsion-free. In fact, by Corollary \ref{w-torsion-free and GV-torsion-free}, $X$ is torsion-free. Thus, the injective envelop $E$ of $X$ is isomorphic to a direct sum of copies of $Q$. Applying $\Hom_R(-,M)$ to the exact sequence $0\rightarrow X\rightarrow E\rightarrow E/X\rightarrow 0$, we get the sequence
		\[0=\Ext^1_R(E,M)\rightarrow\Ext^1_R(X,M)\rightarrow\Ext^2_R(E/X,M)\]
	is exact, where the first Ext vanishes because $M$ is Matlis-cotorsion and the last Ext is $\GV$-torsion by $\iwsd_RM\leqslant 1$. Consequently, $\Ext^1_R(X,M)$ is $\GV$-torsion.
	
	In the general case, write $T=\tor(X)$ and consider the exact sequence of $0\rightarrow T\rightarrow X\rightarrow X/T\rightarrow 0$, where $X/T$ is $\GV$-torsion-free. We drive the exact sequence
	\[\Ext^1_R(X/T,M)\rightarrow\Ext^1_R(X,M)\rightarrow\Ext^1_R(T,M)=0,\]
	where the last Ext vanishes since $M$ is a $w$-module and since $T$ is $\GV$-torsion. 
	Notice that $X/T$ is also $w$-torsion-free by the fact that $\mathcal{TF}_w$ is closed under $w$-isomorphisms. Hence, the $\GV$-torsion-free case above gives that $\Ext^1_R(X/T,M)$ is $\GV$-torsion, and so is therefore $\Ext^1_R(X,M)$.

	Thus, it follows that $M$ is a $\GV$-Warfield-cotorsion module.
\end{proof}

The following two results characterize the rings over which $\mathcal{WC}_{w}=\mathcal{WC}_{\GV}$ (respectively, $\mathcal{SP}_{iw}=\mathcal{WC}_{\GV}$). 

\begin{proposition}\label{GV-W-torsion=w-W-torsion}
	The following statements are equivalent for a ring $R$.
	\begin{enumerate}
		\item $\mathcal{WC}_{\GV}=\mathcal{WC}_w$.
		\item $\mathcal{SP}_{iw}\subseteq\mathcal{WC}_w$, i.e., every $iw$-split $R$-module is $w$-Warfield-cotorsion.
        \item $R$ is a DW-ring.
	\end{enumerate}
\end{proposition}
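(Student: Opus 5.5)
The cycle $(1)\Rightarrow(2)\Rightarrow(3)\Rightarrow(1)$ is proved by mimicking Theorem \ref{GV-E-torsion=w-E-torsion}. For $(1)\Rightarrow(2)$ we only need the inclusion $\mathcal{SP}_{iw}\subseteq\mathcal{WC}_{\GV}$ noted after Definition \ref{the definition of GV-W-cotorsion}; together with (1) this gives $\mathcal{SP}_{iw}\subseteq\mathcal{WC}_w$. For $(3)\Rightarrow(1)$, assume $R$ is a DW-ring. Then $\GV(R)=\{R\}$, so the only $\GV$-torsion module is $0$. Hence ``$\Ext^1_R(F,M)$ is $\GV$-torsion'' means $\Ext^1_R(F,M)=0$, and ${\mathcal{TF}_w}^{\bot_w}={\mathcal{TF}_w}^{\bot}$, i.e. $\mathcal{WC}_{\GV}=\mathcal{WC}_w$.

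The substantive step is $(2)\Rightarrow(3)$, and the plan is to run the argument of Theorem \ref{GV-E-torsion=w-E-torsion} again. Take $J\in\GV(R)$ and set $N=J/J^2$. Since $JN=0$, $N$ is $iw$-split by \cite[Theorem 2.10]{Wu24}, so by (2) $N$ is $w$-Warfield-cotorsion.

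The key observation is that $R/J$ is $\GV$-torsion, hence $w$-flat, hence $w$-torsion-free (recall $\mathcal{FL}_w\subseteq\mathcal{TF}_w$). Therefore $\Ext^1_R(R/J,N)=0$. Applying $\Hom_R(-,N)$ to $0\to J\to R\to R/J\to 0$ then shows that every homomorphism $J\to N$ extends to $R$. In particular the canonical epimorphism $f:J\to N$ extends to some $g:R\to N$.

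Write $g(1)=\bar b$ with $b\in J$. Then
\[
\bar b=f(b)=g(b)=b\,g(1)=\overline{b^2}=0,
\]
so $g=0$. Since $f=g|_J$ is surjective, $N=0$, that is, $J=J^2$.

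From $J=J^2$ with $J$ finitely generated, $J$ is generated by an idempotent and is therefore projective. Projective modules are $w$-modules, so $J=J_w=R$, exactly as in $(3)\Rightarrow(4)$ of Theorem \ref{perfect ring}. Hence $\GV(R)=\{R\}$ and $R$ is a DW-ring.

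I expect no genuine obstacle. The only points to verify are that $\GV$-torsion modules lie in $\mathcal{TF}_w$, which holds because they are $w$-flat, and that the $iw$-split criterion of \cite{Wu24} applies to $J/J^2$.
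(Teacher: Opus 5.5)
Your proof is correct and is essentially the paper's argument. The paper shortens $(2)\Rightarrow(3)$ by noting $\mathcal{WC}_w\subseteq\mathcal{EC}_w$, so that (2) gives condition (2) of Theorem \ref{GV-E-torsion=w-E-torsion}, which it then cites; you instead inline that theorem's $J/J^2$ argument, using $R/J\in\mathcal{TF}_w$ where the paper uses $R/J\in\mathcal{FL}_w$.
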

\begin{proof}
	$(1)\Rightarrow (2)$ This is clear, as $\mathcal{SP}_{iw}\subseteq\mathcal{WC}_{\GV}$.
	
	$(2)\Rightarrow (3)$ Since $\mathcal{WC}_w\subseteq \mathcal{EC}_w$, it follows from Theorem \ref{GV-E-torsion=w-E-torsion} that $R$ is a DW-ring.
	
	$(3)\Rightarrow (1)$ This is obvious.
\end{proof}

\begin{theorem}\label{GV-W-cotorsion=iw-split}
	The following statements are equivalent for a ring $R$.
	\begin{enumerate}
		\item $\mathcal{WC}_{\GV}=\mathcal{SP}_{iw}$.
		\item $\mathcal{TF}_w=\m$, i.e., every $R$-module is $w$-torsion-free.
		\item $\mathcal{WC}_{w}=\mathcal{I}_0$.
	\end{enumerate}
		Furthermore, if $R$ is a domain, then each of the above statements is equivalent to 
	\begin{enumerate}
		\setcounter{enumi}{3}
		\item $R$ is a field.
	\end{enumerate}
\end{theorem}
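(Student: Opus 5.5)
We would prove the cycle $(1)\Rightarrow(2)\Rightarrow(3)\Rightarrow(1)$ and then treat the domain case separately. For $(1)\Rightarrow(2)$, apply the $w$-cotorsion pair of Proposition \ref{GV-W-w-cotorsion pair}. Together with Example \ref{examples of w-cotorsion pairs}, this gives
\[\mathcal{TF}_w=\rule{0pt}{10pt}^{\bot_w}\mathcal{WC}_{\GV}=\rule{0pt}{10pt}^{\bot_w}\mathcal{SP}_{iw}=\m .\]
This is exactly the argument used for $(2)\Rightarrow(3)$ in Theorem \ref{strong GV-E-cotorsion=iw-split}.

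For $(2)\Rightarrow(3)$, if $\mathcal{TF}_w=\m$, then $\mathcal{WC}_w={\mathcal{TF}_w}^{\bot}=\m^{\bot}=\mathcal{I}_0$. For $(3)\Rightarrow(2)$, use the cotorsion pair $(\mathcal{TF}_w,\mathcal{WC}_w)$ of Proposition \ref{w-W-cotorsion pair}, which gives $\mathcal{TF}_w=\rule{0pt}{0pt}^{\bot}\mathcal{WC}_w=\rule{0pt}{0pt}^{\bot}\mathcal{I}_0=\m$. For $(2)\Rightarrow(1)$, we have $\mathcal{WC}_{\GV}={\mathcal{TF}_w}^{\bot_w}=\m^{\bot_w}$. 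By the characterization of $iw$-split modules in \cite[Theorem 2.3]{Wu24}, $\m^{\bot_w}$ is precisely $\mathcal{SP}_{iw}$. So (1), (2) and (3) are equivalent.

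Now let $R$ be a domain. For $(4)\Rightarrow(2)$: if $R$ is a field, then $R^{\times}=R\setminus\{0\}$ consists of units. Hence $R/sR=0$ for every $s\in R^\times$, and every module is torsion-free, in particular $w$-torsion-free.

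The substantive direction is $(2)\Rightarrow(4)$. Take a nonzero nonunit $s\in R$; we want a contradiction. Apply (2) to $M=R/sR$. Then $\Tor^R_1(R/sR,R/sR)\cong\{x\in R/sR\mid sx=0\}=R/sR$ is $\GV$-torsion. So the element $\bar 1$ is killed by some $J\in\GV(R)$, which means $J\subseteq sR$. Since $J$ is a $\GV$-ideal, $sR$ contains a $\GV$-ideal, and hence $(sR)_w=R$; explicitly, $\Hom_R(R/J,R)=0$ forces $J^{-1}=R$. But $sR$ is an invertible, hence divisorial, ideal and therefore a $w$-ideal. This gives $sR=(sR)_w=R$, so $s$ is a unit, a contradiction. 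Thus every nonzero element is a unit and $R$ is a field.

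The main point to check is that principal ideals of a domain are $w$-ideals, equivalently that $J\subseteq sR$ with $J\in\GV(R)$ forces $sR=R$. A direct argument: the map $R\to R$ given by $r\mapsto r/s$ is well defined on $J$, since $J\subseteq sR$. Because $J^*\cong R$ via $\varphi$, this map is multiplication by some $c\in R$. Then $cs\cdot j=j$ for all $j\in J$, hence $cs=1$ as $J\neq0$ in a domain. So $s$ is a unit.
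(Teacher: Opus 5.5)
Your proof is correct. For (1)--(3) it matches the paper's argument. The paper just calls $(2)\Rightarrow(1)$ and $(2)\Rightarrow(3)$ clear; your justifications via \cite[Theorem 2.3]{Wu24} and $\m^{\bot}=\mathcal{I}_0$ are what is meant. You announce a cycle but actually prove $(1)\Leftrightarrow(2)\Leftrightarrow(3)$, which is equally fine.

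The genuine difference is in $(2)\Rightarrow(4)$.
\begin{itemize}
\item The paper applies Corollary \ref{w-torsion-free and GV-torsion-free} to the $\GV$-torsion-free modules $R/\mathfrak{m}$ with $\mathfrak{m}\in\wmax(R)$, and concludes $\wmax(R)=\{0\}$. It then invokes \cite[Proposition 2.6]{KLZ19} to get that $R$ is a DW-domain. Finally it uses $\mathcal{TF}=\mathcal{TF}_w=\m$ to conclude that $R$ is a field.
\item You instead test (2) on the single cyclic module $R/sR$. This gives a $\GV$-ideal $J\subseteq sR$. Surjectivity of $\varphi:R\to J^*$ then produces $c$ with $cs=1$.
\end{itemize}

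Your route is self-contained: it needs no external citation and no maximal $w$-ideals. It also proves more. Replace cancellation in a domain by ${\rm ann}_R(J)=0$, which is exactly $\Hom_R(R/J,R)=0$. Then the same computation works over an arbitrary ring and shows that (2) forces every non-zero-divisor to be a unit. So for any $R$, conditions (1)--(3) are equivalent to $R=Q$, and the field statement is the domain special case.

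One small slip: the parenthetical ``$\Hom_R(R/J,R)=0$ forces $J^{-1}=R$'' is false as stated. In a domain, that $\Hom$ vanishes for every nonzero ideal $J$, for instance $J=aR$ with $a$ a nonzero nonunit. What gives $J^{-1}=R$ is $\Ext^1_R(R/J,R)=0$, i.e., surjectivity of $\varphi$. Your final paragraph uses exactly that property, so the argument is unaffected.
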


\begin{proof}
	$(1)\Rightarrow (2)$ By Proposition \ref{GV-W-w-cotorsion pair}, we obtain
	\[\mathcal{TF}_w=\rule{0pt}{10pt}^{\bot_w}\mathcal{WC}_{\GV}=\rule{0pt}{10pt}^{\bot_w}\mathcal{SP}_{iw}=\m.\]
	
	$(2)\Rightarrow (1)$ and $(2)\Rightarrow (3)$ are both clear.
	
	$(3)\Rightarrow (2)$ By Proposition \ref{w-W-cotorsion pair}, we have
	\[\mathcal{TF}_w=\rule{0pt}{10pt}^{\bot}\mathcal{WC}_{w}=\rule{0pt}{10pt}^{\bot}\mathcal{I}_{0}=\m.\]
	
	Now, assume that $R$ is a domain. To finish the proof we only need to show that (2) implies (4). If (2) holds, then by Corollary \ref{w-torsion-free and GV-torsion-free} every $\GV$-torsion-free $R$-module is torsion-free. In particular, for any $\mathfrak{m}\in\wmax(R)$, we have that $R/\mathfrak{m}$ is a torsion-free $R$-module. However, if $\mathfrak{m}\neq 0$, then $R/\mathfrak{m}$ is also a torsion $R$-module, and so $R/\mathfrak{m}=0$, i.e., $\mathfrak{m}=R$, which is impossible. Thus, we obtain $\wmax(R)=\{0\}$. Hence, it follows from \cite[Proposition 2.6]{KLZ19} that $R$ is a DW-domain. From this, we see that $R$ is a domain over which every module is torsion-free, and consequently it is a field.
\end{proof}

Certainly, $\mathcal{WC}_{\GV}\subseteq\mathcal{EC}_{\GV}$. Moreover, we have:

\begin{theorem}\label{strongly GV-W-cotorsion=strongly GV-E-cotorsion}
	The following statements are equivalent for a ring $R$.
	\begin{enumerate}
		\item $\mathcal{WC}_{\GV}=\mathcal{EC}_{\GV}$.
		\item $\mathcal{TF}_w=\mathcal{FL}_w$
		\item $\mathcal{WC}_w=\mathcal{EC}_w$.
	\end{enumerate}
		Furthermore, if $R$ is a domain, then each of the above statements is equivalent to 
	\begin{enumerate}
		\setcounter{enumi}{3}
		\item $R$ is a P$v$MD.
	\end{enumerate}
\end{theorem}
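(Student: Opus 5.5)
The plan is to reduce everything to the equality of the left-hand classes, using the pairs already established. The key facts are these. $(\mathcal{FL}_w,\mathcal{EC}_{\GV})$ is a $w$-cotorsion pair by Proposition \ref{GV-E-w-cotorsion pair}. $(\mathcal{TF}_w,\mathcal{WC}_{\GV})$ is a $w$-cotorsion pair by Proposition \ref{GV-W-w-cotorsion pair}. The equivalence $(2)\Leftrightarrow(3)$, together with the equivalence with (4) in the domain case, is exactly Proposition \ref{w-W-cotorsion=w-E-cotorsion}, whose domain part rests on Proposition \ref{w-torsion-free=w-flat}. So the only new content is $(1)\Leftrightarrow(2)$.

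For $(2)\Rightarrow(1)$, assume $\mathcal{TF}_w=\mathcal{FL}_w$. Then
\[\mathcal{WC}_{\GV}={\mathcal{TF}_w}^{\bot_w}={\mathcal{FL}_w}^{\bot_w}=\mathcal{EC}_{\GV},\]
directly from the definitions.

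For $(1)\Rightarrow(2)$, assume $\mathcal{WC}_{\GV}=\mathcal{EC}_{\GV}$ and apply ${}^{\bot_w}(-)$ to both sides. Since both pairs are $w$-cotorsion pairs, the left classes are recovered from the right classes:
\[\mathcal{TF}_w={}^{\bot_w}\mathcal{WC}_{\GV}={}^{\bot_w}\mathcal{EC}_{\GV}=\mathcal{FL}_w.\]
Alternatively, one can route this through Proposition \ref{w-tor and cotor}(4) applied to the $w$-$\Tor$-pairs $(\mathcal{FL}_w,\m)$ and $(\mathcal{TF}_w,{\mathcal{TF}_w}^{\top_w})$, which gives $\A={}^{\bot_w}(\A^{\bot_w})$ in each case.

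I do not expect a genuine obstacle. The one point needing care is that the equality ${}^{\bot_w}(\A^{\bot_w})=\A$ really holds for both $\A=\mathcal{FL}_w$ and $\A=\mathcal{TF}_w$. This requires that each class is the left half of a $w$-$\Tor$-pair: for $\mathcal{TF}_w$ this comes from writing it as $\mathcal{S}^{\top_w}$ with $\mathcal{S}=\{R/sR\mid s\in R^\times\}$ and invoking Proposition \ref{w-Tor pair}(3). Once that is in place, the theorem follows by combining $(1)\Leftrightarrow(2)$ with Proposition \ref{w-W-cotorsion=w-E-cotorsion}.
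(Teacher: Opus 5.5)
Your proposal is correct and follows the paper's proof. You reduce to $(1)\Leftrightarrow(2)$ via Proposition \ref{w-W-cotorsion=w-E-cotorsion}, obtain $(1)\Rightarrow(2)$ by applying ${}^{\bot_w}(-)$ to the $w$-cotorsion pairs of Propositions \ref{GV-W-w-cotorsion pair} and \ref{GV-E-w-cotorsion pair}, and note that $(2)\Rightarrow(1)$ is immediate; incidentally, your description $\mathcal{TF}_w=\mathcal{S}^{\top_w}$, rather than the $\mathcal{S}^{\bot_w}$ printed in the paper, is the correct form for invoking Proposition \ref{w-Tor pair}(3).
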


\begin{proof}
	By Proposition \ref{w-W-cotorsion=w-E-cotorsion} it is enough to prove the equivalence of (1) and (2).
	
	$(1)\Rightarrow (2)$ Let $\mathcal{WC}_{\GV}=\mathcal{EC}_{\GV}$. Then it follows from Propositions \ref{GV-W-w-cotorsion pair} and \ref{GV-E-w-cotorsion pair} that
	\[\mathcal{TF}_w=\rule{0pt}{10pt}^{\bot_w}\mathcal{WC}_{\GV}=\rule{0pt}{10pt}^{\bot_w}\mathcal{EC}_{\GV}=\mathcal{FL}_w.\]
	
	$(2)\Rightarrow (1)$ This is obvious.
\end{proof}

\section{Strongly flat modules relative to $\tau_w$}

First, recall the following definition that is due to Trlifaj \cite[Section 2]{Trlifaj01}: an $R$-module $M$ is called \textit{strongly flat} if $\Ext^1_R(M,C)=0$ for all Matlis-cotorsion $R$-modules $C$. Let us denote by $\mathcal{SF}$ the class of all strongly flat $R$-modules. Then $\mathcal{SF}=\rule{0pt}{10pt}^{\bot}\mathcal{MC}$. It is known that $\mathcal{SF}\subseteq\mathcal{FL}$, i.e., every strongly flat module is flat, and that the pair $(\mathcal{SF},\mathcal{MC})$ the is the cotorsion pair generated by $Q$. In the lecture notes \cite[Section 5]{Trlifaj00}, Trlifaj raised the following question: when is the class of strongly flat modules over a domain a cover class?

This question was solved in the series of papers by Bazzoni and Salce \cite{BS02,BS04}, where it was proved that, for a domain $R$, $\mathcal{SF}$ is a cover class (i.e., all $R$-modules have strongly flat covers) if and only if $\mathcal{FL}=\mathcal{SF}$, and if and only if $R$ is an \textit{almost perfect domain}. The latter means that all proper factor rings of $R$ are perfect rings. 

Strongly flat modules over Matlis domains were studied in \cite{FST04,Lee15}. Moreover, Fuchs and Lee \cite{FL14} considered domains over which all the reduced strongly flat
modules are projective, and gave several characterizations of such domains.

This section is devoted to investigate strongly flat modules relative to $\tau_w$. We will see that almost perfect domains can also be characterized in terms of these modules.

\subsection{Definition and some properties of $w$-strongly flat modules}\quad

The first part of Proposition \ref{GV-E-cotorsion is M-cotorsion} motivates the following:

\begin{definition}\label{the definition of w-strongly flat}
	Let $M$ be an $R$-module. Then $M$ is said to be \textit{$w$-strongly flat} if $\Ext_R^1(M,C)$ is $\GV$-torsion for all Matlis-cotorsion $R$-modules $C$, that is, if $M\in\rule{0pt}{10pt}^{\bot_w}\mathcal{MC}$.
\end{definition}

Let us denote by $\mathcal{SF}_w$ the class of all $w$-strongly flat $R$-modules. Then $\mathcal{SF}_w=\rule{0pt}{10pt}^{\bot_w}\mathcal{MC}$. Of course, $\mathcal{SF}\subseteq\mathcal{SF}_w$, i.e., all strongly flat modules are $w$-strongly flat; and $\mathcal{SP}_w\subseteq\mathcal{SF}_w$, i.e., every $w$-split module is $w$-strongly flat. Moreover, for every $\GV$-ideal $J$ of $R$, $R/J$ is a $w$-strongly flat $R$-module, since $\Ext_R^1(R/J,-)$ is annihilate by $J$.

\begin{proposition}\label{strongly w-flat=strongly flat}
	Let $R$ be a ring. Then $\mathcal{SF}=\mathcal{SF}_w$ if and only if $R$ is a DW-ring.
\end{proposition}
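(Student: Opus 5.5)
The sufficiency is immediate. If $R$ is a DW-ring, then $\GV(R)=\{R\}$, so the only $\GV$-torsion module is $0$. Hence $\Ext^1_R(M,C)$ is $\GV$-torsion if and only if it vanishes, and therefore $\mathcal{SF}_w={}^{\bot_w}\mathcal{MC}={}^{\bot}\mathcal{MC}=\mathcal{SF}$.

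For the necessity, assume $\mathcal{SF}=\mathcal{SF}_w$. The plan follows the pattern of $(3)\Rightarrow(4)$ in Theorem \ref{perfect ring} and of Theorem \ref{GV-E-torsion=w-E-torsion}.

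Fix $J\in\GV(R)$. As noted just before the statement, $R/J$ is $w$-strongly flat, because $\Ext^1_R(R/J,-)$ is annihilated by $J$. The hypothesis then makes $R/J$ strongly flat, and every strongly flat module is flat, so $R/J$ is flat. Since $J$ is finitely generated, $R/J$ is a finitely presented flat module, hence projective. The sequence $0\to J\to R\to R/J\to 0$ therefore splits, and $J$ is a direct summand of $R$. Alternatively, flatness of $R/J$ gives $J\cap J=J^2$ via $\Tor^R_1(R/J,R/J)=J/J^2=0$; since $J$ is finitely generated, this means $J$ is generated by an idempotent.

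In either case $J$ is projective, hence a $w$-module, so $J=J_w=R$. This last step is argued exactly as in Theorem \ref{perfect ring}: $\Hom_R(R/J,R)=0$ together with $J=eR$ forces $1-e=0$. It follows that $\GV(R)=\{R\}$, so $R$ is a DW-ring.

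The only point needing care is the implication strongly flat $\Rightarrow$ flat for a general commutative ring $R$ with $Q$ the classical ring of quotients. This holds because $(\mathcal{SF},\mathcal{MC})$ is the cotorsion pair generated by the flat module $Q$. Every Enochs-cotorsion module is Matlis-cotorsion, since $Q$ is flat, so ${}^{\bot}\mathcal{MC}\subseteq{}^{\bot}\mathcal{EC}=\mathcal{FL}$. With that in place, the remaining steps are routine.
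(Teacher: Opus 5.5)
Your proof is correct and follows the paper's route: $R/J$ is $w$-strongly flat, hence strongly flat, hence flat, and this forces $J=R$. The only difference is the last step, where the paper simply notes that flat modules are $\GV$-torsion-free while $R/J$ is $\GV$-torsion, so $R/J=0$ at once; your detour through finite presentation, projectivity and idempotents is valid but unnecessary.
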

\begin{proof}
	The sufficiency is obvious. For the converse, assume $\mathcal{SF}=\mathcal{SF}_w$. For each $J\in\GV(R)$, we have $R/J\in\mathcal{SF}_w=\mathcal{SF}\subseteq\mathcal{FL}$, and so $R/J$ is $\GV$-torsion-free over $R$. However, $R/J$ is also a $\GV$-torsion $R$-module, so $R/J=0$, i.e., $J=R$. Thus, it follows that $R$ is a DW-ring.
\end{proof}

\begin{proposition}\label{strongly w-flat and w-split exact sequence}
	Let $0\rightarrow A\xrightarrow[]{f} B\xrightarrow[]{g} C\rightarrow 0$ be a $w$-split exact sequence of $R$-modules with $B\in\mathcal{SF}_w$. Then $C\in\mathcal{SF}_w$.
\end{proposition}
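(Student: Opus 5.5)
Fix a Matlis-cotorsion module $N$; we must show that $\Ext^1_R(C,N)$ is $\GV$-torsion. Apply $\Hom_R(-,N)$ to the given sequence to get the exact sequence
\[\Hom_R(B,N)\xrightarrow{f^*}\Hom_R(A,N)\xrightarrow{\delta}\Ext^1_R(C,N)\xrightarrow{g^*}\Ext^1_R(B,N).\]
Since $B\in\mathcal{SF}_w$, $\Ext^1_R(B,N)$ is $\GV$-torsion. Hence $\operatorname{Im}g^*$ is $\GV$-torsion, being a submodule of a $\GV$-torsion module. There is an exact sequence
\[0\rightarrow\operatorname{Im}\delta\rightarrow\Ext^1_R(C,N)\rightarrow\operatorname{Im}g^*\rightarrow 0,\]
and the class of $\GV$-torsion modules is closed under extensions, being the torsion class of the hereditary torsion theory $\tau_w$ (Proposition in the introduction). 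So it suffices to show that $\operatorname{Im}\delta\cong\operatorname{Coker}f^*$ is $\GV$-torsion.

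The key step uses the $w$-splitting data. Take $J=\langle d_1,\dots,d_n\rangle\in\GV(R)$ and $h_k\in\Hom_R(C,B)$ with $gh_k=d_k\mathbf{1}_C$. For each $k$, the map $d_k\mathbf{1}_B-h_kg$ satisfies $g(d_k\mathbf{1}_B-h_kg)=d_kg-d_kg=0$. It therefore factors through $f$: there is $\ell_k:B\to A$ with $f\ell_k=d_k\mathbf{1}_B-h_kg$. Then
\[f\ell_kf=d_kf-h_kgf=d_kf,\]
and since $f$ is monic, $\ell_kf=d_k\mathbf{1}_A$.

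Now let $\alpha\in\Hom_R(A,N)$. Then $d_k\alpha=\alpha\ell_kf=f^*(\alpha\ell_k)\in\operatorname{Im}f^*$ for every $k$, so $J\cdot\Hom_R(A,N)\subseteq\operatorname{Im}f^*$. Thus $J$ annihilates $\operatorname{Coker}f^*$, which is therefore $\GV$-torsion, and so $\Ext^1_R(C,N)$ is $\GV$-torsion.

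A more conceptual alternative would be to observe that the connecting map $\delta$ is killed by $J$ because the sequence becomes split after multiplying by each $d_k$. The explicit retraction-up-to-$d_k$ argument above avoids that and is the only real obstacle; it is elementary once the maps $\ell_k$ are constructed. Note that the hypothesis that $N$ is Matlis-cotorsion is used only through $B\in\mathcal{SF}_w$.
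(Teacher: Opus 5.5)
Your proof is correct and follows the paper's argument. Both use the same exact sequence $\Hom_R(B,N)\to\Hom_R(A,N)\to\Ext^1_R(C,N)\to\Ext^1_R(B,N)$, note that the last term is $\GV$-torsion, and reduce everything to showing that the cokernel of $f^*$ is $\GV$-torsion. The paper only cites an earlier result on $w$-split modules for that last step, whereas you carry it out explicitly by constructing maps $\ell_k$ with $\ell_k f=d_k\mathbf{1}_A$, which show that $J$ annihilates the cokernel.
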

\begin{proof}
	For any $M\in\mathcal{MC}$, we obtain the following exact sequence
	\[\Hom_R(B,M)\xrightarrow[]{~f^*~} \Hom_R(A,M)\rightarrow\Ext^1_R(C,M)\rightarrow\Ext^1_R(B,M).\]
	Since $B\in\mathcal{SF}_w$, $\Ext^1_R(B,M)$ is $\GV$-torsion. Hence, to complete the proof, we need only show that $\cok(f^*)$ is also $\GV$-torsion. But this is the same as in the proof of $(1)\Rightarrow (2)$ of \cite[Proposition 2.4]{WQ20}.
\end{proof}

\subsection{$w$-strongly flat modules over domains}\quad

Next, we investigate $w$-strongly flat modules over domains.

\begin{proposition}\label{strongly w-flat is w-flat}
	Let $R$ be a domain. Then $\mathcal{SF}_w\subseteq\mathcal{FL}_w$, i.e., all $w$-strongly flat $R$-modules are $w$-flat.
\end{proposition}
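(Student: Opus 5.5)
The plan is to show that $\mathcal{SF}_w \subseteq {}^{\bot_w}\mathcal{EC}_{\GV}$, and then conclude with Proposition \ref{GV-E-w-cotorsion pair}, which says that $(\mathcal{FL}_w,\mathcal{EC}_{\GV})$ is a $w$-cotorsion pair, so ${}^{\bot_w}\mathcal{EC}_{\GV}=\mathcal{FL}_w$. Since $R$ is a domain, Proposition \ref{GV-E-cotorsion is M-cotorsion} gives $\mathcal{EC}_{\GV}\subseteq\mathcal{MC}$. Taking left $w$-orthogonals reverses inclusions, so
\[\mathcal{SF}_w={}^{\bot_w}\mathcal{MC}\subseteq{}^{\bot_w}\mathcal{EC}_{\GV}=\mathcal{FL}_w.\]
That is the whole argument.

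Explicitly, let $M\in\mathcal{SF}_w$. For each $C\in\mathcal{EC}_{\GV}$ we have $C\in\mathcal{MC}$, so $\Ext^1_R(M,C)$ is $\GV$-torsion. Hence $M\in{}^{\bot_w}\mathcal{EC}_{\GV}$.

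It remains to confirm that ${}^{\bot_w}\mathcal{EC}_{\GV}=\mathcal{FL}_w$ holds exactly, not just one inclusion. This is part (4) of Proposition \ref{w-tor and cotor} applied to the $w$-$\Tor$-pair $(\mathcal{FL}_w,\m)$ of Example \ref{an example of a w-Tor-pair}: $\mathcal{FL}_w={}^{\bot_w}(\mathcal{FL}_w^{\bot_w})={}^{\bot_w}\mathcal{EC}_{\GV}$.

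There is no real obstacle. The only point needing care is the use of the domain hypothesis, which enters solely through Proposition \ref{GV-E-cotorsion is M-cotorsion}. That proposition relies on $\Ext^1_R(Q,-)$ being a $Q$-module, hence $\GV$-torsion-free; for the inclusion $\mathcal{EC}_{\GV}\subseteq\mathcal{MC}$ itself we only need that $Q$ is flat and that $Q^{\bot_w}=Q^{\bot}$.

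As a more hands-on alternative, one could use the dual modules from Proposition \ref{w-tor and cotor}(1): for every $R$-module $N$, $N^\dag$ lies in $\mathcal{FL}_w^{\bot}\subseteq\mathcal{EC}_{\GV}\subseteq\mathcal{MC}$. Then $\Tor^R_1(M,N)^\dag\cong\Ext^1_R(M,N^\dag)$ is both $\GV$-torsion and $\GV$-torsion-free, hence zero. By Lemma \ref{GV-torsion module}, $\Tor^R_1(M,N)$ is $\GV$-torsion, so $M$ is $w$-flat.
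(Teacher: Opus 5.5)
Your argument is correct and is exactly the paper's proof: $\mathcal{EC}_{\GV}\subseteq\mathcal{MC}$ by Proposition~\ref{GV-E-cotorsion is M-cotorsion}, and then $\mathcal{SF}_w={}^{\bot_w}\mathcal{MC}\subseteq{}^{\bot_w}\mathcal{EC}_{\GV}=\mathcal{FL}_w$ by Proposition~\ref{GV-E-w-cotorsion pair}. Your alternative argument via the modules $N^{\dag}$ is also valid; it simply unwinds Proposition~\ref{w-tor and cotor}(4).
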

\begin{proof}
	By Proposition \ref{GV-E-cotorsion is M-cotorsion}, $\mathcal{EC}_{\GV}\subseteq\mathcal{MC}$. Thus, Proposition \ref{GV-E-w-cotorsion pair} implies
	\[\mathcal{SF}_w=\rule{0pt}{10pt}^{\bot_w}\mathcal{MC}\subseteq\rule{0pt}{10pt}^{\bot_w}\mathcal{EC}_{\GV}=\mathcal{FL}_w.\]
\end{proof}

\begin{proposition}\label{GV-M-w-cotorsion pair}
	Let $R$ be a domain. Then the pair $\left(\mathcal{SF}_w,\mathcal{MC}\right)$ is the $w$-cotorsion pair generated by $Q$.
\end{proposition}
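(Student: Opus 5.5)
The claim has two parts. The first is $\{Q\}^{\bot_w}=\mathcal{MC}$. The second is $\rule{0pt}{10pt}^{\bot_w}\mathcal{MC}=\mathcal{SF}_w$. By Proposition \ref{w-cotorsion pair}(2), the $w$-cotorsion pair generated by $\{Q\}$ is $\left(\rule{0pt}{10pt}^{\bot_w}(\{Q\}^{\bot_w}),\{Q\}^{\bot_w}\right)$, so it suffices to identify these two classes.

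First, Proposition \ref{GV-E-cotorsion is M-cotorsion} already gives $\{Q\}^{\bot_w}=Q^{\bot_w}=\mathcal{MC}$ when $R$ is a domain. The key point there is that $\Ext^1_R(Q,M)$ is a $Q$-module. Since every nonzero element of a $\GV$-ideal is invertible in $Q$, such a module is $\GV$-torsion-free, so being $\GV$-torsion forces it to vanish.

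Second, substituting this into Proposition \ref{w-cotorsion pair}(2) gives $\rule{0pt}{10pt}^{\bot_w}(\{Q\}^{\bot_w})=\rule{0pt}{10pt}^{\bot_w}\mathcal{MC}$. This is $\mathcal{SF}_w$ by Definition \ref{the definition of w-strongly flat}.

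Third, Proposition \ref{w-cotorsion pair}(2) also gives $\left(\rule{0pt}{10pt}^{\bot_w}(\{Q\}^{\bot_w})\right)^{\bot_w}=\{Q\}^{\bot_w}$, that is, $\mathcal{SF}_w^{\bot_w}=\mathcal{MC}$. So $(\mathcal{SF}_w,\mathcal{MC})$ is a $w$-cotorsion pair, and it is the one generated by $Q$.

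For completeness I would verify the "routine" identity $(\rule{0pt}{10pt}^{\bot_w}(\mathcal{S}^{\bot_w}))^{\bot_w}=\mathcal{S}^{\bot_w}$ directly, since the paper leaves it as an exercise. For $\supseteq$: if $M\in\mathcal{S}^{\bot_w}$ and $A\in\rule{0pt}{10pt}^{\bot_w}(\mathcal{S}^{\bot_w})$, then $\Ext^1_R(A,M)$ is $\GV$-torsion by definition. For $\subseteq$: $\mathcal{S}\subseteq\rule{0pt}{10pt}^{\bot_w}(\mathcal{S}^{\bot_w})$, and the operation $(-)^{\bot_w}$ is inclusion-reversing. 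These are pure Galois-connection facts and need no input beyond the definitions.

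The only substantive step is the $\GV$-torsion-freeness of $Q$-modules in the first step, which is already established in Proposition \ref{GV-E-cotorsion is M-cotorsion}. So I expect no real obstacle; the proof is a direct assembly of that proposition with Proposition \ref{w-cotorsion pair}(2).
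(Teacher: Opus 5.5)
Your proposal is correct and follows the paper's own proof: combine $Q^{\bot_w}=\mathcal{MC}$ (Proposition \ref{GV-E-cotorsion is M-cotorsion}) with Proposition \ref{w-cotorsion pair}(2). Your direct check of $\bigl(\rule{0pt}{10pt}^{\bot_w}(\mathcal{S}^{\bot_w})\bigr)^{\bot_w}=\mathcal{S}^{\bot_w}$ is accurate and simply fills in the step the paper calls routine.
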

\begin{proof}
	This follows immediately from Propositions \ref{GV-E-cotorsion is M-cotorsion} and \ref{w-cotorsion pair}(2).
\end{proof}

Let $R$ be a domain and $M$ an $R$-module. Recall from \cite[p. 3]{Matlis64} that $M$ is called \textit{$h$-divisible} if it is an homomorphic image of an injective $R$-module, and that $M$ is called \textit{$h$-reduced} if no nonzero submodule of $M$ is $h$-divisible. It is clear that $M$ is $h$-reduced if and only if $\Hom_R(Q,M)=0$. 

The following theorem gives some characterizations of $w$-strongly flat modules over domains.

\begin{theorem}\label{characterization of  strongly w-flat}
	Let $R$ be a domain and $M$ an $R$-module. Then the following statements are equivalent.
	\begin{enumerate}
		\item $M$ is $w$-strongly flat.
		\item There is a $w$-split exact sequence of $R$-modules $0\rightarrow C\rightarrow N\rightarrow M\rightarrow 0$, where $N$ fits in an exact sequence of $R$-modules $0\rightarrow F\rightarrow N\rightarrow E\rightarrow 0$ with $F$ free and $E$ a $Q$-vector space.
		\item $M\in\mathcal{FL}_w$ and $\Ext^1_R(M,C)$ is $\GV$-torsion for all $C\in\mathcal{FL}_w\bigcap\mathcal{MC}$.
	\end{enumerate}
\end{theorem}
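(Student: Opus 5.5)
I will prove the cycle $(1)\Rightarrow(2)\Rightarrow(1)$ and then $(1)\Leftrightarrow(3)$, modelling the argument on the classical characterization of strongly flat modules over domains, namely that $M$ is strongly flat iff it is a summand of some $N$ with $0\to F\to N\to E\to 0$, $F$ free and $E$ a $Q$-vector space. Throughout I use that $(\mathcal{SF},\mathcal{MC})$ is the complete cotorsion pair generated by $Q$.

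For $(1)\Rightarrow(2)$, I take a special $\mathcal{SF}$-precover of $M$. Since $(\mathcal{SF},\mathcal{MC})$ is cogenerated by a set, namely $\{Q\}$, it is complete, and the standard construction (\cite[Theorem 3.2.1]{GT06}) gives an exact sequence $0\to C\to N\to M\to 0$. Here $C\in\mathcal{MC}$, and $N$ is an extension of a free module $F$ by a direct sum of copies of $Q$. Concretely, one first builds $0\to M'\to F\to M\to 0$, and then a special preenvelope of the syzygy by Eklof's lemma with $Q$-filtration. For domains it is known that the cokernel there is a $Q$-vector space (\cite[Section 4]{GT06} / Trlifaj), which produces the required shape of $N$. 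Since $M\in\mathcal{SF}_w$ and $C\in\mathcal{MC}$, the group $\Ext^1_R(M,C)$ is $\GV$-torsion. Lemma \ref{w-split exact sequence} then shows the sequence is $w$-split.

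For $(2)\Rightarrow(1)$, the module $N$ is strongly flat, since $F$ and $E$ both lie in ${}^\perp\mathcal{MC}$: $E$ is a direct sum of copies of $Q$, and $\mathcal{SF}$ is closed under extensions. Hence $N\in\mathcal{SF}\subseteq\mathcal{SF}_w$, and Proposition \ref{strongly w-flat and w-split exact sequence} gives $M\in\mathcal{SF}_w$.

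For $(1)\Rightarrow(3)$, $M\in\mathcal{FL}_w$ by Proposition \ref{strongly w-flat is w-flat}, and the Ext condition is trivially inherited. For $(3)\Rightarrow(1)$, given an arbitrary $C\in\mathcal{MC}$, I take its special $\mathcal{EC}_w$-preenvelope from the complete cotorsion pair $(\mathcal{FL}_w,\mathcal{EC}_w)$ of Proposition \ref{w-E-cotorsion pair}. This gives $0\to C\to D\to L\to 0$ with $D\in\mathcal{EC}_w$ and $L\in\mathcal{FL}_w$. Because $\mathcal{MC}$ is closed under extensions and $\mathcal{EC}_w\subseteq\mathcal{EC}_{\GV}\subseteq\mathcal{MC}$, we get $D\in\mathcal{MC}$. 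The ideal is to have $D\in\mathcal{FL}_w$ as well, but that is not automatic, so instead I use a cover. Take a special $w$-flat cover $0\to K\to G\to C\to 0$ with $G\in\mathcal{FL}_w$ and $K\in\mathcal{EC}_w\subseteq\mathcal{MC}$. Since $C$ and $K$ are Matlis-cotorsion, $\Ext^1_R(Q,G)$ sits between $\Ext^1_R(Q,K)=0$ and $\Ext^1_R(Q,C)=0$, so $G\in\mathcal{MC}\cap\mathcal{FL}_w$. The long exact sequence then gives
\[\Ext^1_R(M,G)\to\Ext^1_R(M,C)\to\Ext^2_R(M,K).\]
The first term is $\GV$-torsion by (3). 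The third vanishes because $(\mathcal{FL}_w,\mathcal{EC}_w)$ is hereditary and $M\in\mathcal{FL}_w$. Hence $\Ext^1_R(M,C)$ is $\GV$-torsion, so $M\in\mathcal{SF}_w$.

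The main obstacle is $(1)\Rightarrow(2)$, specifically verifying that the middle term of the special precover has the stated form $0\to F\to N\to E\to 0$ with $E$ a $Q$-vector space. I expect to rely on the known classical version of this result for $(\mathcal{SF},\mathcal{MC})$ over domains (\cite[Lemma 6.9 / Section 7]{GT06}, Trlifaj), rather than re-deriving it.
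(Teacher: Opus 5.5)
Your proposal is correct and follows the paper's proof essentially step for step. The paper's $(3)\Rightarrow(1)$ uses the same special $w$-flat precover $0\to X\to G\to C\to 0$ with $X\in\mathcal{EC}_w$ and the vanishing of $\Ext^2_R(M,X)$. Its $(1)\Rightarrow(2)$ is the same free-presentation-plus-pushout construction of a special $\mathcal{SF}$-precover. The only difference is that the paper takes the preenvelope $0\to H\to C\to E\to 0$ of the torsion-free $h$-reduced syzygy $H$ from Matlis's theory, rather than from Eklof--Trlifaj. The obstacle you flagged is harmless in any case: a $\{Q\}$-filtered module over a domain is torsion-free and divisible, hence a $Q$-vector space.
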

\begin{proof}
	$(1)\Rightarrow (2)$ Let $M$ be a $w$-strongly flat module and consider a free presentation of $M$: $0\rightarrow H\rightarrow F\rightarrow M\rightarrow 0$, where $F$ is a free $R$-module. Since $F$ is a torsion-free and $h$-reduced module, $H$ is also torsion-free and $h$-reduced. Hence, by \cite[Corollary 2.2 and p. 3, (III)]{Matlis64}, there exists an exact sequence of $R$-modules $0\rightarrow H\rightarrow C\rightarrow E\rightarrow 0$, where $E$ is a $Q$-vector space and $C$ is an $h$-reduced Matlis-cotorsion module. Consider the following pushout diagram:
	\[\xymatrix{
		& 0\ar[d] & 0\ar[d] & & \\
		0\ar[r]	& H\ar[r]\ar[d] & F\ar[r]\ar[d] & M \ar[r]\ar@{=}[d]& 0 \\
		0\ar[r]	& C\ar[r]\ar[d] & N\ar[r]\ar[d] & M\ar[r] & 0 \\
		& E\ar@{=}[r]\ar[d] & E\ar[d] & & \\
		& 0 & 0 & &
	}\]
Since $M$ is $w$-strongly flat, $\Ext^1_R(M,C)$ is $\GV$-torsion; so the second row of the above diagram $w$-splits by Lemma \ref{w-split exact sequence}. Thus statement (2) holds.
	
	$(2)\Rightarrow (1)$ Assume that (2) holds. Since $F$ and $E$ are $w$-strongly flat modules, $N$ is also $w$-strongly flat. Hence, by Proposition \ref{strongly w-flat and w-split exact sequence}, $M$ is $w$-strongly flat.
	
	$(1)\Rightarrow (3)$ This is a consequence of Proposition \ref{strongly w-flat is w-flat}
	
	$(3)\Rightarrow (1)$ Suppose that (3) holds and let $C$ be any Matlis-cotorsion $R$-module. Then by Proposition \ref{w-E-cotorsion pair}, there is an exact sequence of $R$-modules
	\[0\rightarrow X\rightarrow G\rightarrow C\rightarrow 0,\]
	where $G$ is $w$-flat and $X$ is $w$-Enochs-cotorsion. Thus $G\in \mathcal{FL}_w\bigcap\mathcal{MC}$.  By the hypothesis, $\Ext^1_R(M,G)$ is $\GV$-torsion. Consider the following exact sequence of $R$-modules $0\rightarrow E\rightarrow P\rightarrow M\rightarrow 0$ with $P$ projective and $E$ $w$-flat. Hence, we obtain $\Ext^2_R(M,X)\cong\Ext^1_R(E,X)=0$. Therefore, the exact sequence
	\[\Ext^1_R(M,G)\rightarrow\Ext^1_R(M,C)\rightarrow\Ext^2_R(M,X)=0,\]
	implies that $\Ext^1_R(M,C)$ is a $\GV$-torsion module. Consequently,  $M$ is a $w$-strongly flat module.
\end{proof}

Now, we characterize the $w$-strongly flat modules within the $\GV$-torsion modules.

\begin{proposition}\label{GV-torsion strongly w-flat}
	Let $R$ be a domain and $M$ a $\GV$-torsion $R$-module. Then the following statements are equivalent.
	\begin{enumerate}
		\item $M$ is $w$-strongly flat.
		\item $JM=0$ for some $J\in\GV(R)$.
		\item $M$ is $w$-split.
	\end{enumerate}
\end{proposition}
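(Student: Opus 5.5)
The plan is to prove the cycle $(1)\Rightarrow(2)\Rightarrow(3)\Rightarrow(1)$. The implication $(3)\Rightarrow(1)$ is immediate from the remark after Definition \ref{the definition of w-strongly flat} that $\mathcal{SP}_w\subseteq\mathcal{SF}_w$.

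For $(2)\Rightarrow(3)$, suppose $JM=0$ with $J=\langle d_1,\dots,d_n\rangle\in\GV(R)$.
\begin{enumerate}
\item For any $R$-module $N$ and $d\in J$, multiplication by $d$ on $\Ext^1_R(M,N)$ is induced by $\eta^M_d$.
\item Since $\eta^M_d=0$, we get $J\,\Ext^1_R(M,N)=0$, so $\Ext^1_R(M,N)$ is $\GV$-torsion for every $N$.
\item By \cite[Proposition 2.4]{WQ20}, quoted before Lemma \ref{w-split exact sequence}, $M$ is $w$-split.
\end{enumerate}
This direction does not use that $M$ is $\GV$-torsion.

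For $(1)\Rightarrow(2)$, I would use Theorem \ref{characterization of  strongly w-flat}$(2)$.
\begin{enumerate}
\item The theorem gives a $w$-split exact sequence $0\rightarrow C\rightarrow N\xrightarrow{g} M\rightarrow 0$, together with an exact sequence $0\rightarrow F\rightarrow N\rightarrow E\rightarrow 0$, where $F$ is free and $E$ is a $Q$-vector space.
\item By the definition of $w$-split, there are $J=\langle d_1,\dots,d_n\rangle\in\GV(R)$ and $h_k\in\Hom_R(M,N)$ with $gh_k=\eta^M_{d_k}$.
\item The key claim is $\Hom_R(M,N)=0$. Granting it, each $h_k=0$, so $d_kM=gh_k(M)=0$ for all $k$, and hence $JM=0$.
\item Since $M$ is $\GV$-torsion, the claim follows once $N$ is shown to be $\GV$-torsion-free. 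Indeed, the image of a $\GV$-torsion module is $\GV$-torsion, and it lands in $\tor(N)=0$.
\end{enumerate}

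The main (though mild) obstacle is showing that $N$ is $\GV$-torsion-free. I would argue in three steps.
\begin{enumerate}
\item $F$ is free, hence a $w$-module and in particular $\GV$-torsion-free.
\item Over a domain every $\GV$-ideal $J$ is nonzero, since $\Hom_R(R/J,R)=0$. So for $x$ in the $Q$-vector space $E$, the condition $Jx=0$ forces $x=0$, and $E$ is $\GV$-torsion-free.
\item The class $\F_{\GV}$ is closed under extensions, because it is the torsion-free class of the torsion theory $\tau_w$ (Proposition in the introduction). Concretely, $\tor$ is left exact: $\tor(N)$ maps into $\tor(E)=0$, so $\tor(N)\subseteq F$, and then $\tor(N)=\tor(F)=0$.
\end{enumerate}
Hence $N$ is $\GV$-torsion-free, so $\Hom_R(M,N)=0$ and the cycle closes.
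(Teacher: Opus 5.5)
Your proof is correct and follows essentially the paper's argument for $(1)\Rightarrow(2)$: obtain a $w$-split epimorphism onto $M$ from a $\GV$-torsion-free module, then note that the maps $h_k$ must vanish because $\Hom_R(M,-)$ kills $\GV$-torsion-free modules. The paper obtains this sequence from the torsion-free cover $0\rightarrow W\rightarrow F\rightarrow M\rightarrow 0$, whose kernel is Warfield-, hence Matlis-cotorsion, so the sequence $w$-splits by Lemma \ref{w-split exact sequence}; you take it instead from statement (2) of Theorem \ref{characterization of  strongly w-flat}, and where the paper simply cites [WQ20, Lemma 2.3] for $(2)\Leftrightarrow(3)$, your direct argument ($J\,\Ext^1_R(M,-)=0$ together with the $\Ext$-characterization of $w$-split modules) is a valid replacement.
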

\begin{proof}
	$(1)\Rightarrow (2)$  Suppose that $M$ is a $w$-strongly flat module. The torsion-free cover $\pi$ of $M$ yields an exact sequence
	\[0\rightarrow W\rightarrow F\xrightarrow[]{~\pi~}M\rightarrow 0,\]
	where $F$ is torsion-free (and hence $\GV$-torsion-free) and $W$ is Warfield-cotorsion (and hence Matlis-cotorsion). So $\Ext_R^1(M,W)$ is $\GV$-torsion, thus by Lemma \ref{w-split exact sequence}, the above exact sequence is $w$-split. Therefore, there exist $J=\langle d_1,d_2,\cdots,d_n\rangle\in\GV(R)$ and $h_1,h_2,\cdots,h_n\in\Hom_R(M,F)$ such that $\eta^M_{d_k}=\pi h_k$ for $k=1,2,\cdots,n$. Since $M$ is $\GV$-torsion and $F$ is $\GV$-torsion-free, $\Hom_R(M,F)=0$, which forces $h_k=0$ for $k=1,2,\cdots,n$. Consequently, for any $\forall x\in M$, we have
	\[d_kx=\eta^M_{d_k}(x)=\pi h_k(x)=0,~k=1,2,\cdots,n.\]
	So $JM=0$.
	
	$(2)\Leftrightarrow (3)$ See \cite[Lemma 2.3]{WQ20}.
	
	$(3)\Rightarrow (1)$ This is trivial.
\end{proof}

It is known that all $\GV$-torsion modules are $w$-flat and that every $w$-strongly flat module over a domain is $w$-flat. However, not all $\GV$-torsion modules are $w$-strongly flat.

\begin{example}
	Let $R$ be a two dimensional regular local ring and set
	\[M=\bigoplus\{R/J~|~J\in\GV(R)\}.\]
	Then $M$ is a $\GV$-torsion $R$-module which is not $w$-split (see \cite[Example 2.6]{WQ20}). By Proposition \ref{GV-torsion strongly w-flat}, it is not $w$-strongly flat over $R$, either.
\end{example}

We next consider the localizations of $w$-strongly flat modules over a domain $R$ at the prime $w$-ideals of $R$. 

\begin{proposition}\label{localization of strongly $w$-flat}
	Let $R$ be a domain and let $M$ be a $w$-strongly flat $R$-module. Then $M_{\p}$ is a strongly flat $R_{\p}$-module for all prime $w$-ideals $\p$.
\end{proposition}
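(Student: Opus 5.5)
The plan is to use the structural description in Theorem \ref{characterization of  strongly w-flat}(2) and localize it at $\p$. Since $M$ is $w$-strongly flat, there is a $w$-split exact sequence
\[0\rightarrow C\rightarrow N\rightarrow M\rightarrow 0,\]
together with an exact sequence $0\rightarrow F\rightarrow N\rightarrow E\rightarrow 0$, where $F$ is free and $E$ is a $Q$-vector space.

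First, we localize the $w$-split sequence at a prime $w$-ideal $\p$. By Lemma \ref{localization of w-split exact sequence}, the sequence $0\rightarrow C_\p\rightarrow N_\p\rightarrow M_\p\rightarrow 0$ splits over $R_\p$. Hence $M_\p$ is a direct summand of $N_\p$.

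Next, we localize the second sequence, which stays exact because localization is exact. This gives $0\rightarrow F_\p\rightarrow N_\p\rightarrow E_\p\rightarrow 0$. Here $F_\p$ is a free $R_\p$-module. Moreover, $E_\p\cong E$ is a vector space over $Q$, and $Q$ is also the quotient field of the domain $R_\p$.

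The key observation is that, over the domain $R_\p$ with quotient field $Q$, both $F_\p$ and $E_\p$ are strongly flat. Indeed, $(\mathcal{SF},\mathcal{MC})$ over $R_\p$ is the cotorsion pair generated by $Q$ (and by projectives). So $Q$, and every direct sum of copies of $Q$, lies in ${}^{\bot}\mathcal{MC}_{R_\p}$, since $\Ext^1$ converts direct sums in the first variable into products. The same holds for free modules.

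The class of strongly flat $R_\p$-modules is closed under extensions and direct summands, because it is a left $\Ext$-orthogonal class. It follows that $N_\p$ is strongly flat over $R_\p$, and therefore so is its direct summand $M_\p$. This completes the argument.

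The only point that needs care is the identification of $Q$ as the quotient field of $R_\p$, so that "Matlis-cotorsion" over $R_\p$ refers to $\Ext^1_{R_\p}(Q,-)$ and $E_\p$ is a sum of copies of that $Q$. This holds because $R$ is a domain, so that $R\subseteq R_\p\subseteq Q$.

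We also need that the $Q$-module structure on $E$ makes $E_\p=E$ a free $Q$-module. This is immediate.

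I expect no serious obstacle. The substantive inputs are Theorem \ref{characterization of  strongly w-flat} and Lemma \ref{localization of w-split exact sequence}, both already available.
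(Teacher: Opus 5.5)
Your proof is correct and follows the paper's argument: you take the two sequences from the structure theorem for $w$-strongly flat modules over domains, localize at $\p$ so that the $w$-split one splits, and conclude. The only difference is the last step. The paper cites Göbel--Trlifaj (Corollary 4.4.11) there, whereas you verify directly the easy direction it needs: an extension of a $Q$-vector space by a free $R_\p$-module, and any direct summand of such a module, lies in ${}^{\bot}\mathcal{MC}$ over $R_\p$.
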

\begin{proof}
	Suppose that $\p$ is a prime $w$-ideal of $R$. Since $M$ is a $w$-strongly flat $R$-module, Theorem \ref{characterization of  strongly w-flat} yields a $w$-split exact sequence of $R$-modules $0\rightarrow C\rightarrow N\rightarrow M\rightarrow 0$, where $N$ fits in an exact sequence of $R$-modules $0\rightarrow F\rightarrow N\rightarrow E\rightarrow 0$ with $F$ free and $E$ a $Q$-vector space. Therefore, by Lemma \ref{localization of w-split exact sequence}, the exact sequences of $R_{\p}$-modules $0\rightarrow C_{\p}\rightarrow N_{\p}\rightarrow M_{\p}\rightarrow 0$ is split, where $N_{\p}$ fits in an exact sequence of $R_{\p}$-modules $0\rightarrow F_{\p}\rightarrow N_{\p}\rightarrow E\rightarrow 0$ with $F_{\p}$ a free $R_{\p}$-module. Thus, it follows from \cite[Corollary 4.4.11]{GT06} that $M_{\p}$ is a strongly flat $R_{\p}$-module.
\end{proof}

In \cite[Proposition 2.5]{BS04}, Bazzoni and Salce showed that strongly flat submodules of a projective $R$-modules are themselves projective under the hypotheses that $R$ is a Matlis domain and the submodule is an essential submodule. Later, in \cite[Theorem 3.2]{Lee15}, Lee dropped both conditions and got the same result. 

Now, we concentrate on $w$-strongly flat submodules of projective modules. 

\begin{lemma}\label{iwsd-leq-1}
	Let $R$ be a domain and $M$ an $R$-module. Then $\wsd_RM\leqslant 1$ if and only if $\Ext_R^1(M,D)$ is $\GV$-torsion for all $h$-divisible $R$-modules $D$.
\end{lemma}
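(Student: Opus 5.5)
The argument parallels the $iw$-split dimension criterion. We use the dual characterization: $\wsd_RM\leqslant 1$ if and only if $\Ext_R^2(M,N)$ is $\GV$-torsion for all $R$-modules $N$. This is proved by dimension shifting exactly as for $\iwsd$: take a projective presentation $0\rightarrow L\rightarrow P\rightarrow M\rightarrow 0$; then $\wsd_RM\leqslant 1$ iff $L$ is $w$-split, iff $\Ext^1_R(L,N)\cong\Ext^2_R(M,N)$ is $\GV$-torsion for all $N$, by \cite[Proposition 2.4]{WQ20}.

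For the necessity, let $D$ be $h$-divisible and choose an epimorphism $E\rightarrow D$ with $E$ injective, with kernel $Y$. The long exact sequence gives
\[0=\Ext^1_R(M,E)\rightarrow\Ext^1_R(M,D)\rightarrow\Ext^2_R(M,Y).\]
Since $\wsd_RM\leqslant1$, the right-hand term is $\GV$-torsion. Hence $\Ext^1_R(M,D)$ embeds in a $\GV$-torsion module and is itself $\GV$-torsion, because $\T_{\GV}$ is closed under submodules.

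For the sufficiency, let $N$ be an arbitrary $R$-module. Embed $N$ in its injective envelope, giving $0\rightarrow N\rightarrow E(N)\rightarrow D\rightarrow 0$. Here $D=E(N)/N$ is a homomorphic image of an injective module, hence $h$-divisible. The long exact sequence gives
\[\Ext^1_R(M,D)\rightarrow\Ext^2_R(M,N)\rightarrow\Ext^2_R(M,E(N))=0.\]
So $\Ext^2_R(M,N)$ is a homomorphic image of the $\GV$-torsion module $\Ext^1_R(M,D)$, and is therefore $\GV$-torsion. By the characterization above, $\wsd_RM\leqslant 1$.

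The only step needing care is the characterization of $\wsd_RM\leqslant 1$ via $\Ext^2$. This follows from \cite[Proposition 2.4]{WQ20} applied to the first syzygy, together with the fact that, exactly as for $\iwsd$, one may use any projective presentation. That independence holds by a Schanuel-type argument: $w$-split modules are closed under adding projective summands, and their definition is $\Ext$-based. Note that the domain hypothesis is not really used, beyond the fact that $h$-divisibility is defined via Matlis's setting.
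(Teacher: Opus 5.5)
Your proof is correct and is essentially the paper's argument: both directions use a short exact sequence $0\rightarrow K\rightarrow E\rightarrow D\rightarrow 0$ with $E$ injective to identify $\Ext^1_R(M,D)$ with $\Ext^2_R(M,K)$. Both rely on the criterion that $\wsd_RM\leqslant 1$ if and only if $\Ext^2_R(M,N)$ is $\GV$-torsion for all $N$, which the paper uses implicitly as the dual of its stated $iw$-split result and which you justify explicitly; you are also right that the domain hypothesis plays no real role.
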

\begin{proof}
	Assume that $\wsd_RM\leqslant 1$ and let $D$ be any $h$-divisible $R$-module. Then there is an exact sequence of $R$-modules $0\rightarrow K\rightarrow E\rightarrow D \rightarrow 0$,
	where $E$ is injective. Since $\wsd_RM\leqslant 1$, it follows that
	$\Ext_R^1(M,D)\cong\Ext_R^2(M,K)$ is a $\GV$-torsion module.
	
	Conversely, suppose that $\Ext_R^1(M,D)$ is $\GV$-torsion for all $h$-divisible $R$-modules $D$. Then for each $R$-module $K$, consider the exact sequence of $R$-modules $0\rightarrow K\rightarrow E\rightarrow D \rightarrow 0$ with $E$ injective and $D$ $h$-divisible. By the hypothesis, $\Ext_R^1(M,D)$ is a $\GV$-torsion module, and hence $\Ext_R^2(M,K)\cong\Ext_R^1(M,D)$ is also $\GV$-torsion. Therefore, $\wsd_RM\leqslant 1$.
\end{proof}

\begin{lemma}\label{w-sm}
	Let $R$ be a domain, $P$ be a projective $R$-module, and $F$ be a $w$-strongly flat $R$-submodule of $P$. Then $F$ is $w$-split.
\end{lemma}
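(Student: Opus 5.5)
The plan is to reduce the statement to a bound on the $w$-split dimension of $P/F$ and then to prove that bound with Lemma \ref{iwsd-leq-1}. From $0\rightarrow F\rightarrow P\rightarrow P/F\rightarrow 0$ with $P$ projective we get, for every $R$-module $K$,
\[\Ext^1_R(F,K)\cong\Ext^2_R(P/F,K).\]
By \cite[Proposition 2.4]{WQ20}, $F$ is $w$-split if and only if the left side is $\GV$-torsion for all $K$. This holds if and only if $\wsd_R(P/F)\leqslant 1$. By Lemma \ref{iwsd-leq-1}, this is equivalent to $\Ext^1_R(P/F,D)$ being $\GV$-torsion for every $h$-divisible $R$-module $D$. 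So the whole task is this $\Ext^1$ statement.

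Now fix an $h$-divisible $D$. From the exact sequence
\[\Hom_R(P,D)\rightarrow\Hom_R(F,D)\rightarrow\Ext^1_R(P/F,D)\rightarrow 0\]
it suffices to show that for each $f\in\Hom_R(F,D)$ there is a $\GV$-ideal $J$ such that $df$ extends to $P$ for every $d\in J$.

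To get these extensions I will use the structure of $F$ from Theorem \ref{characterization of  strongly w-flat}(2). There is a $w$-split sequence $0\rightarrow C\rightarrow N\xrightarrow{g} F\rightarrow 0$, and $N$ sits in $0\rightarrow G\rightarrow N\rightarrow E\rightarrow 0$ with $G$ free and $E$ a $Q$-vector space. The $w$-splitting gives $J=\langle d_1,\dots,d_n\rangle\in\GV(R)$ and maps $h_k:F\rightarrow N$ with $gh_k=d_k\cdot\mathbf{1}_F$. Hence $d_kf=(fg)h_k$, and it is enough to extend maps of the form $\phi h_k$ with $\phi\in\Hom_R(N,D)$. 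I would then use the torsion-freeness of $F\subseteq P$, which makes $F$ essentially embedded in $Q\otimes_R F\subseteq Q\otimes_R P$. Let $X=P+(Q\otimes_R F)\subseteq Q\otimes_R P$. The plan is to factor the relevant maps first through $Q\otimes_R F$ and then through $X$, and finally to restrict to $P$.

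The main obstacle is exactly this extension step. It amounts to showing that maps into the $h$-divisible module $D$ extend along the inclusions $F\subseteq Q\otimes_R F$ and $F\subseteq P$, up to a $\GV$-ideal. The difficulty is that $\Ext^1_R(Q,D)$ need not vanish for $h$-divisible $D$, since $\pd Q$ may exceed $1$. My first attempt is to write $D=I/K$ with $I$ injective and to run Lee's argument from \cite[Theorem 3.2]{Lee15}, transplanted to $\tau_w$. Every "$=0$" in that argument becomes "is $\GV$-torsion", and the splitting steps are handled with Lemma \ref{w-split exact sequence} and Lemma \ref{localization of w-split exact sequence}.

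If the global argument resists, the fallback is local. Fix a maximal $w$-ideal $\mathfrak m$. By Proposition \ref{localization of strongly $w$-flat}, $F_{\mathfrak m}$ is a strongly flat submodule of the free $R_{\mathfrak m}$-module $P_{\mathfrak m}$, so Lee's theorem makes it projective. It would then remain to show that $\Ext^1_R(F,K)_{\mathfrak m}=0$. This needs a comparison of $\Ext$ with localization, available when $F$ has a nice presentation; establishing that presentation is again the delicate point.
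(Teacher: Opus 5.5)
Your reduction is correct and is the same as the paper's: since $\Ext^1_R(F,-)\cong\Ext^2_R(P/F,-)$, $F$ is $w$-split if and only if $\wsd_R(P/F)\leqslant 1$. By Lemma \ref{iwsd-leq-1} it then suffices to show that $\Ext^1_R(P/F,D)$ is $\GV$-torsion for every $h$-divisible $D$. But that is the whole content of the lemma, and you do not prove it.

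\begin{itemize}
\item The plan of extending maps, through the presentation $0\rightarrow C\rightarrow N\rightarrow F\rightarrow 0$ and through $X=P+(Q\otimes_R F)$, stops at the obstacle you name yourself.
\item ``Run Lee's argument with every $=0$ replaced by $\GV$-torsion'' is a hope, not a step.
\item The local fallback cannot close the gap on its own, because $w$-splitness is not detected by localizations at maximal $w$-ideals. Take a non-Dedekind almost Dedekind domain; it is DW, so $w$-split means projective. A non-finitely generated maximal ideal there is a locally principal submodule of $R$ that is not projective. You also have no way to commute $\Ext^1_R(F,-)$ with localization.
\end{itemize}

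The missing idea is a second dimension shift that uses the hypothesis on $F$ directly, with no maps to extend. Your attempt to write $D=I/L$ with $I$ injective stalls only because an arbitrary kernel $L$ need not be Matlis-cotorsion. The fix is to choose the presentation well. Since $D$ is $h$-divisible, evaluation at $1$ gives an exact sequence $0\rightarrow M\rightarrow E\rightarrow D\rightarrow 0$ with $E=\Hom_R(Q,D)$, a $Q$-vector space, and $M=\Hom_R(Q/R,D)$. Now apply $\Hom_R(Q,-)$. Under $\Hom_R(Q,E)\cong\Hom_R(Q\otimes_R Q,D)\cong E$, the induced map $\Hom_R(Q,E)\rightarrow\Hom_R(Q,D)=E$ is an isomorphism, and $\Ext^1_R(Q,E)=0$. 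Hence $\Ext^1_R(Q,M)=0$, i.e. $M\in\mathcal{MC}$; this is the Matlis result the paper cites. As $E$ is injective over the domain $R$,
\[\Ext^1_R(P/F,D)\cong\Ext^2_R(P/F,M)\cong\Ext^1_R(F,M),\]
and the last module is $\GV$-torsion precisely because $F$ is $w$-strongly flat. So your worry that $\Ext^1_R(Q,D)$ may be nonzero is beside the point: you never need to extend maps along $F\subseteq Q\otimes_R F$ or along $F\subseteq P$.
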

\begin{proof}
	Let $D$ be an $h$-divisible $R$-module. Then by \cite[Theorem 2.1 and p. 3, (II)]{Matlis64}, there exists an exact sequence of $R$-modules $0\rightarrow M\rightarrow E\rightarrow D\rightarrow 0$, where $E$ is a $Q$-vector space and $M\in\mathcal{MC}$. Thus
	\[\Ext_R^1(P/F,D)\cong\Ext_R^2(P/F,M)\cong\Ext_R^1(F,M)\]
	is $\GV$-torsion. By Lemma \ref{iwsd-leq-1}, $\wsd_R(P/F)\leqslant 1$, and so $F$ is $w$-split.
\end{proof} 

Recall that a nonzero ideal $I$ of a domain $R$ is called \textit{$w$-invertible} if there is a fractional ideal $B$ of $R$ such that $(IB)_w=R$, or equivalently, if $(II^{-1})_w=R$, where $I^{-1}=\{r\in Q~|~rI\subseteq R\}$ (see \cite[Proposition 5.1]{WM97}). It is known that a domain $R$ is a Krull domain if and only if every nonzero ideal of $R$ is $w$-invertible (see \cite[Theorem 5.4]{WM97}).

\begin{lemma}\label{a lemma for w-invertible ideal}
	Let $R$ be a domain. If $I$ is a $w$-invertible ideal of $R$, then:
	\begin{enumerate}
		\item $I$ is of finite type, i.e., there is a finitely generated subideal $A$ of $I$ with $I_w=A_w$.
		\item For each prime $w$-ideal $\mathfrak{p}$, $I_{\mathfrak{p}}$ is a principal ideal of $R_{\mathfrak{p}}$.
	\end{enumerate}
\end{lemma}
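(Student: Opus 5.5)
The plan is to unwind the definition of $w$-invertibility, $(II^{-1})_w=R$, and use two facts: $w$-closure commutes with localization at prime $w$-ideals, and the $w$-closure of a finitely generated ideal is $R$ exactly when that ideal is a $\GV$-ideal.

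For (1), since $(II^{-1})_w=R$, the ideal $II^{-1}$ is not contained in any maximal $w$-ideal. Equivalently, $R/II^{-1}$ is $\GV$-torsion, so $1\cdot J\subseteq II^{-1}$ for some $J\in\GV(R)$. The $\GV$-ideal $J$ is finitely generated, say $J=\langle d_1,\dots,d_n\rangle$, and each $d_k$ is a finite sum $\sum_j a_{kj}b_{kj}$ with $a_{kj}\in I$ and $b_{kj}\in I^{-1}$. Let $A$ be the ideal generated by all the $a_{kj}$, and $B$ the fractional ideal generated by all the $b_{kj}$. Then $A\subseteq I$ is finitely generated, $J\subseteq AB\subseteq AI^{-1}\subseteq R$, and so $(AI^{-1})_w=R$. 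I would then compute
\[I_w=(I\cdot (AI^{-1})_w)_w=(IAI^{-1})_w=(A\cdot II^{-1})_w=(A(II^{-1})_w)_w=A_w,\]
using the standard rule $(XY_w)_w=(XY)_w$ for fractional ideals (see \cite{WK24}). This shows $I$ is of finite type.

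For (2), let $\p$ be a prime $w$-ideal. Then $J\nsubseteq\p$, so localizing $J\subseteq II^{-1}$ gives $I_\p(I^{-1})_\p=R_\p$, i.e. $I_\p$ is an invertible ideal of the local domain $R_\p$. An invertible ideal of a local domain is principal: from $1=\sum x_iy_i$ with $x_i\in I_\p$ and $y_i\in (I^{-1})_\p\subseteq (I_\p)^{-1}$, some term $x_iy_i$ is a unit, and then $I_\p=x_iR_\p$.

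The step most likely to need care is the $w$-closure bookkeeping in (1), namely justifying $(XY_w)_w=(XY)_w$ for fractional ideals of a domain. I would cite \cite{WK24} for it, or check it directly: if $Jy\subseteq Y$ with $J\in\GV(R)$, then $J\cdot xy\subseteq XY$, which gives the inclusion $XY_w\subseteq (XY)_w$ and hence the equality after taking $w$-closures.
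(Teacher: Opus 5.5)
Your proposal is correct and follows essentially the same route as the paper: extract a $\GV$-ideal $J\subseteq II^{-1}$, use its finite generation to get a finitely generated $A\subseteq I$ with $(AI^{-1})_w=R$, conclude $I_w=A_w$ by $w$-closure arithmetic, and localize at $\p$ to obtain an invertible, hence principal, ideal of the local domain $R_\p$. The differences are cosmetic. The paper argues elementwise via $Ra=(AC)_wa\subseteq(ACa)_w\subseteq A_w$ and localizes $(IB)_w=R$ using $((IB)_w)_\p=(IB)_\p$, whereas you use the rule $(XY_w)_w=(XY)_w$ and localize the containment $J\subseteq II^{-1}$ directly, using $J\nsubseteq\p$.
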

\begin{proof} Since $I$ is $w$-invertible, $(IB)_w=R$ for some fractional ideal $B$ of $R$.
	
	(1) Because $1\in (IB)_w$, there exists a $\GV$-ideal $J$ of $R$ with $J\subseteq IB$. But $J$ is finitely generated, so there are a finitely generated subideal $A$ of $I$ and a finitely generated fractional subideal $C$ of $B$ such that $J\subseteq AC$, i.e., $(AC)_w=R$. For any $a\in I$, $aC\subseteq R$, and hence
	\[Ra=(AC)_wa\subseteq ((AC)_wa)_w=(ACa)_w\subseteq A_w.\]
	Thus it follows that $I_w=A_w$, that is, $I$ is of finite type.
	
	(2) For each prime $w$-ideal $\mathfrak{p}$, we have $R_{\mathfrak{p}}=((IB)_w)_{\mathfrak{p}}=(IB)_\mathfrak{p}=I_{\mathfrak{p}}B_{_\mathfrak{p}}$. Therefore, $I_{\mathfrak{p}}$ is an invertible ideal of $R_{\mathfrak{p}}$, and hence it is free. Thus it follows that $I_{\mathfrak{p}}$ is a principal ideal. 
\end{proof}

It was proved in \cite[Proposition 2.2]{BS04} that every strongly flat ideal of a coherent domain is projective. In \cite[Corollary 3.4]{Lee15}, Lee showed that the coherence condition in this result is superfluous. Similarly, our next result shows that every $w$-strongly flat ideal of a domain is $w$-split.

\begin{proposition}\label{strongly w-flat ideals of integral domains}
	Let $R$ be a domain and $I$ a nonzero ideal of $R$. Then the following conditions are equivalent.
	\begin{enumerate}
		\item $I$ is $w$-strongly flat.
		\item $I$ is $w$-split.
		\item $I$ is $w$-invertible.		
	\end{enumerate}
\end{proposition}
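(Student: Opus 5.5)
We prove the cycle $(1)\Rightarrow(2)\Rightarrow(3)\Rightarrow(2)\Rightarrow(1)$, with $(2)\Leftrightarrow(3)$ as the substantive part.

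\emph{$(1)\Rightarrow(2)$.} This is immediate from Lemma \ref{w-sm}, since $I$ is a $w$-strongly flat submodule of the projective module $R$.

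\emph{$(2)\Rightarrow(1)$.} A $w$-split module $M$ satisfies that $\Ext^1_R(M,N)$ is $\GV$-torsion for every $N$. In particular $\Ext^1_R(I,C)$ is $\GV$-torsion for every Matlis-cotorsion $C$, so $\mathcal{SP}_w\subseteq\mathcal{SF}_w$.

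\emph{$(2)\Rightarrow(3)$.} By Proposition \ref{localizations of w-split modules}, $I_{\mathfrak m}$ is a free $R_{\mathfrak m}$-module for every $\mathfrak m\in\wmax(R)$. Being a nonzero ideal of the domain $R_{\mathfrak m}$, it is therefore principal, hence invertible in $R_{\mathfrak m}$. To get $(II^{-1})_w=R$, it suffices to show $II^{-1}\not\subseteq\mathfrak m$ for every maximal $w$-ideal $\mathfrak m$, because a proper $w$-ideal lies in some maximal $w$-ideal.

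The key point is that $(I^{-1})_{\mathfrak m}=(I_{\mathfrak m})^{-1}$. This needs $I$ to be finitely generated, or at least of finite type, which is not known a priori. Instead I would use the $w$-splitting directly. Take a $w$-split sequence $0\to L\to P\xrightarrow{g} I\to0$ with $P$ free, a $\GV$-ideal $J=\langle d_1,\dots,d_n\rangle$, and maps $h_k:I\to P$ with $gh_k=d_k\cdot\mathbf 1_I$. Write $\pi_j$ for the coordinate functionals of $P$ and $e_j$ for the images of the basis vectors. Each $\pi_jh_k:I\to R$ is multiplication by some $q_{jk}\in I^{-1}$, because $\Hom_R(I,R)\cong I^{-1}$ for a nonzero ideal of a domain. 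Then for $x\in I$,
\[d_kx=\sum_j q_{jk}x\,e_j ,\]
and choosing $x\neq0$ gives $d_k=\sum_j q_{jk}e_j\in I^{-1}I$. Hence $J\subseteq II^{-1}$, so $(II^{-1})_w=R$. This argument avoids localization entirely and is the cleanest route; the only care needed is that the sums are finite, which holds because $h_k(x)$ has finite support.

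\emph{$(3)\Rightarrow(2)$.} By Lemma \ref{a lemma for w-invertible ideal}(1), and reusing its proof, we have $J\subseteq AC$ with $A=\langle a_1,\dots,a_m\rangle\subseteq I$ and $C\subseteq I^{-1}$ finitely generated, where $J=\langle d_1,\dots,d_n\rangle\in\GV(R)$. Write $d_k=\sum_i a_ic_{ik}$ with $c_{ik}\in C$. Let $P=R^m$ and $g:P\to I$ be the map sending $e_i\mapsto a_i$. Note that $g$ need not be onto $I$, so replace $P$ by $R^m\oplus R^{(I)}$ mapping onto $I$, keeping the first $m$ coordinates sending $e_i\mapsto a_i$. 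Define $h_k(x)=\sum_i (c_{ik}x)e_i$, which is legitimate since $c_{ik}x\in R$. Then $gh_k(x)=\sum_i c_{ik}xa_i=d_kx$. Thus $0\to\ker g\to P\to I\to 0$ is $w$-split with $P$ free, and $I$ is $w$-split.

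The only subtle step is recognizing the trace-ideal argument in $(2)\Rightarrow(3)$. I expect it to go through as described.
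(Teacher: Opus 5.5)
Your proof is correct. The implications $(1)\Rightarrow(2)$, $(2)\Rightarrow(1)$ and $(2)\Rightarrow(3)$ match the paper. In $(2)\Rightarrow(3)$ the paper takes the dual-basis data $d_kx=\sum_i f_{ki}(x)x_i$ from \cite[Proposition 2.4]{WQ20}, while you read it off a $w$-split free presentation. That $P$ may be taken free needs one line: add a complement $P'$ with $P\oplus P'$ free, or apply Lemma~\ref{w-split exact sequence} to an arbitrary free presentation.

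The genuine difference is in $(3)\Rightarrow(2)$. The paper argues homologically. Lemma~\ref{a lemma for w-invertible ideal} gives that $I$ is of finite type with $I_{\mathfrak m}$ principal, and \cite[Proposition 1.11]{WK15} then makes $\Ext^1_R(I,N)_{\mathfrak m}\to\Ext^1_{R_{\mathfrak m}}(I_{\mathfrak m},N_{\mathfrak m})=0$ monic. Hence $\Ext^1_R(I,N)$ is $\GV$-torsion for every torsion-free $w$-module $N$. Taking $N$ to be the kernel of a projective presentation $g\colon P\to I$ makes the cokernel of $g_*\colon\Hom_R(I,P)\to\Hom_R(I,I)$ $\GV$-torsion, and the cokernel criterion of \cite[Proposition 2.4]{WQ20} concludes.

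You instead write down the splitting maps by hand. From $J\subseteq AC$ with $C\subseteq I^{-1}$ (note that $IB\subseteq(IB)_w=R$ forces $B\subseteq I^{-1}$), the maps $h_k(x)=\sum_i(c_{ik}x)e_i$ satisfy $gh_k=\eta^I_{d_k}$. This is literally the definition of a $w$-split sequence.

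Your route is self-contained and needs only that $II^{-1}$ contains a $\GV$-ideal, with no local principality and no Ext-localization. It also presents $(2)\Leftrightarrow(3)$ as a single $w$-version of the dual basis lemma: $I$ is $w$-split exactly when the trace ideal $II^{-1}$ contains a $\GV$-ideal. The paper's route is less explicit and depends on two external results. In exchange it yields the extra information that $\Ext^1_R(I,N)$ is $\GV$-torsion for all torsion-free $w$-modules $N$, and its ingredients (finite type, $\GV$-torsion-freeness, free localizations at maximal $w$-ideals) are not specific to ideals.
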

\begin{proof}
	$(1)\Rightarrow (2)$ By Lemma \ref{w-sm}. 
	
	$(2)\Rightarrow (1)$ This is trivial, since every $w$-split module is $w$-strongly flat.
	
	$(2)\Rightarrow (3)$ Suppose that $I$ is $w$-split. Then by \cite[Proposition 2.4]{WQ20}, there exist elements $\{x_i\}_{i\in\Gamma}$ of $I$ and $J=(d_1,d_2,\cdots,d_n)\in\GV(R)$ such that for all $k=1,2,\cdots,n$, there are homomorphisms $\{f_{ki}\in I^*\}_{i\in\Gamma}$ satisfying that for each $x\in I$, almost all $f_{ki}(x)=0$ and $d_kx=\sum\limits_i f_{ki}(x)x_i$. Fix a nonzero $a\in I$. For all $k=1,2,\cdots,n$ and for all $i\in\Gamma$ with $f_{ki}(a)\neq 0$, write $x_{ki}=\frac{f_{ki}(a)}{a}$. Then for every nonzero $b\in I$, $\frac{f_{ki}(b)}{b}=\frac{f_{ki}(a)}{a}=x_{ki}$, and so $x_{ki}\in I^{-1}$. Thus, it follows that for all $k=1,2,\cdots,n$, $d_k=\sum\limits_i x_{ki}x_i\in II^{-1}$, that is, $J\subseteq II^{-1}$. Therefore, $(II^{-1})_w=R$, and (3) holds.
	
	$(3)\Rightarrow (2)$ Assume that $I$ is $w$-invertible. We first show that for every torsion-free $w$-module $N$ over $R$, $\Ext_R^1(I,N)$ is $\GV$-torsion. By Lemma \ref{a lemma for w-invertible ideal}, $I$ is of finite type and $I_{\mathfrak{m}}$ is a principal ideal of $R_{\mathfrak{m}}$ for each maximal $w$-ideal $\mathfrak{m}$ of $R$. Hence it follows from \cite[Proposition 1.11]{WK15} that the induced homomorphism $\theta_1:\Ext_R^1(I,N)_{\mathfrak{m}}\rightarrow\Ext_{R_{\mathfrak{m}}}^1(I_{\mathfrak{m}},N_{\mathfrak{m}})$ is monic. Thus, $\Ext_R^1(I,N)_{\mathfrak{m}}=0$ for each maximal $w$-ideal $\mathfrak{m}$ of $R$, i.e., $\Ext_R^1(I,N)$ is $\GV$-torsion. 
	
	Now, let $g:P\rightarrow I$ be an $R$-epimorphism with $P$ a projective module and write $L=\ker g$. Then since $I$ is $\GV$-torsion-free and $P$ is a torsion-free $w$-module, $L$ is also a torsion-free $w$-module, and hence $\Ext_R^1(I,L)$ is $\GV$-torsion. Applying $\Hom_R(I,-)$ to the exact sequence $0\rightarrow L\rightarrow P\stackrel{g}{\rightarrow} I\rightarrow 0$ gives an exact sequence
	$$\Hom_R(I,P)\stackrel{g_*}{\longrightarrow} \Hom_R(I,I)\rightarrow
	\Ext^1_R(I,L).$$ 
	Thus, it follows that $\cok g_*$ is also $\GV$-torsion,
	whence $I$ is $w$-split by the equivalence of (1) and (5) in \cite[Proposition 2.4]{WQ20}.
\end{proof} 

In \cite[Corollary 2.4]{BS04}, Bazzoni and Salce proved that a domain $R$ is a Dedekind domain if and only if every ideal of $R$ is strongly flat. As a consequence of Proposition \ref{strongly w-flat ideals of integral domains}, we have the following $w$-theoretic analogue of this result.

\begin{corollary}\label{a chracterization of Krull domians in terms of strongly $w$-flat modules}
	Let $R$ be a domain. Then the following conditions are equivalent.
	\begin{enumerate}
		\item $R$ is a Krull domain.
		\item Every ideal of $R$ is $w$-strongly flat.
		\item Every ideal of $R$ is $w$-split.
	\end{enumerate}
\end{corollary}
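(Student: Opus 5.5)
The plan is to derive the corollary directly from Proposition \ref{strongly w-flat ideals of integral domains}, together with the characterization of Krull domains recalled before Lemma \ref{a lemma for w-invertible ideal}: a domain $R$ is Krull if and only if every nonzero ideal of $R$ is $w$-invertible \cite[Theorem 5.4]{WM97}. The only point needing separate treatment is the zero ideal, since the proposition is stated only for nonzero ideals.

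First I handle the zero ideal. It is free, hence projective, hence $w$-split and in particular $w$-strongly flat. So in (2) and (3) the condition on the ideal $0$ holds automatically, and both statements reduce to the corresponding condition on nonzero ideals.

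The main step then reads as follows. For each nonzero ideal $I$, Proposition \ref{strongly w-flat ideals of integral domains} gives the equivalences
\[
I \text{ is $w$-strongly flat} \iff I \text{ is $w$-split} \iff I \text{ is $w$-invertible}.
\]
Quantifying over all nonzero ideals shows that (2) and (3) are each equivalent to the statement that every nonzero ideal of $R$ is $w$-invertible. By \cite[Theorem 5.4]{WM97}, that statement is exactly (1).

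There is no real obstacle in this argument. The only care needed is the zero ideal, together with checking that the convention in \cite[Theorem 5.4]{WM97} concerns nonzero ideals, which matches the quantification here.
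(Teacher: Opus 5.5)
Your proposal is correct and is exactly the paper's argument. The paper obtains the corollary by applying the equivalences of Proposition \ref{strongly w-flat ideals of integral domains} to all nonzero ideals, then using the characterization, recalled before Lemma \ref{a lemma for w-invertible ideal}, of Krull domains as the domains in which every nonzero ideal is $w$-invertible; your separate treatment of the zero ideal (free, hence $w$-split and $w$-strongly flat) correctly fills in a detail the paper leaves implicit.
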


The following result characterizes domains over which every $w$-strongly flat module is $w$-split.

\begin{proposition}\label{strongly w-flat=w-split}
	The following statements are equivalent for a domain $R$.
	\begin{enumerate}
		\item $\mathcal{SF}_w=\mathcal{SP}_w$.
		\item $\mathcal{MC}=\m$, i.e., every $R$-module is a Matlis-cotorsion module.
		\item $\mathcal{SF}=\mathcal{P}_0$.
	\end{enumerate}
\end{proposition}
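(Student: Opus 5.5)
We prove $(1)\Rightarrow(2)\Rightarrow(3)\Rightarrow(1)$, using the $w$-cotorsion pairs $(\mathcal{SF}_w,\mathcal{MC})$ of Proposition \ref{GV-M-w-cotorsion pair} and $(\mathcal{SP}_w,\m)$ of Example \ref{examples of w-cotorsion pairs}, together with the classical cotorsion pair $(\mathcal{SF},\mathcal{MC})$.

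For $(1)\Rightarrow(2)$: if $\mathcal{SF}_w=\mathcal{SP}_w$, then taking right $w$-orthogonals gives
\[\mathcal{MC}={\mathcal{SF}_w}^{\bot_w}={\mathcal{SP}_w}^{\bot_w}=\m.\]

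For $(2)\Rightarrow(3)$: if $\mathcal{MC}=\m$, then $\mathcal{SF}={}^{\bot}\mathcal{MC}={}^{\bot}\m=\mathcal{P}_0$.

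For $(3)\Rightarrow(1)$, the key observation is that $Q\in\mathcal{SF}$ always holds, since $(\mathcal{SF},\mathcal{MC})$ is generated by $Q$. Hence (3) makes $Q$ a projective $R$-module. I will show this forces $Q=R$.
\begin{itemize}
\item Since $Q$ is projective, it embeds in a free module $R^{(X)}$.
\item Some coordinate projection $Q\to R$ is then a nonzero $R$-map $f$.
\item Because $Q$ is divisible, $f(Q)$ is a nonzero divisible submodule of $R$.
\item For $0\neq a\in f(Q)$, every $s\neq 0$ satisfies $a\in sf(Q)\subseteq sR$. So $a\in\bigcap_{s\neq0}sR$.
\end{itemize}
This is the step I expect to be the main obstacle: deducing that $R$ is a field. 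I would try to write $a=a^2r$, using $s=a^2$, which gives $ar=1$. Then $a$ is a unit, so $sR=R$ for every nonzero $s$. Hence every nonzero element is a unit, $R$ is a field, and $Q=R$.

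Once $R$ is a field, the conclusion is immediate:
\begin{itemize}
\item $\GV(R)=\{R\}$, so $\GV$-torsion means zero.
\item Every module is free, so $\mathcal{SP}_w=\m=\mathcal{SF}_w$, and (1) holds.
\end{itemize}

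If it is preferable to avoid the field argument, there is an alternative route. From $\mathcal{MC}={}^{\bot}$-data one gets that every module is Matlis-cotorsion, so $\mathcal{SF}_w={}^{\bot_w}\m=\mathcal{SP}_w$ by Example \ref{examples of w-cotorsion pairs}. To reach $\mathcal{MC}=\m$ from (3), use $Q\in\mathcal{P}_0$, which gives $\Ext^1_R(Q,-)=0$ directly. This route is in fact cleaner, and I would present it in the final proof.
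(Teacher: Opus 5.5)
Your proof is correct. The route you say you would actually present is the paper's argument:
\begin{itemize}
\item $(1)\Rightarrow(2)$ by taking right $w$-orthogonals in the $w$-cotorsion pairs $(\mathcal{SF}_w,\mathcal{MC})$ and $(\mathcal{SP}_w,\m)$;
\item $(2)\Rightarrow(1)$ via $\mathcal{SF}_w={}^{\bot_w}\mathcal{MC}={}^{\bot_w}\m=\mathcal{SP}_w$;
\item $(2)\Rightarrow(3)$ via ${}^{\bot}\m=\mathcal{P}_0$;
\item $(3)\Rightarrow(2)$ from $Q\in\mathcal{SF}=\mathcal{P}_0$.
\end{itemize}
For the last step the paper writes $\mathcal{MC}=\mathcal{SF}^{\bot}=\mathcal{P}_0^{\bot}=\m$, which is the same thing.

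Your first route for $(3)\Rightarrow(1)$ is genuinely different and also valid. The step you flagged as the obstacle is fine: from $a=a^2r$ with $a\neq 0$ in a domain you get $ar=1$. Then $a\in sR$ forces $sR=R$ for every $s\neq 0$.

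What this route buys is information the paper's purely formal orthogonality argument does not make explicit. Condition (2) says exactly that $\Ext^1_R(Q,-)=0$, i.e.\ that $Q$ is projective. Your argument shows this happens only when $R=Q$. So all three conditions are equivalent to $R$ being a field, and the proposition is really a characterization of fields.

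In a write-up, replace the garbled phrase ``From $\mathcal{MC}={}^{\bot}$-data''. What you mean is simply that, assuming (2), $\mathcal{SF}_w={}^{\bot_w}\m=\mathcal{SP}_w$ by Example~\ref{examples of w-cotorsion pairs}.
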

\begin{proof}
	$(1)\Rightarrow (2)$ If $\mathcal{SF}_w=\mathcal{SP}_w$, then by Proposition \ref{GV-M-w-cotorsion pair}, we obtain
	\[\mathcal{MC}={\mathcal{SF}_w}^{\bot_w}={\mathcal{SP}_w}^{\bot_w}=\m.\]
	
	$(2)\Rightarrow (1)$ and $(2)\Rightarrow (3)$ are obvious.
	
	$(3)\Rightarrow (2)$ If $\mathcal{SF}=\mathcal{P}_0$, then
	\[\mathcal{MC}=\mathcal{SF}^{\bot}={\mathcal{P}_0}^{\bot}=\m.\]
\end{proof}

\subsection{Applications of $w$-strongly flat modules}\quad

Recall that a domain $R$ is called a \textit{Matlis domain} if the projective dimension of $Q$ as an $R$-module is at most one, and that this property is equivalent to the fact that every divisible module is $h$-divisible.

\begin{theorem}\label{characterization of Matlis domain}
	The following statements are equivalent for a domain $R$.
	\begin{enumerate}
		\item $R$ is a Matlis domain.
		\item $\wsd_R Q\leqslant 1$.
		\item The class $\mathcal{SF}_w$ is resolving.
		\item The $w$-split dimension of any $w$-strongly flat $R$-module is at most one.
		\item The $w$-split dimension of any strongly flat $R$-module is at most one.
	\end{enumerate}
\end{theorem}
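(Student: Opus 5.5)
Throughout, the tool is the $w$-split dimension criterion, dual to the one recalled for $\iwsd$: $\wsd_RM\leqslant n$ if and only if $\Ext^{n+1}_R(M,N)$ is $\GV$-torsion for all $R$-modules $N$. The plan is to prove $(1)\Rightarrow(2)\Rightarrow(4)\Rightarrow(5)\Rightarrow(2)\Rightarrow(1)$ and, separately, $(3)\Leftrightarrow(4)$.

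\textbf{Step 1: $(1)\Rightarrow(2)$.} This is immediate, because $\pd_RQ\leqslant 1$ gives $\Ext^2_R(Q,-)=0$.

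\textbf{Step 2: $(2)\Rightarrow(4)$.} Let $M\in\mathcal{SF}_w$. Following Theorem \ref{characterization of  strongly w-flat}(2), take the $w$-split sequence $0\to C\to N\to M\to 0$, where $0\to F\to N\to E\to 0$ with $F$ free and $E$ a $Q$-vector space.
\begin{itemize}
\item Since $E$ is a direct sum of copies of $Q$, we get $\wsd_R E\leqslant 1$ from (2). Hence $\wsd_RN\leqslant 1$ via the long exact sequence.
\item Because the first sequence is $w$-split, I expect $\Ext^i_R(M,X)$ to embed, up to $\GV$-torsion, into $\Ext^i_R(N,X)$. The cleanest route is to show that multiplication by each $d_k$ on $\Ext^i_R(M,X)$ factors through $\Ext^i_R(N,X)$ via $h_k$.
\end{itemize}
Indeed, $\eta_{d_k}^M=gh_k$ gives $d_k\cdot\Ext^2_R(M,X)=h_k^*g^*\Ext^2_R(M,X)$, and $g^*$ lands in $\Ext^2_R(N,X)$, which is $\GV$-torsion. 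Hence $J\Ext^2_R(M,X)$ is $\GV$-torsion, so $\Ext^2_R(M,X)$ is $\GV$-torsion, because $\GV(R)$ is multiplicative.

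\textbf{Step 3: $(4)\Rightarrow(5)$ and $(5)\Rightarrow(2)$.} The first holds because $\mathcal{SF}\subseteq\mathcal{SF}_w$. The second holds because $Q$ is strongly flat.

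\textbf{Step 4: $(3)\Leftrightarrow(4)$.} For $(4)\Rightarrow(3)$, apply the analogue of Proposition \ref{hereditary w-cotorsion pair} to the $w$-cotorsion pair $(\mathcal{SF}_w,\mathcal{MC})$ of Proposition \ref{GV-M-w-cotorsion pair}. It suffices to show that $\Ext^2_R(M,C)$ is $\GV$-torsion for $M\in\mathcal{SF}_w$ and $C\in\mathcal{MC}$, and this follows from (4). For $(3)\Rightarrow(4)$, resolving together with that proposition gives $\Ext^2_R(M,C)$ $\GV$-torsion for all $C\in\mathcal{MC}$. To pass to an arbitrary module $X$:
\begin{itemize}
\item Embed $X$ in an injective module $I$, so that $\Ext^2_R(M,X)\cong\Ext^1_R(M,I/X)$ with $I/X$ $h$-divisible.
\item By Matlis (as used in Lemma \ref{w-sm}), $I/X$ is a quotient of a $Q$-vector space $E$ with kernel in $\mathcal{MC}$.
\item Then $\Ext^1_R(M,I/X)$ sits between $\Ext^1_R(M,E)$ and $\Ext^2_R(M,\text{kernel})$. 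The first is $\GV$-torsion since $E\in\mathcal{MC}$, and the second is $\GV$-torsion by the hereditary property.
\end{itemize}
This is the same argument as Lemma \ref{iwsd-leq-1}.

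\textbf{Step 5: $(2)\Rightarrow(1)$, the main obstacle.} Given that $\Ext^2_R(Q,X)$ is $\GV$-torsion for all $X$, we want it to vanish. I would first try to show that $\Ext^2_R(Q,X)$ is naturally a $Q$-module, since $Q$ acts on the first variable. As a $Q$-module it is $\GV$-torsion-free as an $R$-module: for $J\in\GV(R)$ we have $JQ=Q$, so no nonzero element is killed by $J$. Being both $\GV$-torsion and $\GV$-torsion-free, it is zero, exactly as in Proposition \ref{GV-E-cotorsion is M-cotorsion}. Hence $\pd_RQ\leqslant 1$, which is (1).
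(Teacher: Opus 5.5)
Your proof is correct after one small repair, and it reaches the key implication by a different route from the paper. The paper gets $(1)\Rightarrow(3)$ at once by quoting that $\mathcal{MC}$ is coresolving over a Matlis domain \cite[Lemma 4.4.13]{GT06} and feeding this into Propositions \ref{GV-M-w-cotorsion pair} and \ref{hereditary w-cotorsion pair}. It then proves $(3)\Rightarrow(4)$ by applying Lemma \ref{w-sm} to a syzygy of $M$, which is what your Step 4 does once Lemmas \ref{iwsd-leq-1} and \ref{w-sm} are unpacked.

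You instead prove $(2)\Rightarrow(4)$ directly from the structure theorem (Theorem \ref{characterization of  strongly w-flat}(2)). The key observation is that along a $w$-split epimorphism $g\colon N\to M$, multiplication by each $d_k$ on $\Ext^i_R(M,X)$ factors as $h_k^*g^*$ through $\Ext^i_R(N,X)$. You then recover (3) from (4) through the $\Ext$-criterion of Proposition \ref{hereditary w-cotorsion pair}. Your route avoids the external fact about $\mathcal{MC}$, and once repaired it even gives the uniform bound $J\Ext^2_R(M,X)=0$ for all $X$. The paper's route is shorter and obtains the hereditary property of $(\mathcal{SF}_w,\mathcal{MC})$ directly from its classical counterpart.

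The repair concerns the first bullet of Step 2. The inference ``$\wsd_RE\leqslant1$ from (2), because $E$ is a direct sum of copies of $Q$'' is not valid as it stands. We have $\Ext^2_R(E,X)\cong\prod\Ext^2_R(Q,X)$, and an infinite product of $\GV$-torsion modules need not be $\GV$-torsion. The paper's own example shows this: over a two-dimensional regular local ring each $R/J$ with $J\in\GV(R)$ is $w$-split, yet $\bigoplus_{J\in\GV(R)}R/J$ is not.

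Your Step 5 supplies the fix: $\Ext^2_R(Q,X)$ is a $Q$-module, hence actually zero. So prove $(2)\Rightarrow(1)$ first. Then $\pd_RE\leqslant1$, hence $\pd_RN\leqslant1$ and $\Ext^2_R(N,X)=0$, and your factorization argument goes through verbatim.
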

\begin{proof}
	$(1)\Rightarrow (2)$ This is clear.
	
	$(2)\Rightarrow (1)$ Suppose that $\wsd_RQ\leqslant 1$. Then for each $R$-module $N$, we have that $\Ext^2_R(Q,N)$ is a $\GV$-torsion $R$-module. But note that the $Q$-module $\Ext^2_R(Q,N)$, as an $R$-module, is $\GV$-torsion-free. Thus $\Ext^2_R(Q,N)=0$, and consequently $\pd_RQ\leqslant 1$, i.e., $R$ is a Matlis domain.
	
	$(1)\Rightarrow (3)$ Assume that $R$ is a Matlis domain. Then by \cite[Lemma 4.4.13]{GT06}, $\mathcal{MC}$ is coresolving. Hence, it follows from Propositions \ref{GV-M-w-cotorsion pair} and \ref{hereditary w-cotorsion pair} that $\mathcal{SF}_w$ is resolving.
	
	$(3)\Rightarrow (4)$ Suppose that the class $\mathcal{SF}_w$ is resolving, and that $M$ is a $w$-strongly flat $R$-module. Consider the exact sequence of $R$-modules $0\rightarrow F\rightarrow P\rightarrow M\rightarrow 0$, where $P$ is projective. Then by the hypothesis, $F$ is $w$-strongly flat, and so, by Lemma \ref{w-sm}, $F$ is $w$-split. Thus, $\wsd_RM\leqslant 1$.
	
	$(4)\Rightarrow (5)$ and $(5)\Rightarrow (2)$ are obvious.
\end{proof}

It is known from \cite[Theorem 4.5]{BS02} that a domain $R$ satisfies $\mathcal{SF}=\mathcal{FL}$ if and only if $\mathcal{MC}=\mathcal{EC}$, if and only if $R$ is almost perfect. Similarly, we have the following characterization of the domains satisfying $\mathcal{SF}_w=\mathcal{FL}_{w}$.

\begin{theorem}\label{M-cotorsion=GV-E-cotorsion}
	The following statements are equivalent for a domain $R$.
	\begin{enumerate}
		\item $\mathcal{MC}=\mathcal{EC}_{\GV}$.
		\item $\mathcal{SF}_w=\mathcal{FL}_{w}$.
		\item $R$ is almost perfect.
	\end{enumerate}
\end{theorem}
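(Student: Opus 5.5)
The plan is to prove $(1)\Leftrightarrow(2)$ formally using the two $w$-cotorsion pairs already established. After that, $(2)\Leftrightarrow(3)$ is obtained by comparing with the classical statement of Bazzoni and Salce (\cite[Theorem 4.5]{BS02}): over a domain, $\mathcal{SF}=\mathcal{FL}$ if and only if $R$ is almost perfect.

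For $(1)\Leftrightarrow(2)$, Proposition \ref{GV-E-w-cotorsion pair} shows that $(\mathcal{FL}_w,\mathcal{EC}_{\GV})$ is a $w$-cotorsion pair. Proposition \ref{GV-M-w-cotorsion pair} shows that $(\mathcal{SF}_w,\mathcal{MC})$ is a $w$-cotorsion pair. Both halves of a $w$-cotorsion pair determine each other via $^{\bot_w}(-)$ and $(-)^{\bot_w}$. Hence $\mathcal{MC}=\mathcal{EC}_{\GV}$ gives $\mathcal{SF}_w={}^{\bot_w}\mathcal{MC}={}^{\bot_w}\mathcal{EC}_{\GV}=\mathcal{FL}_w$. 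Conversely, $\mathcal{SF}_w=\mathcal{FL}_w$ gives $\mathcal{MC}=\mathcal{SF}_w^{\bot_w}=\mathcal{FL}_w^{\bot_w}=\mathcal{EC}_{\GV}$.

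For $(3)\Rightarrow(2)$, assume $R$ is almost perfect, so $\mathcal{FL}=\mathcal{SF}$ and hence $\mathcal{MC}=\mathcal{EC}$. By Proposition \ref{strongly w-flat is w-flat} it suffices to show $\mathcal{FL}_w\subseteq\mathcal{SF}_w$. Take $M\in\mathcal{FL}_w$ and $C\in\mathcal{MC}=\mathcal{EC}$; we want $\Ext^1_R(M,C)$ to be $\GV$-torsion.

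The cleanest route is to reduce to DW. Every almost perfect domain has Krull dimension at most one: if $0\neq\p\subsetneq\mathfrak{q}$ were a chain of primes, then $R/\p$ would be a perfect domain that is not a field, which is impossible. One-dimensional domains are treed, hence DW-domains. Then $\mathcal{FL}_w=\mathcal{FL}=\mathcal{SF}$ and $\mathcal{SF}_w=\mathcal{SF}$ by Proposition \ref{strongly w-flat=strongly flat}, and (2) follows. If one wants to avoid citing "treed implies DW", one can argue directly instead: a nonzero proper $\GV$-ideal $J$ would make $R/J$ perfect, hence zero-dimensional. Every prime over $J$ is then maximal, and in dimension one these are height-one primes, which are $w$-ideals and so cannot contain a $\GV$-ideal. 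This is a contradiction, so $\GV(R)=\{R\}$.

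For $(2)\Rightarrow(3)$, assume $\mathcal{SF}_w=\mathcal{FL}_w$. Let $F$ be a flat module. Then $F\in\mathcal{FL}_w=\mathcal{SF}_w$, so by Proposition \ref{localization of strongly $w$-flat} each $F_{\p}$ is strongly flat over $R_{\p}$ for every prime $w$-ideal $\p$. Since every flat $R_{\p}$-module arises as such an $F_{\p}$, each $R_{\p}$ is almost perfect.

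This is the main obstacle. The goal is to show that $R$ itself is DW, after which Proposition \ref{strongly w-flat=strongly flat} gives $\mathcal{SF}=\mathcal{SF}_w=\mathcal{FL}_w\supseteq\mathcal{FL}$, and \cite[Theorem 4.5]{BS02} then gives almost perfectness. I would first try the following. Take $J\in\GV(R)$ and consider the $\GV$-torsion module $M=\bigoplus_{J}R/J$, which lies in $\mathcal{FL}_w=\mathcal{SF}_w$. Proposition \ref{GV-torsion strongly w-flat} then yields a single $J_0\in\GV(R)$ with $J_0M=0$, so $J_0\subseteq J$ for every $J\in\GV(R)$. The idempotence argument from Theorem \ref{perfect ring} then gives $J_0=R$, hence $\GV(R)=\{R\}$.
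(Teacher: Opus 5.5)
Your proof is correct and essentially the paper's. You get $(1)\Leftrightarrow(2)$ by comparing the $w$-cotorsion pairs $(\mathcal{FL}_w,\mathcal{EC}_{\GV})$ and $(\mathcal{SF}_w,\mathcal{MC})$, $(2)\Rightarrow(3)$ by forcing $R$ to be DW via the $\GV$-torsion module $\bigoplus\{R/J~|~J\in\GV(R)\}$, Proposition \ref{GV-torsion strongly w-flat} and the idempotence trick before invoking Bazzoni--Salce, and the converse via $\dim R\leqslant 1\Rightarrow$ DW. You prove the dimension bound directly where the paper cites it, and the preliminary localization paragraph in $(2)\Rightarrow(3)$ is correct but plays no role, so it can be dropped.
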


\begin{proof}
	$(1)\Rightarrow (2)$ If $\mathcal{MC}=\mathcal{EC}_{\GV}$, then Proposition \ref{GV-E-w-cotorsion pair} gives
	\[\mathcal{SF}_w=\rule{0pt}{10pt}^{\bot_w}\mathcal{MC}=\rule{0pt}{10pt}^{\bot_w}\mathcal{EC}_{\GV}=\mathcal{FL}_{w}.\]
	
	$(2)\Rightarrow (3)$ In view of Propositions \ref{w-E-cotorsion=E-cotorsion}, \ref{strongly w-flat=strongly flat} and \cite[Theorem 4.5]{BS02} it suffices to show that $R$ is a DW-domain. To do so, set
	\[M=\bigoplus\{R/J~|~J\in\GV(R)\}.\]
	Then $M$ is a $\GV$-torsion $R$-module, and hence it is $w$-flat over $R$. By (2), $M$ is $w$-strongly flat. Therefore, it follows from Proposition \ref{GV-torsion strongly w-flat} that $J_0M=0$ for some $J_0\in\GV(R)$. So $J_0\subseteq J$ for all $J\in\GV(R)$. Now, the rest of the proof is the same as in $(3)\Rightarrow (4)$ of the proof of Theorem \ref{perfect ring}.
	
	$(3)\Rightarrow (1)$ Let $R$ be an almost perfect domain. Then $\dim(R)=1$ by \cite[Theorem 1.4]{BS03}. Thus, $R$ is a DW-domain, and so $\mathcal{EC}_{\GV}=\mathcal{EC}$. Hence, (1) follows from \cite[Theorem 4.5]{BS02}.
\end{proof}

The final result characterizes the domains over which the classes of $\mathcal{MC}$ and $\mathcal{WC}_{\GV}$ coincide.

\begin{theorem}\label{dedekind domain}
	The following statements are equivalent for a domain $R$.
	\begin{enumerate}
		\item $\mathcal{MC}=\mathcal{WC}_{\GV}$.
		\item $\mathcal{TF}_{w}=\mathcal{SF}_w$.
		\item $R$ is a Dedekind domain.
	\end{enumerate}
\end{theorem}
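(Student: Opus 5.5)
The cycle I plan is $(1)\Rightarrow(2)\Rightarrow(3)\Rightarrow(1)$, modelled on the proofs of Theorems \ref{M-cotorsion=GV-E-cotorsion} and \ref{strongly GV-W-cotorsion=strongly GV-E-cotorsion}.

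For $(1)\Rightarrow(2)$, the first step is to check that $\mathcal{WC}_{\GV}$ is the right-hand class of a $w$-cotorsion pair with left class $\mathcal{TF}_w$; this is Proposition \ref{GV-W-w-cotorsion pair}. The second step uses Proposition \ref{GV-M-w-cotorsion pair}: over a domain, $(\mathcal{SF}_w,\mathcal{MC})$ is a $w$-cotorsion pair. Hence, if $\mathcal{MC}=\mathcal{WC}_{\GV}$, then
\[\mathcal{SF}_w=\rule{0pt}{10pt}^{\bot_w}\mathcal{MC}=\rule{0pt}{10pt}^{\bot_w}\mathcal{WC}_{\GV}=\mathcal{TF}_w.\]
The same computation run in the other direction, using ${\mathcal{SF}_w}^{\bot_w}=\mathcal{MC}$ and ${\mathcal{TF}_w}^{\bot_w}=\mathcal{WC}_{\GV}$, gives $(2)\Rightarrow(1)$ for free.

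For $(2)\Rightarrow(3)$, the inclusions over a domain are $\mathcal{SF}_w\subseteq\mathcal{FL}_w\subseteq\mathcal{TF}_w$ (Proposition \ref{strongly w-flat is w-flat}). So $\mathcal{TF}_w=\mathcal{SF}_w$ forces both $\mathcal{TF}_w=\mathcal{FL}_w$ and $\mathcal{SF}_w=\mathcal{FL}_w$.
\begin{enumerate}
\item By Proposition \ref{w-torsion-free=w-flat}, the first equality makes $R$ a P$v$MD.
\item By Theorem \ref{M-cotorsion=GV-E-cotorsion}, the second makes $R$ almost perfect. In particular $R$ is a DW-domain of dimension at most one, by the argument in that theorem together with \cite[Theorem 1.4]{BS03}.
\item Over a DW-domain $\GV(R)=\{R\}$, so every maximal ideal is a maximal $w$-ideal. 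A P$v$MD that is DW is therefore Pr\"ufer, since each $R_{\mathfrak m}$ is a valuation domain.
\item An almost perfect domain is Noetherian once it is integrally closed/Prüfer: a one-dimensional Prüfer domain in which every $R/I$ ($I\ne0$) is perfect has each $R_{\mathfrak m}$ a valuation domain with $\mathfrak m$ not idempotent, hence a DVR, and it has finite character. Alternatively, I would simply cite \cite{BS02}: an almost perfect Prüfer domain is Dedekind, because for a Prüfer domain $\mathcal{FL}=\mathcal{TF}$, so $\mathcal{SF}=\mathcal{TF}$, and Bazzoni–Salce show this characterizes Dedekind domains.
\end{enumerate}
This last point is the step I expect to need the most care, so I will lean on the classical fact that $\mathcal{MC}=\mathcal{WC}$ over a domain iff $R$ is Dedekind, as quoted in the introduction from \cite[Theorems 4.4.9, 4.4.10]{GT06}. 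Concretely: since $R$ is DW, we have $\mathcal{TF}_w=\mathcal{TF}$ and $\mathcal{SF}_w=\mathcal{SF}$ (Propositions \ref{w-W-cotrosion=W-cotorsion} and \ref{strongly w-flat=strongly flat}). So (2) becomes $\mathcal{TF}=\mathcal{SF}$, whence $\mathcal{WC}=\mathcal{MC}$, and $R$ is Dedekind.

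For $(3)\Rightarrow(1)$, a Dedekind domain is one-dimensional, hence a DW-domain. Then $\mathcal{TF}_w=\mathcal{TF}$, and the $w$-cotorsion pairs coincide with ordinary cotorsion pairs (Proposition \ref{the relation between w-cotorsion pair and cotorsion pair}). Thus $\mathcal{WC}_{\GV}=\mathcal{TF}^{\bot}=\mathcal{WC}$, and the classical equality $\mathcal{WC}=\mathcal{MC}$ for Dedekind domains finishes the proof.
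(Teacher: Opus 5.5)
Your proof is correct and essentially the paper's. The $(1)\Leftrightarrow(2)$ orthogonality computations are identical. Your passage to Dedekind is also the same: Theorem \ref{M-cotorsion=GV-E-cotorsion} gives almost perfect, hence DW, so the condition reduces to its classical form and \cite[Theorem 4.4.9]{GT06} applies. The only difference is the starting point: you obtain $\mathcal{SF}_w=\mathcal{FL}_w$ from (2), where the paper obtains $\mathcal{MC}=\mathcal{EC}_{\GV}$ from (1). The P$v$MD/Pr\"ufer detour in your list is unnecessary once you use the DW reduction.
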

\begin{proof}
	$(1)\Rightarrow (2)$ Let $\mathcal{MC}=\mathcal{WC}_{\GV}$. Then by Proposition \ref{GV-W-w-cotorsion pair}, we have
	\[\mathcal{TF}_{w}=\rule{0pt}{10pt}^{\bot_w}\mathcal{WC}_{\GV}=\rule{0pt}{10pt}^{\bot_w}\mathcal{MC}=\mathcal{SF}_w.\]
	
	$(2)\Rightarrow (1)$ If $\mathcal{TF}_{w}=\mathcal{SF}_w$, then Proposition \ref{GV-M-w-cotorsion pair} yields
	\[\mathcal{MC}={\mathcal{SF}_w}^{\bot_{w}}={\mathcal{TF}_w}^{\bot_{w}}=\mathcal{WC}_{\GV}.\]
	
	$(1)\Rightarrow (3)$ Assume that $\mathcal{MC}=\mathcal{WC}_{\GV}$. Then $\mathcal{MC}=\mathcal{EC}_{\GV}$. Thus, by Theorem \ref{M-cotorsion=GV-E-cotorsion}, $R$ is almost perfect. In particular, $R$ is a DW-domain, which means that $\mathcal{MC}=\mathcal{WC}$. Hence, it follows form \cite[Theorem 4.4.9]{GT06} that $R$ is a Dedekind domain.
	
	$(3)\Rightarrow (1)$ Suppose that $R$ is a Dedekind domain. Then $R$ is a DW-domain, and hence $\mathcal{WC}_{\GV}=\mathcal{WC}$. Thus, (1) holds by \cite[Theorem 4.4.9]{GT06}.
\end{proof}

\end{document}